\documentclass[a4paper]{scrartcl}
\usepackage[left=2cm,right=2cm]{geometry}
\usepackage{amsthm}
\usepackage[dvipsnames]{xcolor}
\usepackage{graphicx}
\usepackage{epstopdf}
\usepackage{tensor}
\usepackage{colonequals}
\usepackage{diagbox}
\usepackage{subcaption}
\usepackage{verbatim}
\usepackage{mathtools}
\usepackage[authoryear,round]{natbib}
\usepackage{appendix}
\usepackage{aliascnt}
\usepackage{hyperref}
\usepackage[capitalise,nameinlink,noabbrev]{cleveref}
\hypersetup{
    colorlinks,
    allcolors={green!50!black},
    urlcolor={red!50!black}
}

\newcommand{\figurealt}[1]{}

\theoremstyle{plain}
\newtheorem{theorem}{Theorem}[section]
\newaliascnt{lemma}{theorem}
\newtheorem{lemma}[lemma]{Lemma}
\aliascntresetthe{lemma}
\newaliascnt{corollary}{theorem}
\newtheorem{corollary}[corollary]{Corollary}
\aliascntresetthe{corollary}
\newaliascnt{remark}{theorem}
\newtheorem{remark}[remark]{Remark}
\aliascntresetthe{remark}
\newaliascnt{example}{theorem}
\aliascntresetthe{example}

\theoremstyle{plain}

\newtheorem{assumption}{Assumption}
\newaliascnt{definition}{theorem}
\newtheorem{definition}[definition]{Definition}
\aliascntresetthe{definition}

\ifpdf
  \DeclareGraphicsExtensions{.eps,.pdf,.png,.jpg}
\else
  \DeclareGraphicsExtensions{.eps}
\fi

\newcounter{subeq}

\usepackage{amsmath,amsfonts,amssymb,amsopn}
\usepackage{bm}
\usepackage{stmaryrd}
\usepackage{xargs}
\usepackage{xifthen}
\usepackage{todonotes}

\def\R{\mathbb{R}}

\DeclareMathOperator{\diam}{diam}
\DeclareMathOperator{\Id}{Id}
\DeclareMathOperator{\tr}{tr}

\DeclareMathSymbol{\shortminus}{\mathbin}{AMSa}{"39}

\newcommand{\OO}{\mathcal{O}}

\newcommand{\order}{{k}}  
\newcommand{\kg}{{m}} 
\newcommand{\ku}{{r}} 
\newcommand{\kd}{{b}} 

\newcommand{\M}{\Omega}

\newcommand{\G}{\Gamma}
\newcommand{\Gh}{\Gamma^\kg_{h}}
\newcommand{\GBar}{\overline{\G}}

\newcommand{\GM}{\G_{\M}}
\newcommand{\GhM}{\G^\kg_{h,\M}}

\newcommand{\Tri}{\mathcal{T}}
\newcommand{\TriBar}{\overline{\Tri}}

\newcommand{\TriRef}{\TriBar_{h_0}}

\newcommand{\T}{T}

\newcommand{\TRef}{T_\mathrm{ref}}
\newcommand{\TriM}{\TriBar}

\newcommand{\symT}{\mathbb{T}}

\newcommand{\dist}{\rho_\G}

\newcommand{\nB}{\bm{n}}
\newcommand{\nn}{\nB}
\newcommand{\nnh}{\nB^\kg_{h}}
\newcommand{\nM}{\bar{\nB}}
\newcommand{\nhM}{\bar{\nB}_{h}}

\newcommand{\eB}{\bm{e}}
\newcommand{\eM}{\bar{\eB}}
\newcommand{\tauB}{\bm{\tau}}
\newcommand{\tauM}{\bar{\tauB}}

\newcommand{\dG}{\mathrm{d}\sigma}
\newcommand{\dGh}{\mathrm{d}\sigma_h}
\newcommand{\dM}{\mathrm{d}\bar{\sigma}}
\newcommand{\dt}{\mathrm{d}t}
\newcommand{\mG}{\mu}
\newcommand{\mGh}{\mu^\kg_{h}}

\newcommand{\PhiM}{\Phi_{\M}}
\newcommand{\Phih}{\Phi^\kg_{h}}
\newcommand{\PhihM}{\Phi^\kg_{h,\M}}

\newcommand{\xG}{x}
\newcommand{\xGh}{\hat{x}}

\newcommand{\xM}{\bar{x}}
\newcommand{\xBar}{\bar{x}}
\newcommand{\xRef}{\xi}

\newcommand{\phiBar}{\bar{\phi}}

\newcommand{\PP}{\bm{P}}

\newcommand{\PPh}{\PP_{\!h}}

\newcommand{\Weingarten}{\bm{H}}
\newcommand{\Weingartenh}{\Weingarten_h}
\newcommand{\MeanCurvature}{H}
\newcommand{\MeanCurvatureh}{\MeanCurvature_h}
\newcommand{\GaussCurvature}{K}
\newcommand{\GaussCurvatureh}{\GaussCurvature_h}

\newcommand{\D}{D}

\newcommand{\DM}{\bar{\D}}

\newcommand{\grad}{\nabla}
\newcommand{\gradG}{\grad_{\G}}
\newcommand{\gradGh}{\grad_{\Gh}}
\newcommand{\gradM}{\overline{\grad}}

\newcommand{\Inner}[3]{\big({#1} \,,\, {#2}\big)_{#3}}

\newcommand{\Norm}[2]{\lVert{#1}\rVert_{#2}}

\newcommand{\EnergyNorm}[2]{{\lvert\kern-0.25ex\lvert\kern-0.25ex\lvert{#1}\rvert\kern-0.25ex\rvert\kern-0.25ex\rvert}_{#2}}

\newcommand{\restr}[2]{\left.{#1}\right|_{#2}}

\newcommandx{\SobolevSpace}[3][1=s,2=p]%
  {\bm{W}^{#1,#2}({#3})}
\newcommandx{\SobolevSpaceTan}[3][1=s,2=p]%
  {\bm{W}_\mathrm{tan}^{#1,#2}({#3})}
\newcommandx{\SobolevSpaceAmb}[3][1=s,2=p]%
  {\bm{W}^{#1,#2}({#3})}
\newcommandx{\SobolevNormTan}[4][1=s,2=p]%
  {\Norm{#3}{\SobolevSpaceTan[#1][#2]{#4}}}
\newcommandx{\SobolevNormAmb}[4][1=s,2=p]%
  {\Norm{#3}{\SobolevSpaceAmb[#1][#2]{#4}}}
\newcommandx{\SobolevNorm}[4][1=s,2=p]%
  {\Norm{#3}{\SobolevSpace[#1][#2]{#4}}}

\newcommandx{\HSpace}[2][1={1}]%
  {\bm{H}^{#1}({#2})}
\newcommandx{\HSpaceTan}[2][1={1}]%
  {\bm{H}_\mathrm{tan}^{#1}({#2})}
\newcommandx{\HSpaceAmb}[3][1={1},2={3}]%
  {[H^{#1}({#3})]^{#2}}
\newcommandx{\HNormTan}[3][1={1}]%
  {\Norm{#2}{\HSpaceTan[#1]{#3}}}
\newcommandx{\HNormAmb}[3][1={1}]%
  {\Norm{#2}{\HSpaceAmb[#1]{#3}}}
\newcommandx{\HNorm}[3][1={1}]%
  {\Norm{#2}{\HSpace[#1]{#3}}}

\newcommandx{\LSpace}[3][1={2},2={}]%
  {\bm{L}^{#1}_{#2}(#3)}
\newcommandx{\LSpaceTan}[2][1={2}]%
  {\bm{L}_\mathrm{tan}^{#1}(#2)}
\newcommandx{\LSpaceAmb}[3][1={2},2={3}]%
  {[L^{#1}(#3)]^{#2}}
\newcommandx{\LNormTan}[3][1={2}]%
  {\Norm{#2}{\LSpaceTan[#1]{#3}}}
\newcommandx{\LNormAmb}[3][1={2}]%
  {\Norm{#2}{\LSpaceAmb[#1]{#3}}}
\newcommandx{\LNorm}[3][1={2}]%
  {\Norm{#2}{\LSpace[#1]{#3}}}
\newcommandx{\LTwoNorm}[3][3={}]%
  {\Norm{#1}{#2}}

\newcommandx{\LSpaceAvg}[3][1={2},2={}]%
{L^{#1}_{0{#2}}(#3)}

\newcommandx{\Vh}[1][1=h]{\bm{V}_{\!\!{#1}}}

\newcommand{\NormZero}[2][]{\Norm{#2}{0,h\ifthenelse{\isempty{#1}}{}{,#1}}}
\newcommand{\NormOne}[2][]{\Norm{#2}{1,h\ifthenelse{\isempty{#1}}{}{,#1}}}

\newcommand{\GNorm}[2][]{\Norm{#2}{\star,h\ifthenelse{\isempty{#1}}{}{,#1}}}
\newcommand{\ahNorm}[2][]{\HNormTan[1]{\PPh{#2}}{\ifthenelse{\isempty{#1}}{\Gh}{#1}}}
\newcommand{\AhNorm}[2][]{\EnergyNorm{#2}{A_h\ifthenelse{\isempty{#1}}{}{,#1}}}

\newcommand{\I}{\mathrm{I}}
\newcommand{\IBar}{\overline{\I}}

\newcommand{\IRef}{\I_\mathrm{ref}}

\newcommand{\II}{\mathbb{I}}

\newcommandx{\PDQ}[4][1={\Weingarten},2={\PP},3={n}]{\sum_{r=1}^{k+l}{#1}\bigotimes_{r,l+1}{#2}\big[{#4}\underset{r}{\cdot}{#3}\big]}

\newcommand{\myTitle}{Odd behaviour of even geometries}
\newcommand{\mySubtitle}{An explanation for superconvergent geometric consistency errors}

\newcommand{\myAbstract}{%
Piecewise polynomial surface approximations used in surface finite element methods often seem to behave better than their standard approximation properties suggest if their polynomial order is even. We explain this superconvergence through cancellation of leading interpolation errors on suitably structured meshes that naturally arise in some refinement processes. This cancellation improves weighted integral estimates for functions, derivatives, and geometric quantities. Applications include estimates for surface normals, the Weingarten map, and Gaussian curvature. Numerical experiments reproduce the predicted parity-dependent behaviour and support the proposed explanation of superconvergent geometric consistency errors, while the corresponding pointwise errors retain their standard orders.%
}
\newcommand{\myKeywords}{%
surface finite element methods; higher order surface approximation; superconvergence; geometric consistency errors; symmetric triangulations; curvature.%
}
\numberwithin{equation}{section}
\begin{document}

\title{\myTitle}
\subtitle{\mySubtitle}
\date{}

\author{%
Hanne Hardering%
  \thanks{Technische Universit{\"a}t Dresden, %
  Institute of Numerical Mathematics, %
  01062 Dresden, Germany %
  (\url{hanne.hardering@tu-dresden.de}).}
~~and~~
Simon Praetorius%
  \thanks{Technische Universit{\"a}t Dresden, %
  Institute of Scientific Computing, %
  01062 Dresden, Germany %
  (\url{simon.praetorius@tu-dresden.de}). Corresponding author.}
~~and~~
Gentian Zavalani%
  \thanks{Technische Universit{\"a}t Dresden, %
  Institute of Numerical Mathematics, %
  01062 Dresden, Germany %
  (\url{gentian.zavalani@tu-dresden.de}).}%
}

\maketitle

\paragraph*{Abstract.}
\myAbstract

\paragraph*{Keywords.}
\myKeywords

\section{Introduction}

Higher order surface finite element methods approximate a smooth surface by a
piecewise polynomial parametrization and then solve the discrete problem on this
approximate geometry.  The standard geometric estimates for such
parametrizations are well understood and usually predict $h^{\kg+1}$ for the
position error, $h^\kg$ for the error in the normal vector, and then one order less for each
derivative, with $\kg$ the polynomial order of the parametrization \citep[see, for example,][]{Demlow2009Higher,Kovacs2017High}.
In computations, however, this is not always the whole story.  For several
surface finite element discretizations one observes convergence orders that are
better than the standard analysis suggests, especially when even-order geometry
approximations or even polynomial degrees are used
\citep[see, for example,][]{BonitoEtAl2018Eigenvalues,HP2024Parametric}.  Related superconvergence
phenomena on special meshes are also known in other finite element settings
\citep{BankXu2003AsymptoticallyExact}.

The aim of this paper is to explain a class of these improvements.  The
essential point is not only the polynomial degree, but also the local structure
of the refined triangulation.  On symmetric pairs of elements, i.e., grid elements
that are mirror images of each other, the leading contributions in certain interpolation
and geometry errors cancel.  This produces the characteristic odd-even pattern: even
approximation orders gain an extra order in weighted integral quantities, while
odd orders do not.  Results of this type were proved in the context of surface
collocation and surface integration errors in
\citet{Chien1993Piecewise,AtkinsonChien1995Piecewise,Chien1995Numerical,zavalani2024note}.
We extend this idea to interpolation errors for functions and their derivatives, and
then use these estimates to improve bounds for geometric quantities such as the normal
vector, the Weingarten map, and curvature.

Our analysis is formulated for grid sequences obtained from an initial
macro-triangulation by structured refinement.  The main examples are red
refinement and related refinement patterns that preserve a symmetric
pair-structure on each macro element.  The initial grid itself is only required
to satisfy the usual finite element assumptions, such as shape regularity.  The additional
symmetry enters on the refined macro-elements and is used only for the improved
estimates.  This setup is very common in convergence studies of surface finite element
discretization.  The numerical experiments also indicate that some of these
structural requirements may be relaxed, for instance for certain
newest-vertex-bisection refinements \citep{Rivara1984Mesh}, but the proofs in
this paper use the explicit symmetric structure.

A second point of emphasis is the choice of lifting.  We primarily work with
the lifting induced by the surface parametrization, in the spirit of
\citet{Nedelec1976Curved}, and distinguish it from the closest-point lifting
used, for example, by \citet{Hei2004Isoparametric,Demlow2009Higher}.  This distinction matters when
derivatives and geometry-dependent quantities are compared on the smooth and
discrete surfaces.  We show that the improved interpolation estimates already
hold for the parametrization lifting, and we also record how the
closest-point projection enters some of the geometric applications.

The main technical contribution is an interpolation theory for weighted
integrals on symmetric macro-elements and its transfer to curved surfaces.  For
function values and for first derivatives with sufficiently regular weights,
the cancellation yields the same improved order.  The higher-order derivative
case is more technical and less common in finite-element computations, so the
corresponding notation, estimates, and numerical verification are collected in
the appendix.  There the improvement depends on the interplay between the
geometry order $\kg$, the function interpolation order $\ku$, and the derivative
order $r$.  This derivative-order dependence is one of the features that is not
visible in the classical surface approximation estimates.

The improved estimates for normal vectors and curvature are relevant whenever
geometric consistency terms enter a surface finite element error analysis.  This
occurs, for example, in vector- and tensor-valued surface PDEs, surface Stokes
discretizations, and problems involving curvature terms
\citep{HLL2020Analysis,HP2023Tangential,HP2024Parametric,HLZ2015Stabilized,Cenanovic2017MeanCurvature}.  The
improvement is not automatically sufficient to improve the convergence order of
every full discretization: in geometric evolution problems, for instance,
separate evolution equations for normal vectors or curvature can still be
essential for optimal schemes \citep{Kovacs2019Convergent}.  Nevertheless, the
estimates identify which geometric error terms are genuinely smaller on
structured even geometries.

The paper is organized as follows.  In \cref{sec:discrete surface} we introduce
the surface parametrizations, discrete surfaces, liftings, and mesh assumptions,
including the symmetric macro-triangulations used later.  In
\cref{sec:interpolation_errors} we prove the improved flat macro-element
estimates and lift them to curved surfaces.  In \cref{sec:application} we apply
the interpolation theory to geometric quantities.  Finally,
\cref{sec:numerical-experiments} verifies the predicted orders numerically and
documents additional observations about the role of the refinement structure.
The higher-order lifted derivative interpolation estimates and their numerical
tests are given in \cref{sec:appendix-higher-order-derivatives} and some additional
numerical tests in \cref{sec:additional-numerical-experiments} show observations
beyond the proved results.

\section{Surface and discrete surface approximation}\label{sec:discrete surface}

In this section we collect the geometric notation used throughout the paper.
Let $\kg\geq 1$ denote the polynomial order used for the geometry
approximation.
We assume that $\G$ is a connected, oriented, closed, two-dimensional
$C^{\kg+2}$ hypersurface in $\R^3$.

\subsection{Surface parametrization and discrete geometry}\label{sec:surface-parametrization}
Let $\GBar$ be a fixed two-dimensional reference surface given by a conforming triangulation $\TriBar=\TriBar_{h_0}$, called macro grid, consisting of finitely many nondegenerate planar
triangles, called macro elements.

On each macro element $\M$, we use its flat affine structure.
After choosing an orthonormal basis
$\{\eM_j\}_{j=1}^2$ of its tangent plane, we define, for a vector-valued map
$\bm{f}:\M\to\R^3$,
\[
  \DM\bm{f}(\xM) \colonequals (\partial_j f_i)_{ij}(\xM) \in\R^{3\times2}.
\]
Flat Sobolev norms on $\M$ and on elements $\T\subset\M$ are defined with
respect to these derivatives.
Since the macro grid is fixed and finite, all choices of orthonormal bases are
equivalent.
Whenever the inverse of a rectangular Jacobian is used, it is understood in
the Moore--Penrose sense and denoted by $(\DM\bm{f})^+$.

\begin{assumption}[Admissible macro parametrization]\label{ass:admissible-macro-parametrization}
For every macro element $\M\in\TriBar$, there exists a map
$\PhiM\in C^{\kg+2}(\M;\R^3)$ whose restriction to the relative interior of
$\M$ is an embedding. We define the corresponding surface patch by
$\GM\colonequals\PhiM(\M)\subset\G$.
The patch images form a conforming partition of $\G$.
Whenever two macro elements share an edge, the corresponding
parametrizations coincide on that edge, so that the local maps glue to a
continuous map $\Phi:\GBar\to\G$, $\Phi|_{\M}=\PhiM$.
We assume that $\Phi$ is a homeomorphism from $\GBar$ onto $\G$.
The parametrizations are uniformly regular: for every $\M\in\TriBar$,
we assume that $\DM\PhiM$ has full column rank and bounded smallest singular values.
\end{assumption}

We assume this parametrization throughout the paper. We refer to
\cref{ass:admissible-macro-parametrization} explicitly only when a particular
property, such as uniform regularity or compatibility across macro-element
interfaces, is used in the argument.

\begin{assumption}[Admissible triangulation]\label{ass:admissible-triangulation}
For every $h\le h_0$, let $\TriBar_h$ denote a conforming refinement of the
macro grid, and let $\GBar_h$ be the reference surface equipped with this
refined triangulation. As sets the domains coincide, i.e., $\GBar_h=\GBar$.
For every macro element $\M\in\TriBar$, we write
$\TriBar_{h,\M}\subset\TriBar_h$ for the induced triangulation of $\M$.
We assume that the refined triangulations are uniformly shape regular and
quasi-uniform with grid size $h=\max_{\T\in\TriBar_h}\diam(\T)$.
\end{assumption}

Throughout the paper, generic constants may depend on
the uniform regularity bounds of the parametrizations, and the fixed macro grid, but are independent of the refinement level~$h$.
Further assumptions on the refined triangulations will be stated separately
when they are needed.

On each element $\T\in\TriBar_{h,\M}$ let $F_{\T}\colon\TRef\to\T$ be the affine map from the reference triangle.
The degree-$\kg$ geometry approximation is the elementwise Lagrange interpolation of the smooth patch map:
\begin{equation}\label{eq:element-wise-parametrization}
  (\Phi^\kg_{\T}\circ F_{\T})(\xRef)
  \colonequals \IRef^{\kg}(\PhiM\circ F_{\T})(\xRef)
  \colonequals \sum_{i=1}^{n_{\kg}}\PhiM\big(F_{\T}(\xRef^\kg_{i})\big)\phi^\kg_{i}(\xRef),
  \qquad \xRef\in \TRef .
\end{equation}
Here $\{\xRef^\kg_i\}_{i=1}^{n_\kg}$ are the degree-$\kg$ Lagrange nodes on $\TRef$ with basis functions $\phi_i^\kg$.
We define $\PhihM|_{\T}\colonequals\Phi^\kg_{\T}$ and $\GhM \colonequals \PhihM(\M)$.
Since the exact macro maps agree on common macro edges and the same Lagrange interpolation is used from both sides, the maps $\PhihM$ glue to a continuous global piecewise polynomial map $\Phih\in C^0(\GBar,\R^3)$ with $\Phih|_{\M}=\PhihM$ and
\[
  \Gh\colonequals\Phih(\GBar)=\bigcup_{\M\in\TriBar}\GhM .
\]

Coordinates in the domain $\M$ are denoted by $\xM$, coordinates in $\G$ by $\xG$, and coordinates in $\Gh$ by $\xGh$.
If the active macro element is clear from context, we write $\Phi$ and $\Phih$ instead of $\PhiM$ and $\PhihM$.

Let $U_\delta\subset\R^3$ be a $\delta$-neighbourhood of $\G$ such that the
signed distance function $\dist$ is of class $C^{\kg+2}$ in $U_\delta$ and the
closest-point projection $\pi:U_\delta\to\G$ is unique. See
\citet[Lemma 2.8]{DE2013Finite}. It is given by
\begin{equation}\label{eq:closest-point-projection}
  \pi(x) \colonequals x - \dist(x)\nn(\pi(x)),
\end{equation}
with $\dist(x)<0$ for $x$ in the interior of $\G$ and $\nn\colonequals\D\dist$ the outer surface normal
field. The latter is extended constant in normal direction, $\nn(x)\doteq\nn(\pi(x))$.

\begin{lemma}[Regularity of the interpolated geometry]\label{lem:regularity-interpolated-geometry}
  Under \cref{ass:admissible-macro-parametrization}, for $h$ sufficiently small each $\PhihM$ is elementwise regular and is a homeomorphism from $\M$ onto $\GhM$.
  Moreover, $\DM\PhihM$ has full column rank on each element, with rank constants independent of $h$.
  The global map $\Phih\colon\GBar\to\Gh$ is a homeomorphism.
  In addition, $\pi|_{\Gh}\colon\Gh\to\G$ is a homeomorphism.
\end{lemma}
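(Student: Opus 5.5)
The plan is to first prove elementwise interpolation estimates, then upgrade local injectivity to global injectivity by a perturbation argument, and finally compose with the closest-point projection. \textbf{Step 1 (elementwise estimates).} On each $\T\in\TriBar_{h,\M}$ apply standard Lagrange interpolation estimates to $\PhiM\circ F_\T$ on $\TRef$ and scale back using shape regularity (\cref{ass:admissible-triangulation}). Since $\PhiM\in C^{\kg+2}$, this gives
\[
\Norm{\PhihM-\PhiM}{L^\infty(\T)}\le Ch^{\kg+1},\qquad \Norm{\DM\PhihM-\DM\PhiM}{L^\infty(\T)}\le Ch^{\kg}.
\]
\textbf{Step 2 (local regularity).} By \cref{ass:admissible-macro-parametrization} the smallest singular value of $\DM\PhiM$ is bounded below by some $c_0>0$. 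Singular values are $1$-Lipschitz under perturbation, so for $h$ small the smallest singular value of $\DM\PhihM$ is at least $c_0/2$ on every element. This proves full column rank with $h$-independent constants, and hence elementwise regularity.

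\textbf{Step 3 (global injectivity, the main obstacle).} Local regularity alone does not rule out distant parts of $\Gh$ folding onto each other. I would handle this through the composite map $\Psi_h\colonequals\Phi^{-1}\circ\pi\circ\Phih\colon\GBar\to\GBar$, which is well defined once $h$ is small enough that $\Gh\subset U_\delta$ (by Step 1). We have $\pi\circ\Phi=\Phi$ and $\pi$ is $C^{\kg+1}$ on $U_\delta$. Hence $\pi\circ\Phih$ is $Ch^{\kg+1}$-close to $\Phi$ in $C^0$, and elementwise $Ch^\kg$-close in $C^1$. So $\Psi_h$ is continuous, piecewise $C^1$ on $\TriBar_h$, and satisfies
\[
\sup|\Psi_h-\Id|\le Ch^{\kg+1},\qquad \Norm{\DM\Psi_h-\Id}{}\le Ch^\kg \text{ elementwise}.
\]
The flat reference surface $\GBar$ and the charts $\Phi$ require care near macro vertices; there I would measure distances intrinsically through $\Phi$ on $\G$.

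For injectivity, take $\xM\ne\bar y$. If they are far apart relative to $h^{\kg+1}$, then $|\Psi_h(\xM)-\Psi_h(\bar y)|\ge d(\xM,\bar y)-2Ch^{\kg+1}>0$. Otherwise they are close, and $\Psi_h$ is bi-Lipschitz on a small star-shaped neighbourhood. This follows by integrating $\DM\Psi_h$ along the segment joining the points; the segment crosses element boundaries, but $\Psi_h$ is continuous there and the elementwise derivative estimate still applies. If macro-element kinks interfere, one can argue instead in the chart $\Phi$ using the intrinsic metric on $\G$. A quantitative version of the first case is needed: a uniform lower bound $|\Psi_h(\xM)-\Psi_h(\bar y)|\ge c\,d(\xM,\bar y)$ once $d(\xM,\bar y)\ge r_0$, which follows from compactness and $\Phi$ being a homeomorphism. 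Together the two cases give injectivity.

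\textbf{Step 4 (surjectivity and conclusion).} $\Psi_h$ is a continuous injection of the closed surface $\GBar$ into itself. By invariance of domain it is open, and by compactness its image is closed, so the image is clopen. Since $\GBar$ is connected (it is homeomorphic to the connected surface $\G$), $\Psi_h$ is a homeomorphism. Consequently $\Phih$ is injective, and as a continuous bijection from the compact space $\GBar$ onto the Hausdorff space $\Gh$ it is a homeomorphism. Its restrictions $\PhihM$ are then homeomorphisms onto $\GhM$. Finally, $\pi|_{\Gh}=\Phi\circ\Psi_h\circ\Phih^{-1}$ is a composition of homeomorphisms. I expect Step 3 to be the main obstacle, specifically the uniform local bi-Lipschitz estimate across element and macro-element interfaces.
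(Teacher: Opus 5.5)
Your argument is correct in substance, but it takes a different route from the paper. The paper applies the perturbation argument directly to $\Phih$. Uniform $W^{1,\infty}$-closeness to the homeomorphism $\Phi$ gives injectivity, and surjectivity onto $\Gh$ is free because $\Gh\colonequals\Phih(\GBar)$. The same argument is then repeated for $\pi\circ\Phih$, and the claim that it maps \emph{onto} $\G$ is asserted without separate justification. You reverse the order. You show that the self-map $\Psi_h=\Phi^{-1}\circ\pi\circ\Phih$ of $\GBar$ is a homeomorphism, with surjectivity from invariance of domain and connectedness, and obtain injectivity of $\Phih$ as a by-product. Your route therefore supplies the explicit proof of $\pi(\Gh)=\G$ that the paper's sketch omits. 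The paper's route is shorter and needs neither $\pi$ nor $\Gh\subset U_\delta$ for the statements about $\Phih$.

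Two caveats on Step 3. First, the elementwise bound $\Norm{\DM\Psi_h-\Id}{}\le Ch^\kg$ is not valid as stated on elements touching a macro edge. There $\Psi_h$ may carry points of $\M$ into a neighbour $\M'$, and its derivative is then close to $(\DM\Phi_{\M'})^+\DM\PhiM$. That map is an $O(1)$ distance from the identity, because $\GBar$ is kinked and $\Phi$ is only continuous across macro edges. So the segment integration must be done for $\pi\circ\Phih$ against $\Phi$ on $\G$; your fallback is not optional. Second, once you do that, the near/far case split becomes unnecessary. $\Phi$ is globally bi-Lipschitz, $|\Phi(\xM)-\Phi(\bar y)|\ge c\,d(\xM,\bar y)$ with $d$ the intrinsic distance on $\GBar$; this is where the care at macro vertices enters. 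The error $E\colonequals\pi\circ\Phih-\Phi$ is continuous with elementwise derivative $O(h^\kg)$, hence $Ch^\kg$-Lipschitz along polygonal paths. This gives $|\pi\circ\Phih(\xM)-\pi\circ\Phih(\bar y)|\ge(c-Ch^\kg)\,d(\xM,\bar y)$ for all pairs at once. The same estimate with $\Phih-\Phi$ in place of $E$ is what the paper's ``standard perturbation argument'' amounts to.
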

\begin{proof}
  The interpolation estimates for curved finite elements
  \citep[see, e.g.][Lemma 1]{Nedelec1976Curved} imply
  $\|\PhiM-\PhihM\|_{W^{1,\infty}(\M)}\to0$ on every macro element. Hence, the
  uniform lower bound for the singular values of $\DM\PhiM$ is inherited by
  $\DM\PhihM$ for all sufficiently small $h$, with constants independent of
  $h$. The standard perturbation argument for regular isoparametric mappings
  then shows that the element maps are regular and that the glued map
  $\Phih\colon\GBar\to\Gh$ is a homeomorphism, because $\Phi$ is a
  homeomorphism and $\Phih$ is uniformly close to $\Phi$. The same argument
  applied on a single macro element gives that
  $\PhihM\colon\M\to\GhM$ is a homeomorphism. Finally,
  $\|\Phi-\Phih\|_{L^\infty(\GBar)}\to0$ implies $\Gh\subset U_\delta$, and
  the $C^1$ closest-point projection satisfies
  $\pi\circ\Phih\to\Phi$ in the same piecewise $W^{1,\infty}$ sense. Thus,
  $\pi\circ\Phih$ is again a homeomorphism from $\GBar$ onto $\G$, and
  $\pi|_{\Gh}=(\pi\circ\Phih)\circ{\Phih}^{-1}$ is a homeomorphism.
\end{proof}

By \cref{lem:regularity-interpolated-geometry}, for $h$ sufficiently small the discrete elements are regular and $\Gh\subset U_\delta$.
We denote by $\nnh$ the elementwise normal vector on $\Gh$, choosing the orientation consistent with $\nn$.

Surface derivatives are defined by first differentiating on the flat domain
$\M$ and then lifting these derivatives to $\G$ and $\Gh$ through the
parametrizations $\Phi$ and $\Phih$, respectively.
Let $f\colon\G\to\R$ be a scalar function and denote derivatives on $\M$ by
$\gradM=(\partial_j)_{j=1}^2$.
For the pullback $\bar{f}\colonequals f\circ\Phi$, the tangential gradient
satisfies
\[
  (\gradG f)\circ\Phi = \gradM\bar{f}\cdot (\DM\Phi)^+ .
\]
The analogous formula with $\Phih$ defines $\gradGh$ elementwise on $\Gh$.
We use $\nabla$ for covariant derivatives on surfaces and $\DM$ and $\D$ for Jacobians
of mappings on $\M$ and $\R^3$, respectively.
If $f\in C^1(U_\delta,\R)$ is defined in a neighbourhood of the surface, then
$\gradG \restr{f}{\G}=\restr{(\PP\D f)}{\G}$, with
$\PP\colonequals \bm{I}-\nn\otimes\nn$.
For vector fields $\bm{v}\in C^1(U_\delta,\R^3)$ we write $\gradG\bm{v}=\PP\D\bm{v}\PP$ for the surface Jacobian and
$\gradG\cdot\bm{v}=\tr(\PP \D\bm{v})$ for the surface divergence.
The corresponding discrete definitions use
$\PPh\colonequals \bm{I}-\nnh\otimes\nnh$ elementwise.
All derivatives, normals, and conormals on $\Gh$ are understood elementwise
unless stated otherwise.
Jumps across interelement edges are allowed and are used later.

We also introduce curvature terms on the continuous and discrete surface.
The Weingarten map on $\G$ is denoted by $\Weingarten\colonequals -\gradG\nn$.
Its trace defines the mean curvature $\MeanCurvature\colonequals\tr\Weingarten=-\gradG\cdot\nn$.
For two-dimensional surfaces we also use the Gaussian curvature $\GaussCurvature \colonequals\frac{1}{2}\big((\tr\Weingarten)^2 - \tr(\Weingarten^2)\big)$. On $\Gh$ the quantities are defined elementwise in the same fashion.

In the following lemma we collect approximation results for these geometric quantities.
\begin{lemma}\label{lem:surface-approximation-estimates}
  The surface approximation $\GhM$ of $\GM$ fulfills standard geometric estimates.
  \begin{align}
    \|\dist\|_{L^\infty(\GhM)} \leq \|\Phi - \Phih\|_{L^\infty(\M)} &\leq C h^{\kg+1}, \label{eq:lem-surface-approx-estimate1} \\
    \max_{\T\in\TriBar_{h,\M}}\|\Phi - \Phih\|_{W^{l,\infty}(\T)} &\leq C h^{\kg+1-l}, \qquad 0\leq l\leq \kg+1, \label{eq:lem-surface-approx-estimate2} \\
    \|\nn\circ\pi - \nnh\|_{L^\infty(\GhM)} &\leq C h^\kg.\label{eq:lem-surface-approx-estimate3} \\
    \max_{\T\in\TriBar_{h,\M}}\|\Weingarten\circ\pi - \Weingartenh\|_{L^\infty(\GhM)} &\leq C h^{\kg-1}.\label{eq:lem-surface-approx-estimate4}
  \end{align}
\end{lemma}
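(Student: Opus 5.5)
The plan is to derive all four estimates from the elementwise Lagrange interpolation error for the smooth patch map $\PhiM\in C^{\kg+2}(\M;\R^3)$. The order is \eqref{eq:lem-surface-approx-estimate2}, then \eqref{eq:lem-surface-approx-estimate1}, then the normal and Weingarten estimates. The latter two are obtained by expressing the normals and Weingarten maps through derivatives of the parametrizations and using \cref{lem:regularity-interpolated-geometry} for the uniform full-rank property.

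\textbf{Interpolation estimate.} For \eqref{eq:lem-surface-approx-estimate2} I pull back to $\TRef$ via $F_\T$. Since $\IRef^\kg$ reproduces polynomials of degree $\kg$, the Bramble--Hilbert lemma gives
\[
  \|(\PhiM-\PhihM)\circ F_\T\|_{W^{l,\infty}(\TRef)}\le C\,|\PhiM\circ F_\T|_{W^{\kg+1,\infty}(\TRef)} .
\]
Scaling with the affine map $F_\T$ then yields $h^{\kg+1-l}$. Shape regularity and quasi-uniformity from \cref{ass:admissible-triangulation} control $\|\D F_\T^{-1}\|\lesssim h^{-1}$, and $|\PhiM|_{W^{\kg+1,\infty}(\M)}$ is bounded because the macro grid is finite.

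\textbf{Distance estimate.} The case $l=0$ is the second inequality in \eqref{eq:lem-surface-approx-estimate1}. For the first inequality, fix $\xGh=\Phih(\xM)\in\GhM$. Then $|\dist(\xGh)|=\operatorname{dist}(\xGh,\G)\le|\Phih(\xM)-\Phi(\xM)|$, because $\Phi(\xM)\in\G$.

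\textbf{Normal estimate.} For \eqref{eq:lem-surface-approx-estimate3} I write, at $\xM\in\T$,
\[
  \nnh\circ\Phih=\frac{\partial_1\Phih\times\partial_2\Phih}{|\partial_1\Phih\times\partial_2\Phih|},
  \qquad
  \nn\circ\Phi=\frac{\partial_1\Phi\times\partial_2\Phi}{|\partial_1\Phi\times\partial_2\Phi|}.
\]
The map $(a,b)\mapsto (a\times b)/|a\times b|$ is Lipschitz on the set where $|a\times b|$ is bounded below. \Cref{lem:regularity-interpolated-geometry} and \cref{ass:admissible-macro-parametrization} guarantee this lower bound uniformly in $h$. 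The $l=1$ case of \eqref{eq:lem-surface-approx-estimate2} then gives $|\nnh\circ\Phih-\nn\circ\Phi|\le Ch^\kg$.

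It remains to compare $\nn\circ\Phi$ with $\nn\circ\pi\circ\Phih$. Here I use that $\nn\circ\pi$ is Lipschitz on $U_\delta$, since $\dist\in C^{\kg+2}$, so that
\[
  |\nn(\pi(\Phih))-\nn(\Phi)|\le C|\Phih-\Phi|\le Ch^{\kg+1}.
\]
The orientation is fixed consistently by the choice made for $\nnh$.

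\textbf{Weingarten estimate.} For \eqref{eq:lem-surface-approx-estimate4} I express both maps via the pullback. With $\bar{\nB}\colonequals\nn\circ\Phi$,
\[
  \Weingarten\circ\Phi=-\DM(\bar{\nB})\,(\DM\Phi)^+,
  \qquad
  \Weingartenh\circ\Phih=-\DM(\nnh\circ\Phih)\,(\DM\Phih)^+
\]
elementwise, where $\bar{\nB}$ is extended by zero in the normal directions as in the definition of $\gradG$. The difference then splits into two parts:
\begin{itemize}
  \item $(\DM\Phi)^+-(\DM\Phih)^+=O(h^\kg)$, because the Moore--Penrose inverse is Lipschitz on full-rank matrices with uniformly bounded singular values;
  \item $\DM(\bar{\nB}-\nnh\circ\Phih)$, which involves second derivatives of $\Phi-\Phih$ (the $l=2$ case of \eqref{eq:lem-surface-approx-estimate2}, giving $O(h^{\kg-1})$) together with products of first derivatives. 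This uses that the map $(a,b)\mapsto(a\times b)/|a\times b|$ is smooth, so its derivative is Lipschitz.
\end{itemize}
Finally, I replace $\Weingarten\circ\Phi$ by $\Weingarten\circ\pi\circ\Phih$ with an $O(h^{\kg+1})$ error, since $\Weingarten$ extended constantly in the normal direction is $C^1$ on $U_\delta$ because $\dist\in C^{\kg+2}$ with $\kg\ge1$.

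\textbf{Main obstacle.} I expect the main difficulty to be the bookkeeping in this last step. One must check that the chain-rule terms are uniformly bounded (the second derivatives of $\Phih$ are bounded by the $l=2$ estimate), and that the projections $\PP$ versus $\PPh$ implicit in the definitions of $\gradG$ and $\gradGh$ differ only by $O(h^\kg)$, which is of higher order than $h^{\kg-1}$.
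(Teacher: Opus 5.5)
Your proposal is correct and follows essentially the paper's route. You obtain \eqref{eq:lem-surface-approx-estimate1}--\eqref{eq:lem-surface-approx-estimate2} from standard Bramble--Hilbert/scaling interpolation estimates for the patch map plus the definition of $\dist$, and \eqref{eq:lem-surface-approx-estimate3}--\eqref{eq:lem-surface-approx-estimate4} by comparing normals and Weingarten maps through first and second derivatives of $\Phi$ and $\Phih$; the paper defers this parametric comparison to Demlow's Proposition~2.3, and you fill in the details it leaves to the references (Lipschitz continuity of $(a,b)\mapsto a\times b/|a\times b|$ and of the pseudo-inverse, and the $O(h^{\kg+1})$ replacement of $\Phi$ by $\pi\circ\Phih$).
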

\begin{proof}
  The estimates \eqref{eq:lem-surface-approx-estimate1} and \eqref{eq:lem-surface-approx-estimate2} follow from the standard interpolation estimates for the patch maps and the definition of the surface distance function $\dist$ \citep[cf.][Lemma 1]{Nedelec1976Curved}.
  The normal vector estimate \eqref{eq:lem-surface-approx-estimate3} and Weingarten map estimate \eqref{eq:lem-surface-approx-estimate4} follow by arguments similar to those in \citet[Proposition 2.3]{Demlow2009Higher}.
\end{proof}

In order to compare derivatives on $\G$ and $\Gh$ we need the following lemma.
\begin{lemma}\label{lem:pseudo-inverse-geometry-mapping}
  Let $\PhiM$ and $\PhihM$ be defined as above on a fixed macro element $\M$,
  and let $h$ be sufficiently small. Then the following elementwise estimate
  holds:
  \begin{align}
    \max_{\T\in\TriBar_{h,\M}}\|(\DM\Phi)^+ - (\DM\Phih)^+\|_{L^{\infty}(\T)} &\leq C h^{\kg}, \label{eq:lem-surface-approx-Dinv}
  \end{align}
  with constant $C$ depending on $\DM\Phi$.
\end{lemma}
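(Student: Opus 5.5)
The plan is to write the Moore--Penrose inverse explicitly in terms of the Gram matrix and apply a perturbation argument. For a $3\times2$ matrix $A$ of full column rank we have $A^+=(A^\top A)^{-1}A^\top$. Set $A\colonequals\DM\Phi$ and $A_h\colonequals\DM\Phih$ on a fixed element $\T\in\TriBar_{h,\M}$. By \cref{ass:admissible-macro-parametrization} and \cref{lem:regularity-interpolated-geometry}, both have full column rank for $h$ small. Moreover, the smallest singular values are bounded below uniformly in $h$ and $\T$. Hence the Gram matrices $G\colonequals A^\top A$ and $G_h\colonequals A_h^\top A_h$ are symmetric positive definite, and their smallest eigenvalues are bounded below by a constant $c>0$ independent of $h$. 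Their inverses are therefore bounded in operator norm by $c^{-1}$, uniformly in $h$ and $\T$.

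Next I would decompose the difference as
\begin{equation*}
  A^+ - A_h^+ = G^{-1}\big(A - A_h\big)^\top + \big(G^{-1}-G_h^{-1}\big)A_h^\top ,
\end{equation*}
and use the resolvent identity $G^{-1}-G_h^{-1}=G^{-1}(G_h-G)G_h^{-1}$. For the Gram difference I would write
\begin{equation*}
  G_h-G=(A_h-A)^\top A_h + A^\top(A_h-A).
\end{equation*}
The key input is \eqref{eq:lem-surface-approx-estimate2} with $l=1$, which gives $\|A-A_h\|_{L^\infty(\T)}\le Ch^{\kg}$ uniformly over $\T\in\TriBar_{h,\M}$. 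The factors $\|A\|_{L^\infty}$ and $\|A_h\|_{L^\infty}$ are bounded: the first because $\Phi\in C^{\kg+2}$ on the compact macro element, and the second by the same estimate together with the triangle inequality. Combining these bounds pointwise on $\T$ gives $|A^+-A_h^+|\le C h^{\kg}$. The constant depends only on $\|\DM\Phi\|_{L^\infty}$ and on the lower singular value bound of $\DM\Phi$, as the statement claims.

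The only delicate step is the uniform lower bound for the singular values of $A_h$. I would handle it by Weyl's inequality, $\sigma_{\min}(A_h)\ge\sigma_{\min}(A)-\|A-A_h\|$. With the $O(h^\kg)$ bound this gives $\sigma_{\min}(A_h)\ge\tfrac12\sigma_{\min}(A)$ once $h$ is small enough, which is where the hypothesis ``$h$ sufficiently small'' enters. Everything else is a pointwise matrix computation, followed by taking the supremum over $\T$ and then the maximum over elements.
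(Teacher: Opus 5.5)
Your proposal is correct and takes essentially the same route as the paper: the Gram-matrix form $A^+=(A^\top A)^{-1}A^\top$, the same splitting of $A^+-A_h^+$ via the resolvent identity, and the same expansion of the Gram difference give a pointwise bound $|A^+-A_h^+|\le C|A-A_h|$, which is then combined with \eqref{eq:lem-surface-approx-estimate2} for $l=1$. Your Weyl-inequality step only makes explicit the uniform lower singular-value bound for $\DM\Phih$ that the paper gets from uniform convergence. The one inaccuracy is the constant: it also inherits the interpolation constant from \eqref{eq:lem-surface-approx-estimate2}, and hence higher derivatives of $\Phi$, not only $\|\DM\Phi\|_{L^\infty}$ and $\sigma_{\min}(\DM\Phi)$.
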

\begin{proof}
  By the uniform regularity in \cref{ass:admissible-macro-parametrization}, $\sigma_{\min}(\DM\Phi)$ is bounded away from $0$ on $\M$.
  By \eqref{eq:lem-surface-approx-estimate2}, $\DM\Phih$ converges uniformly to $\DM\Phi$.
  Hence, for $h$ sufficiently small, both $\DM\Phi$ and $\DM\Phih$ have
  uniformly bounded norms and uniformly bounded inverse Gram matrices.
  We set
  \[
    A\colonequals \DM\Phi,\qquad
    B\colonequals \DM\Phih,\qquad
    G_A\colonequals A^\top A,\qquad
    G_B\colonequals B^\top B .
  \]
  Since $A^+=G_A^{-1}A^\top$ and $B^+=G_B^{-1}B^\top$, the identity
  \[
    A^+-B^+
    =G_A^{-1}(A-B)^\top
    +G_A^{-1}(G_B-G_A)G_B^{-1}B^\top
  \]
  holds pointwise.
  Moreover, $G_B-G_A=B^\top(B-A)+(B-A)^\top A$.
  The uniformly bounded factors in these two identities give
  \[
    \|(\DM\Phi)^+ - (\DM\Phih)^+\| \leq C\|\DM\Phi - \DM\Phih\|.
  \]
  Taking the $L^\infty$ norm on each element $\T\subset\M$ and using
  \cref{lem:surface-approximation-estimates} \eqref{eq:lem-surface-approx-estimate2}
  gives \eqref{eq:lem-surface-approx-Dinv}.
\end{proof}

By \cref{lem:regularity-interpolated-geometry}, the surfaces $\GM$ and $\GhM$ parametrized over $\M$ induce the area elements
$\dG = \mG\dM$ and $\dGh = \mGh\dM$, where $\mG = \mu[\Phi]$ and $\mGh=\mu[\Phih]$ with
\begin{align*}
  \mu[\bm{f}](\xM)  &\colonequals \sqrt{\det\left((\DM \bm{f}(\xM))^\top \DM \bm{f}(\xM)\right)} .
\end{align*}

\begin{lemma}\label{lem:surface-elements}
  The area elements fulfill the estimates
  \begin{align}
    \max_{\T\in\TriBar_{h,\M}}\sup_{\xM\in\T}|\mG(\xM) - \mGh(\xM)| &\leq C h^\kg, \label{eq:lem-surface-elements1} \\
    \max_{\T\in\TriBar_{h,\M}}\sup_{\xM\in\T}|\mu[\pi\circ\Phih](\xM) - \mGh(\xM)| &\leq C h^{\kg+1}. \label{eq:lem-surface-elements2}
  \end{align}
\end{lemma}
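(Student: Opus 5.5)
For \eqref{eq:lem-surface-elements1} I would argue purely from the Jacobian estimates. The map $A\mapsto \mu$, $A\mapsto\sqrt{\det(A^\top A)}$, is smooth on the set of $3\times 2$ matrices whose smallest singular value is bounded below by a fixed constant. By \cref{ass:admissible-macro-parametrization} and \cref{lem:regularity-interpolated-geometry}, both $\DM\Phi$ and $\DM\Phih$ lie in such a compact set, uniformly in $h$. Hence this map is Lipschitz there, and pointwise on each $\T$
\[
  |\mG-\mGh| \le C\,|\DM\Phi-\DM\Phih| .
\]
Taking the supremum over $\T$ and using \eqref{eq:lem-surface-approx-estimate2} with $l=1$ gives the bound $Ch^\kg$.

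For \eqref{eq:lem-surface-elements2} the Lipschitz argument alone only yields $h^\kg$, so the geometric structure of $\pi$ has to be used. I would proceed pointwise at $\xM\in\T$, writing $\hat x=\Phih(\xM)$ and $B=\DM\Phih(\xM)$.

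First, differentiate \eqref{eq:closest-point-projection} using $\D\dist=\nn$ to obtain
\[
  \D\pi(\hat x)=\PP(\hat x)-\dist(\hat x)\,\Weingarten(\hat x),
\]
with the extensions taken constant in the normal direction. By \eqref{eq:lem-surface-approx-estimate1}, $|\dist(\hat x)|\le Ch^{\kg+1}$, so
\[
  \DM(\pi\circ\Phih)=\PP B+\OO(h^{\kg+1}),
\]
where $\PP$ is evaluated at $\pi(\hat x)$. Since $\mu$ is Lipschitz near these matrices, it then suffices to compare $\mu[\PP B]$ with $\mu[B]$.

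Second, decompose $B=\PP B+\nn\otimes(\nn^\top B)$. Because the two columns of $\PP B$ are orthogonal to $\nn$, we have
\[
  B^\top B=(\PP B)^\top\PP B+w w^\top,\qquad w\colonequals B^\top\nn .
\]
The vector $w$ collects the normal components of the discrete tangent vectors. Since the columns of $B$ are tangent to $\Gh$, we get $|w|\le |B|\,|\nn\circ\pi-\nnh|\le Ch^\kg$ by \eqref{eq:lem-surface-approx-estimate3}. The perturbation $ww^\top$ is therefore of size $h^{2\kg}$. By the matrix determinant lemma, $\det(G+ww^\top)=\det G\,(1+w^\top G^{-1}w)$ with $G$ uniformly invertible, and hence
\[
  \mu[B]=\mu[\PP B]\,\big(1+\OO(h^{2\kg})\big).
\]
Since $2\kg\ge\kg+1$ for $\kg\ge1$, combining the two steps gives $|\mu[\pi\circ\Phih]-\mGh|\le C(h^{\kg+1}+h^{2\kg})\le Ch^{\kg+1}$.

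I expect the main obstacle to be the second step, namely recognizing that the normal error enters the area element only quadratically. This is what the Pythagorean splitting $B^\top B=(\PP B)^\top\PP B+ww^\top$ captures. A naive estimate would again lose an order here. I would also check carefully that $\D\pi$ gives exactly $\PP$ at $\pi(\hat x)$ up to an $\OO(\dist)$ term, including the evaluation point and the sign convention for $\Weingarten$.
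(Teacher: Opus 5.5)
Your proof is correct. For \eqref{eq:lem-surface-elements1} it is the paper's argument: Lipschitz continuity of $A\mapsto\sqrt{\det(A^\top A)}$ on uniformly full-rank matrices, combined with \eqref{eq:lem-surface-approx-estimate2} for $l=1$.

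For \eqref{eq:lem-surface-elements2} the paper gives no argument of its own. It only notes that $\Phih$ is a uniformly regular polynomial surface in $U_\delta$ and invokes N\'ed\'elec's projected-area lemma. The mechanism of that lemma is the one reproduced in the proof of \cref{cor:weighted-diff-of-area}. There one expands $\|\partial_1(\pi\circ\Phih)\times\partial_2(\pi\circ\Phih)\|^2-\|\partial_1\Phih\times\partial_2\Phih\|^2$ to first order in $\partial_j(\pi\circ\Phih-\Phih)=-\partial_j(\dist\,\nn)$. The $\partial_j\dist$ contribution, which is only $\OO(h^{\kg})$, drops out against the tangential cross-product vectors. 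What remains is $\dist\cdot\OO(1)=\OO(h^{\kg+1})$ plus an $\OO(h^{2\kg})$ remainder.

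You isolate the same cancellation differently. Replacing $\D\pi$ by $\PP$ costs $\OO(\dist)$. The splitting $B^\top B=(\PP B)^\top\PP B+ww^\top$ together with the matrix determinant lemma then shows that the tilt between the tangent planes of $\Gh$ and $\G$ enters only quadratically, through $|w|\le C|\nn\circ\pi-\nnh|$.

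What each route buys:
\begin{itemize}
\item Your version is self-contained and uses only \eqref{eq:lem-surface-approx-estimate1} and \eqref{eq:lem-surface-approx-estimate3}. Because it works with Gram determinants rather than cross products, it carries over verbatim to hypersurfaces of any dimension.
\item The cross-product form can be reused as an identity in \cref{cor:weighted-diff-of-area}, where it writes the leading term as $\dist$ times a weight. You would get such a representation too if you linearized $\mu$ at $\PP B$ instead of only using its Lipschitz bound.
\end{itemize}

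Two small points. First, the exact identity is $\D\pi(\hat x)=\PP-\dist(\hat x)\,\D^2\dist(\hat x)=(\Id-\dist(\hat x)\Weingarten)^{-1}\PP$, with $\Weingarten$ evaluated at $\pi(\hat x)$. So with the paper's sign convention the first-order correction is $+\dist\Weingarten$, not $-\dist\Weingarten$. You only use that the correction is $\OO(\dist)$, which holds because $\D^2\dist$ is bounded on $U_\delta$, so nothing changes. Second, you should state why $G=(\PP B)^\top\PP B$ is uniformly invertible, since both the Lipschitz step at $\PP B$ and the determinant lemma need it. This follows from $G=B^\top B-ww^\top$, $|w|\le Ch^{\kg}$, and the uniform full rank of $B$ from \cref{lem:regularity-interpolated-geometry}.
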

\begin{proof}
  The first estimate follows from the $W^{1,\infty}$ geometry estimate \eqref{eq:lem-surface-approx-estimate2}, \cref{lem:regularity-interpolated-geometry}, and the Lipschitz continuity of $A\mapsto\sqrt{\det(A^\top A)}$ on uniformly full-rank $3\times2$ matrices.
  By \cref{lem:regularity-interpolated-geometry} and the interpolation estimate, $\Phih$ defines a uniformly regular polynomial surface in $U_\delta$.
  Hence the projected area estimate of \citet[Lemma 3]{Nedelec1976Curved} applies and gives the second estimate.
\end{proof}

\subsection{Functions, interpolation, and liftings}\label{sec:functions-interpolation-liftings}

Sobolev norms on $\G$ and on macro patches $\GM$ are the usual surface
Sobolev norms. On individual curved exact or discrete elements, these norms
are equivalently defined by pullback, with constants uniform in $h$. For
discrete functions on $\Gh$, derivatives are evaluated elementwise on the
surface elements $\Phih(\T)$.

Interpolation of functions is also defined by pullback to the flat macro element or the reference element.

\begin{definition}\label{def:surface-function-interpolation}
  The flat Lagrange interpolation $\IBar_h^\ku\colon C^0(\GBar)\to C^0(\GBar)$ and the induced surface interpolation $\II_{h,\kg}^\ku\colon C^0(\G)\to C^0(\Gh)$ are defined elementwise by
  \begin{align*}
    \II_{h,\kg}^{\ku}(f)\circ\Phih
    \colonequals \IBar_h^\ku(f\circ\Phi)
    \colonequals \IRef^\ku(f\circ \Phi\circ F_{\T})\circ F_{\T}^{-1},
    \qquad \T\in\TriBar_{h,\M},\quad \M\in\TriBar .
  \end{align*}
  Here $\IRef^\ku$ denotes polynomial interpolation of order $\ku$ on the reference element $\TRef$.
\end{definition}

By \cref{lem:regularity-interpolated-geometry}, for $h$ sufficiently small the parametrization lifting is well defined by
\begin{align}
  L &\colonequals \Phi\circ (\Phih)^{-1}\colon\GhM\to \GM \label{eq:lifting-operator}
\end{align}
on each macro patch.
It is tied to the construction of $\Gh$ and to the interpolation operator because exact and discrete functions are pulled back to the same flat macro element.
The closest-point projection $\pi\colon\Gh\to\G$ is the standard geometric lifting in surface finite element analysis.
The same lemma states that $\pi|_{\Gh}$ is a homeomorphism onto $\G$.
Thus $\pi^{-1}$ below means the inverse of this restricted projection.
The following estimate compares the two liftings.

\begin{lemma}\label{lem:interpolation-error-estimates}
  Let $\T\in\TriBar_{h,\M}$ and set $\T_h\colonequals\Phih(\T)\subset\GhM$.
  There exists an element neighbourhood $\omega_{\T,h}\subset\GhM$ of $\T_h$ with $\operatorname{diam}(\omega_{\T,h})\leq C h$ such that $L(\omega_{\T,h})\supset \pi(\T_h)$ and $\pi(\omega_{\T,h})\supset L(\T_h)$.
  Let $f\in W^{l+1,\infty}(D)$ with $D$ either $L(\omega_{\T,h})$ or $\pi(\omega_{\T,h})$ for some $l\leq \kg$. Then
  \begin{align}
    \|f\circ L - f\circ\pi\|_{W^{l,\infty}(\T_h)} &\leq
       C h^{\kg+1-l} \|f\|_{W^{l+1,\infty}(D)}.\label{eq:standard-interpolation-1}
  \end{align}
\end{lemma}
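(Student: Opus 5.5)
The plan is to reduce the comparison of the two liftings to a pointwise distance estimate between $L$ and $\pi$ on $\T_h$, and then to Taylor-expand $f$ along the segment joining $L(\hat x)$ and $\pi(\hat x)$. Pulling everything back to the flat element $\T$ via $\Phih$, we have $L\circ\Phih=\Phi$ and $\pi\circ\Phih$. So the task becomes estimating $\bar g\colonequals f\circ\Phi - f\circ\pi\circ\Phih$ in $W^{l,\infty}(\T)$. The chain rule with the uniform regularity of $\Phih$ from \cref{lem:regularity-interpolated-geometry} transfers flat norms back to norms on $\T_h$ with $h$-independent constants.

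\textbf{Step 1: the neighbourhood $\omega_{\T,h}$.} By \eqref{eq:lem-surface-approx-estimate1}, for $\bar x\in\T$ we have
\[
|\Phi(\bar x)-\pi(\Phih(\bar x))|\le |\Phi(\bar x)-\Phih(\bar x)|+|\dist(\Phih(\bar x))|\le Ch^{\kg+1}.
\]
Hence $L(\T_h)$ and $\pi(\T_h)$ are $Ch^{\kg+1}$-close. We take $\omega_{\T,h}$ to be the union of discrete elements within a fixed number of layers around $\T_h$. Quasi-uniformity and the homeomorphism property of $L$ and $\pi$ then give $\operatorname{diam}\le Ch$ and the two containments. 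Near macro edges we use that $\Phi$ and $\Phih$ glue continuously, so the element patch may cross macro boundaries.

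\textbf{Step 2: the map difference.} We show that $\psi\colonequals\Phi-\pi\circ\Phih$ satisfies $\|\psi\|_{W^{j,\infty}(\T)}\le Ch^{\kg+1-j}$ for $0\le j\le \kg$. Write $\pi\circ\Phih-\pi\circ\Phi=\int_0^1 \D\pi(\Phi+t(\Phih-\Phi))\,dt\,(\Phih-\Phi)$, using $\pi\circ\Phi=\Phi$ since $\Phi$ maps into $\G$. Since $\pi\in C^{\kg+1}$ (because $\dist\in C^{\kg+2}$), we can differentiate up to $j\le\kg$ times. By Leibniz' rule together with \eqref{eq:lem-surface-approx-estimate2}, every term carries at least one factor $\DM^i(\Phih-\Phi)$ of size $h^{\kg+1-i}$. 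The remaining factors involve derivatives of $\Phi+t(\Phih-\Phi)$, which are bounded elementwise for $\Phih$: its derivatives of order at most $\kg$ are bounded by $\Phi$ plus the interpolation error. This gives the claimed bound.

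\textbf{Step 3: derivative of the composition difference.} Write
\[
f\circ\Phi-f\circ\pi\circ\Phih=\int_0^1 \D\tilde f(\pi\circ\Phih+t\psi)\,dt\cdot\psi,
\]
with $\tilde f$ a suitable extension of $f$, for instance $f\circ\pi$, which is $W^{l+1,\infty}$ in $U_\delta$ with norm controlled by that on $D$ restricted to the relevant region. Then apply Leibniz and the Faà di Bruno formula for $l$ derivatives. A term in which $\DM^i\psi$ appears costs $h^{\kg+1-i}$. It is multiplied by derivatives of $\D\tilde f$ of order at most $l-i$, hence derivatives of $f$ of order at most $l+1$, composed with maps whose derivatives are bounded. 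Summing gives $Ch^{\kg+1-l}\|f\|_{W^{l+1,\infty}(D)}$.

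The main obstacle is the last point: the segment $\pi\circ\Phih+t\psi$ leaves $\G$. I would handle it using the extension $f\circ\pi$ on $U_\delta$, whose derivatives are bounded by those of $f$ on $D$ because $\pi$ maps a small tubular piece over $\omega_{\T,h}$ into $D$. The alternative is a geodesic interpolation on $\G$. Either way, Step 1 is what guarantees that the needed points lie over $D$.
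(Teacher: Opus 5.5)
Your proposal is correct and follows the same route as the paper: pull back by $\Phih$, bound $\Phi-\pi\circ\Phih$ in $W^{j,\infty}(\T)$ using $\pi\circ\Phi=\Phi$, the smoothness of $\pi$ and \eqref{eq:lem-surface-approx-estimate2}, and then apply the chain rule to $f$. Your additional details fill in points the paper's short proof leaves implicit. First, you use the extension $f\circ\pi$ to make sense of the mean-value step along a segment that leaves $\G$. Second, you let $\omega_{\T,h}$ cross macro edges, which is in general needed for elements touching $\partial\M$ since $\pi(\GhM)\neq\GM$, even though the statement literally requires $\omega_{\T,h}\subset\GhM$.
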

\begin{proof}
  Pulling back to $\T$ gives
  \[
    (L-\pi)\circ\Phih = \Phi-\pi\circ\Phih .
  \]
  Since $\pi\circ\Phi=\Phi$, the smoothness of $\pi$ on $U_\delta$,
  \cref{lem:regularity-interpolated-geometry}, and
  \eqref{eq:lem-surface-approx-estimate2} imply
  $\|L-\pi\|_{W^{l,\infty}(\T_h)}\leq C h^{\kg+1-l}$ for $l\leq\kg$.
  The neighbourhood $\omega_{\T,h}$ is chosen as a uniformly bounded element
  patch around $\T_h$ that contains the images required by $L$ and $\pi$.
  The chain rule then gives
  \begin{align*}
    \|f\circ L - f\circ\pi\|_{W^{l,\infty}(\T_h)}
      &\leq C \|f\|_{W^{l+1,\infty}(D)}\|L-\pi\|_{W^{l,\infty}(\T_h)} \\
      &\leq C h^{\kg+1-l} \|f\|_{W^{l+1,\infty}(D)}.
      \tag*{\qedhere} 
  \end{align*}
\end{proof}

\subsection{Symmetric macro triangulation}\label{sec:symmetric-macro-triangulation}
The refinement procedure for obtaining $\GBar_h$ from the macro parameter
surface $\GBar=\GBar_{h_0}$ can produce additional symmetric structure.
By ``symmetric structure'' we mean that, after some initial refinement, pairs
of elements appear that are mirror images of each other and meet in a single
point.  More precisely:

\begin{definition}\label{def:symmetric-triangles}
  We call a pair $\T_1,\T_2\in\TriBar_{h,\M}$ symmetric if both elements meet
  in a single point $\xBar_0$, i.e., $\T_1\cap\T_2=\{\xBar_0\}$, and there are affine reference mappings
  $F_{\T_1}\colon\TRef\to\T_1$ and $F_{\T_2}\colon\TRef\to\T_2$ such that
  \[
    F_{\T_1}(\xRef) - \xBar_0 = \xBar_0 - F_{\T_2}(\xRef),
  \]
  with $F_{\T_1}(0)=F_{\T_2}(0)=\xBar_0$.
  See \cref{fig:red_refinement} for an illustration.
\end{definition}

We first record one refinement strategy that produces this structure.
In red refinement \citep{BankEtAl1983Refinement}, each triangle is subdivided
into four congruent subtriangles by connecting the edge midpoints.  This
refinement strategy is very common in finite element grid implementations
\citep[e.g.,][]{SKSF2017DuneFoamGrid}.

\begin{lemma}[Symmetric pair decomposition for red refinement]\label{lem:existence-of-symmetric-triangles}
  Assume that $\TriBar_{h,\M}$ is obtained from the macro triangle $\M$ by sufficiently many uniform red refinements.  Then there is a disjoint decomposition
  \[
    \TriBar_{h,\M}=\mathcal A_{h,\M}\dot\cup\mathcal B_{h,\M},
  \]
  where $\mathcal A_{h,\M}$ can be partitioned into disjoint symmetric pairs in the sense of \cref{def:symmetric-triangles} and $\mathcal B_{h,\M}=\TriBar_{h,\M}\setminus\mathcal A_{h,\M}$ is the remainder.  Moreover, since $h=\max_{\T\in\TriBar_h}\diam(\T)$,
  \[
    \bigcup_{\T\in\mathcal B_{h,\M}}\T
    \subset
    \{\xM\in\M:\operatorname{dist}(\xM,\partial\M)\le h\},
  \]
  and consequently
  \[
    |\mathcal B_{h,\M}|\le C_{\partial\M}h^{-1}.
  \]
  The constant $C_{\partial\M}$ depends only on the fixed macro element $\M$ and the uniform shape-regularity and quasi-uniformity constants.
\end{lemma}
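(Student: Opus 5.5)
Uniform red refinement of $\M$ through $n$ levels produces the standard structured grid, so I plan to argue directly in barycentric-type coordinates. Fix an affine map $F_\M$ from the reference triangle onto $\M$ with vertex $F_\M(0)$. After $n$ red refinements, with $N=2^n$, the elements are the images under $F_\M$ of the $N^2$ triangles of the uniform lattice subdivision of $\TRef$ with spacing $1/N$. Of these, $N(N+1)/2$ are "upward" triangles $\{\xRef:\xRef-(i,j)/N\in \TRef/N\}$ and $N(N-1)/2$ are "downward" triangles, which are point reflections of upward ones. The key observation is that an upward triangle with corner $p=(i,j)/N$ and the downward triangle $2p-(\text{upward triangle})$ meet exactly in the single point $p$. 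Moreover, taking $F_{\T_1}(\xRef)=F_\M(p+\xRef/N)$ and $F_{\T_2}(\xRef)=F_\M(p-\xRef/N)$, affinity of $F_\M$ gives $F_{\T_1}(\xRef)-\xBar_0=\xBar_0-F_{\T_2}(\xRef)$ with $\xBar_0=F_\M(p)$. This is exactly \cref{def:symmetric-triangles}. The downward triangle lies in $\TRef$ precisely when $i,j\ge1$.

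The plan is then to pair up triangles explicitly and discard the rest. For each lattice point $p=(i,j)/N$ with $i,j\ge1$ and $i+j\le N-1$, I pair the upward triangle with corner $p$ and the downward triangle reflected through $p$. I must check that these pairs are disjoint as sets of elements. Each upward triangle is determined by its corner $p$, so it is used at most once. Each downward triangle $2p-(p+\TRef/N)$ determines $p$ as its "top-right" vertex, so it is also used at most once. Hence the set $\mathcal A_{h,\M}$ of all paired triangles is a disjoint union of symmetric pairs.

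The remainder $\mathcal B_{h,\M}$ consists of two kinds of triangles. The first kind are upward triangles with $i=0$, $j=0$, or $i+j=N-1$. The second kind are downward triangles whose reflection point $p$ fails the constraints; since $i,j\ge1$ always holds for downward triangles in $\TRef$, this means $i+j=N$. Each of these triangles has a vertex (or, for $i+j=N-1$, an edge) on $\partial\TRef$, so after mapping by $F_\M$ it touches $\partial\M$. Every point of such an element is therefore within distance $\diam(\T)\le h$ of $\partial\M$, which gives the inclusion.

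For the counting bound, the classes above have $\OO(N)$ members, at most about $4N$. By quasi-uniformity and shape regularity, $N\simeq \diam(\M)/h$. Alternatively, each element has area $\ge c h^2$ while the strip $\{\operatorname{dist}(\cdot,\partial\M)\le h\}$ has area $\le |\partial\M|\,h$, which gives $|\mathcal B_{h,\M}|\le C_{\partial\M}h^{-1}$ directly. "Sufficiently many" refinements is needed only so that $N\ge2$ and the pairing is nonempty.

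The main obstacle is bookkeeping rather than ideas: making sure the reflected downward triangle is really an element of the refined grid, and that no triangle is counted in two pairs. The consistent choice of the upward corner as the reflection point settles both.
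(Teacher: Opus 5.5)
Your proof is correct, but it uses a different decomposition from the paper's. The paper three-colours the vertices of the refined lattice and fixes one colour class. For each interior vertex of that class it groups the six incident triangles into a hexagon, which splits into three point-reflected pairs. Disjointness is immediate because every fine triangle has exactly one vertex of each colour. The remainder is the set of triangles whose chosen-colour vertex lies on $\partial\M$.

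You instead work in lattice coordinates and attach exactly one pair to every interior lattice point $p$: the upward triangle with lower-left corner $p$ and its reflection through $p$. Disjointness holds because upward triangles are determined by their lower-left vertex and downward triangles by their upper-right vertex.

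Your route is more explicit. It identifies the unpaired triangles exactly: there are $(2N-1)+(N-1)=3N-2$ of them, so your bound of about $4N$ is safe. It also checks the affine reference maps of \cref{def:symmetric-triangles} in one line. The paper's colouring argument is shorter and coordinate-free. In both constructions the bounded overlap of the sets $Q_{\symT}$ used later in \cref{cor:weighted-flat-interpolation-estimate} is easy, since in yours distinct pairs even have distinct shared vertices.

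One small slip: for $N=2$ no lattice point satisfies $i,j\ge 1$ and $i+j\le N-1$. A nonempty pairing therefore needs $N\ge 4$, i.e.\ at least two refinements, which is exactly the threshold the paper uses. This does not affect the statement, which holds trivially for small $N$.
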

\begin{proof}
  After two uniform red refinements, the refined macro element is a triangular
  lattice.  Its vertices admit a periodic colouring with three colours such that
  each fine triangle has exactly one vertex of each colour.  We choose one
  colour class.  If $\xBar_0$ is a chosen interior vertex, then the six fine
  triangles incident to $\xBar_0$ form a hexagonal patch.  The patches
  associated with the chosen interior vertices are pairwise disjoint, since
  every fine triangle has only one vertex of the chosen colour. This decomposition
  into hexagonal patches is not unique, but we only need the existence of such
  a pattern.

  Opposite triangles in each hexagonal patch are reflections of one another
  with respect to the common vertex $\xBar_0$.  Hence, their affine reference
  maps can be chosen so that the reflection relation in
  \cref{def:symmetric-triangles} holds.  Thus, every interior patch splits into
  three symmetric pairs.

  Let $\mathcal A_{h,\M}$ be the union of all triangles belonging to these
  symmetric pairs, and let
  $\mathcal B_{h,\M}\colonequals\TriBar_{h,\M}\setminus\mathcal A_{h,\M}$.
  Every fine triangle whose chosen-colour vertex is an interior vertex belongs
  to exactly one of the patches above.  The remaining triangles have their
  chosen-colour vertex on $\partial\M$.  Since finite element cells are treated
  as closed cells and $\diam(\T)\le h$, every point of such a triangle has
  distance at most $h$ from $\partial\M$.  This proves the asserted boundary
  strip inclusion.

  It remains to count the remainder.  The area of the strip
  $\{\xM\in\M:\operatorname{dist}(\xM,\partial\M)\le h\}$ is bounded by
  $C|\partial\M|h$ for $h$ sufficiently small, with $C$ depending only on the
  shape of the fixed macro element.  Uniform shape regularity and
  quasi-uniformity give $|\T|\ge c h^2$ for each fine triangle, with some
  $c>0$ independent of $h$.  Hence the number of fine triangles contained in
  the strip is bounded by $C|\partial\M|/c\,h^{-1}$.

  See \cref{fig:red_refinement} for an illustration.
\end{proof}

\begin{figure}[ht]
    \begin{subfigure}{0.25\linewidth}
        \includegraphics[width=.95\textwidth]{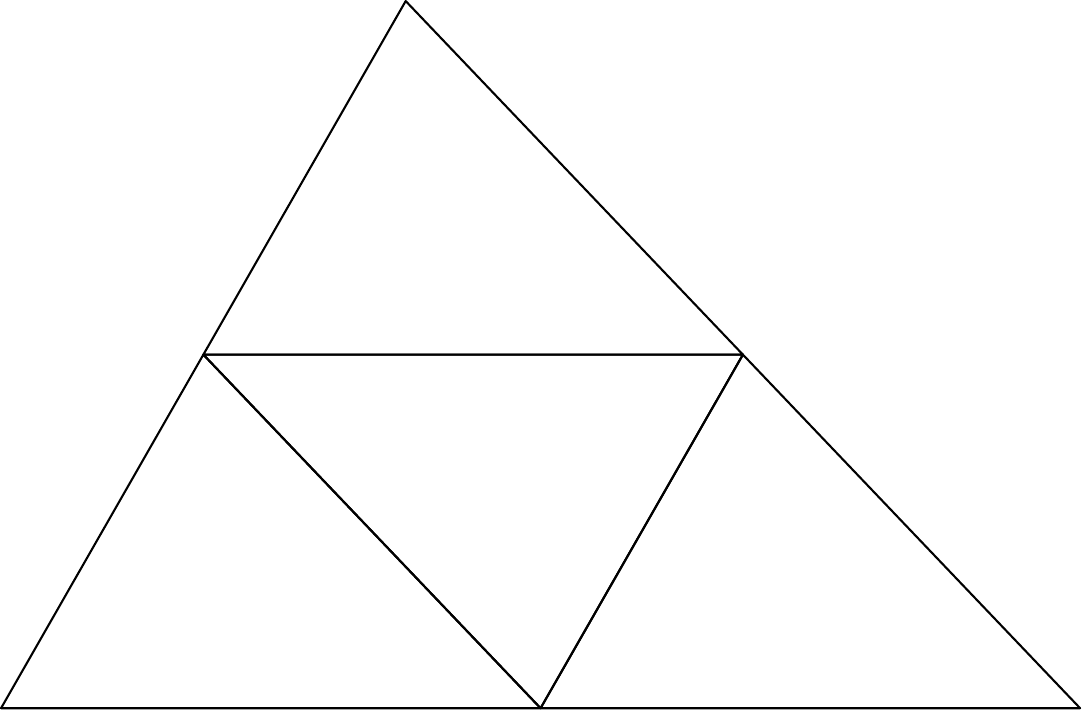}%
    \end{subfigure}\hfill%
    \begin{subfigure}{0.25\linewidth}
      \def\svgwidth{.95\textwidth}
        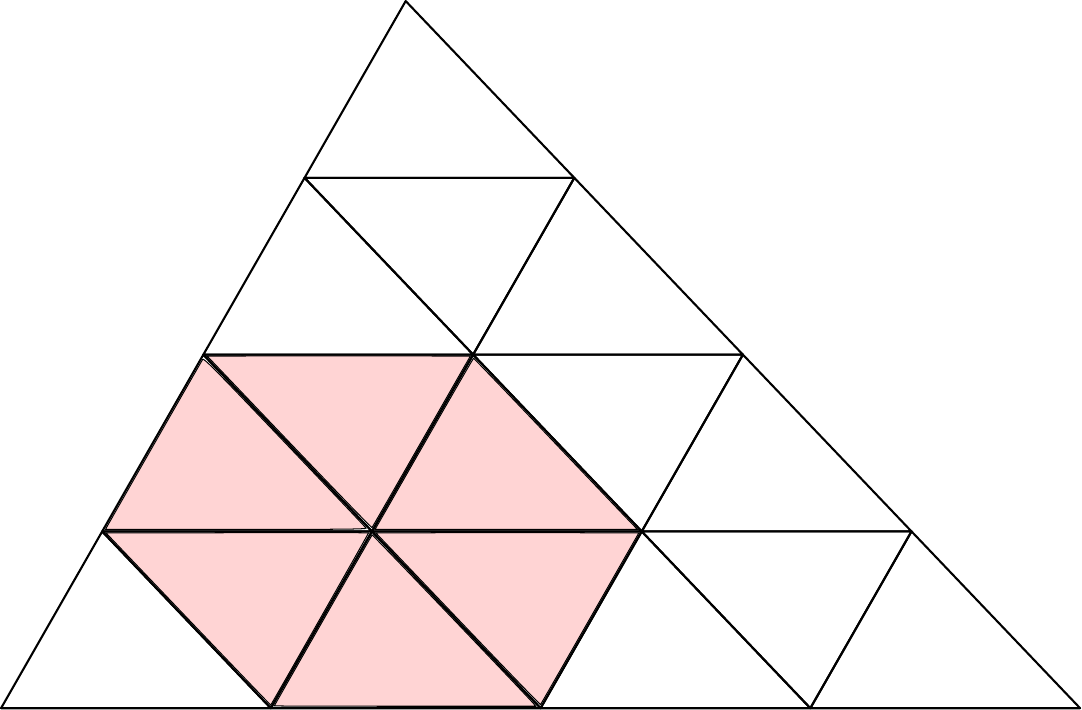%
    \end{subfigure}\hfill%
    \begin{subfigure}{0.25\linewidth}
        \includegraphics[width=.95\textwidth]{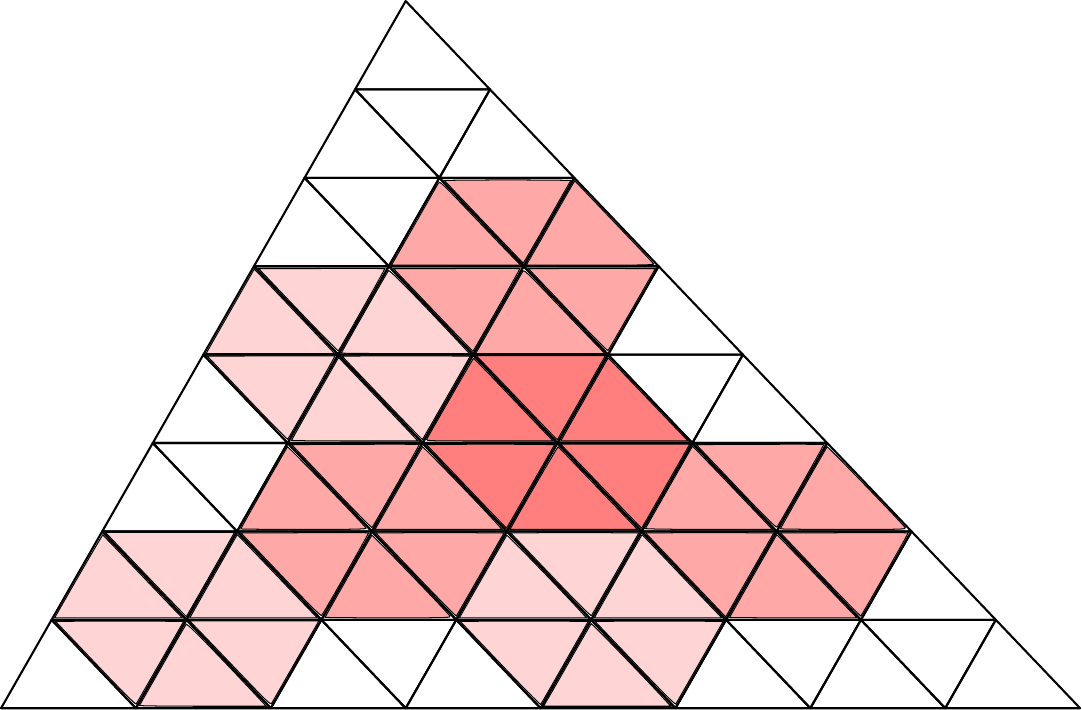}%
    \end{subfigure}\hfill%
    \begin{subfigure}{0.25\linewidth}
        \includegraphics[width=.95\textwidth]{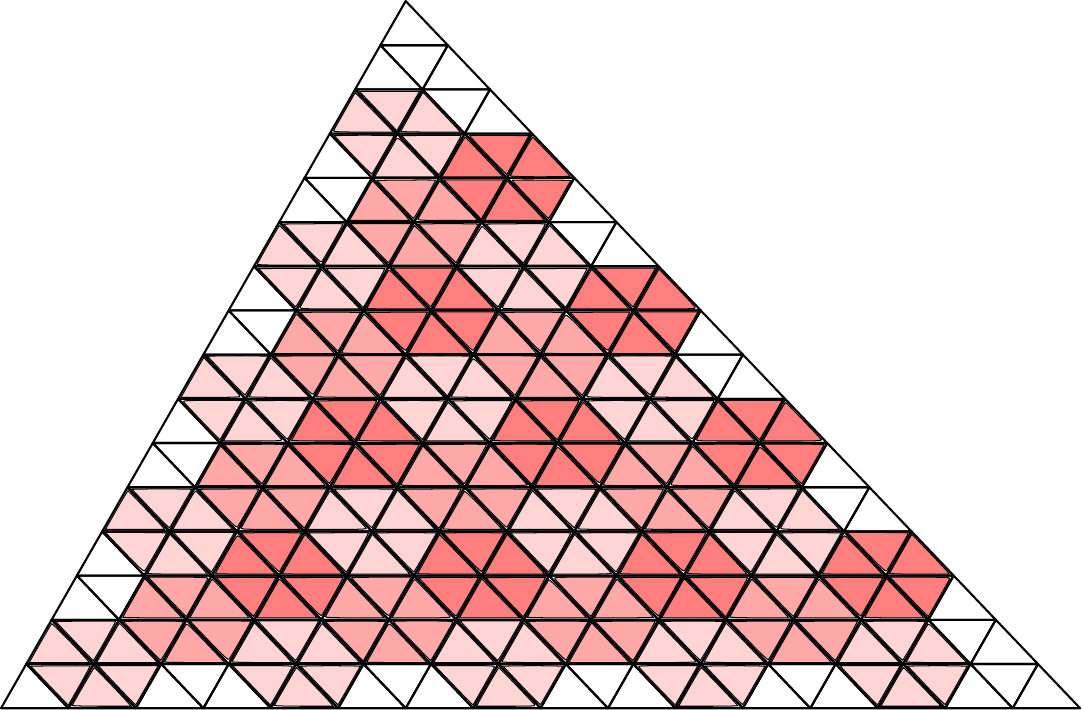}%
    \end{subfigure}
    \caption{Red refinement of a triangular macro element. Hexagonal groups of six triangles are shaded. Boundary triangles are still white. In the second figure included is a sketch of a symmetric pair of triangles $(T_1,T_2)$ connected in the vertex $\xBar_0$.}\label{fig:red_refinement}
    \figurealt{From left to right, one red-refinement step divides the macro triangle into four triangles. After two steps, one shaded six-triangle patch contains the labelled symmetric pair T one and T two meeting at x zero. After three and four steps, shaded six-triangle patches cover the interior, while unshaded triangles remain near the boundary.}
\end{figure}

\begin{remark}\label{rem:symmetric-boundary-triangulation}
  Since red refinement is based on dividing all edges of the triangles in half, the boundary $\partial\M$ of a macro element refined in that way is composed of equal length line segments in each boundary patch.
  Whenever this property holds on all macro-element edges, we say that the refined triangulation has a uniform induced boundary triangulation.
  Consecutive boundary intervals can then be paired on each macro edge, with at most one unpaired interval per macro edge.
  This is an additional boundary condition and is not part of the definition of a symmetric triangulation.
\end{remark}

\begin{definition}\label{def:symmetric-triangulation}
  We call a triangulation $\TriBar_{h,\M}$ \emph{symmetric} if it admits a
  decomposition into symmetric pairs and unpaired boundary elements as in
  \cref{lem:existence-of-symmetric-triangles}.  A global triangulation
  $\TriBar_h$ is also called \emph{symmetric} if all its sub-triangulations
  $\TriBar_{h,\M}$ for $\M\in\TriBar$ are symmetric.
\end{definition}

In the red refinement strategy with global refinement of all elements simultaneously, the global pattern can directly be extracted from the macro element pattern, since neighbouring macro elements have the refinement points in common.

\begin{figure}[ht]
    \begin{subfigure}{0.166\linewidth}
        \includegraphics[width=\textwidth]{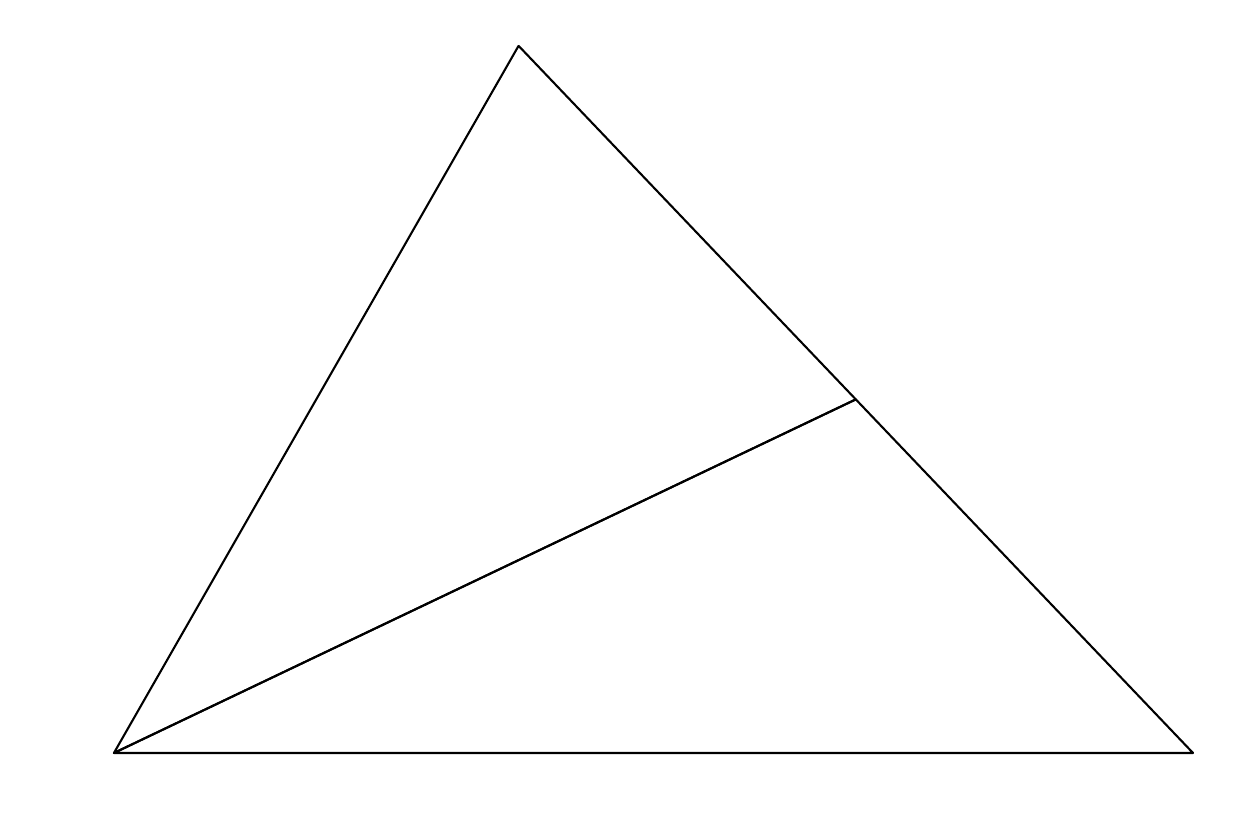}%
    \end{subfigure}\hfill%
    \begin{subfigure}{0.166\linewidth}
        \includegraphics[width=\textwidth]{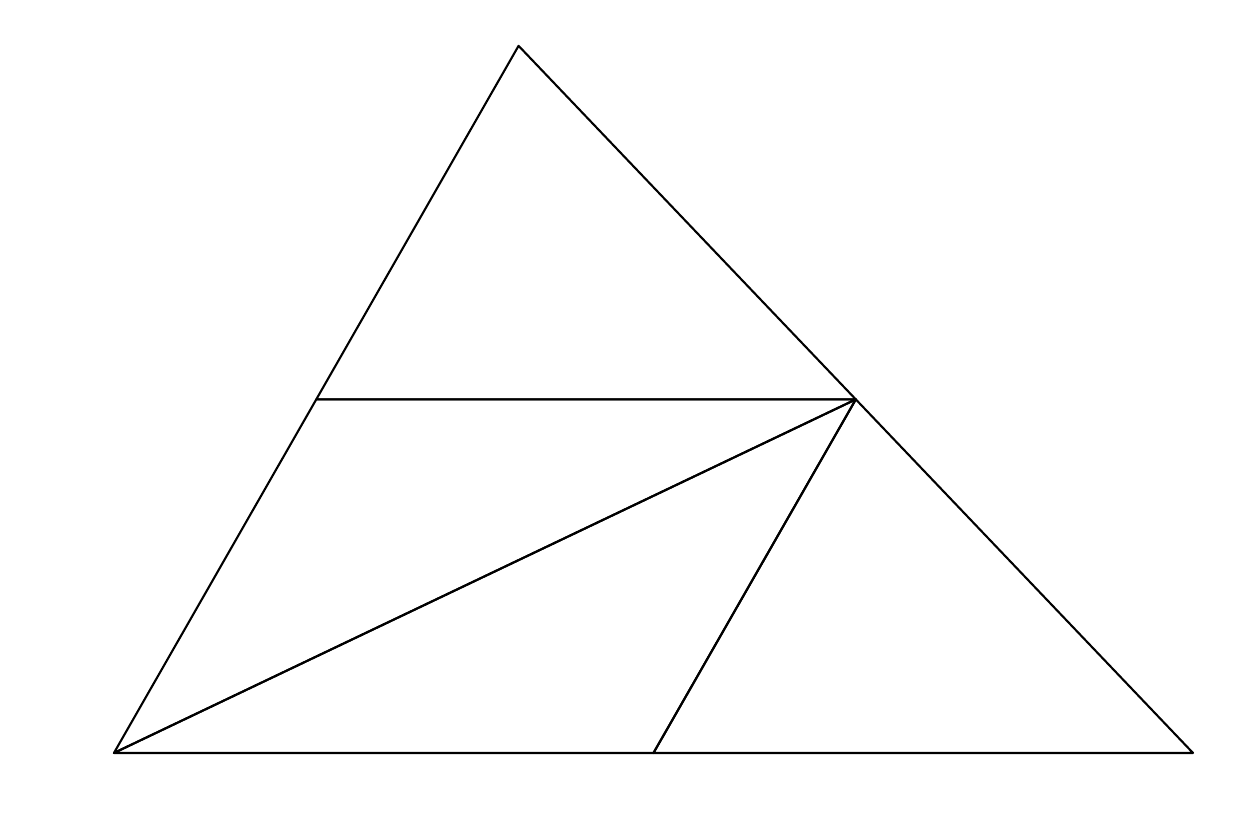}%
    \end{subfigure}\hfill%
    \begin{subfigure}{0.166\linewidth}
        \includegraphics[width=\textwidth]{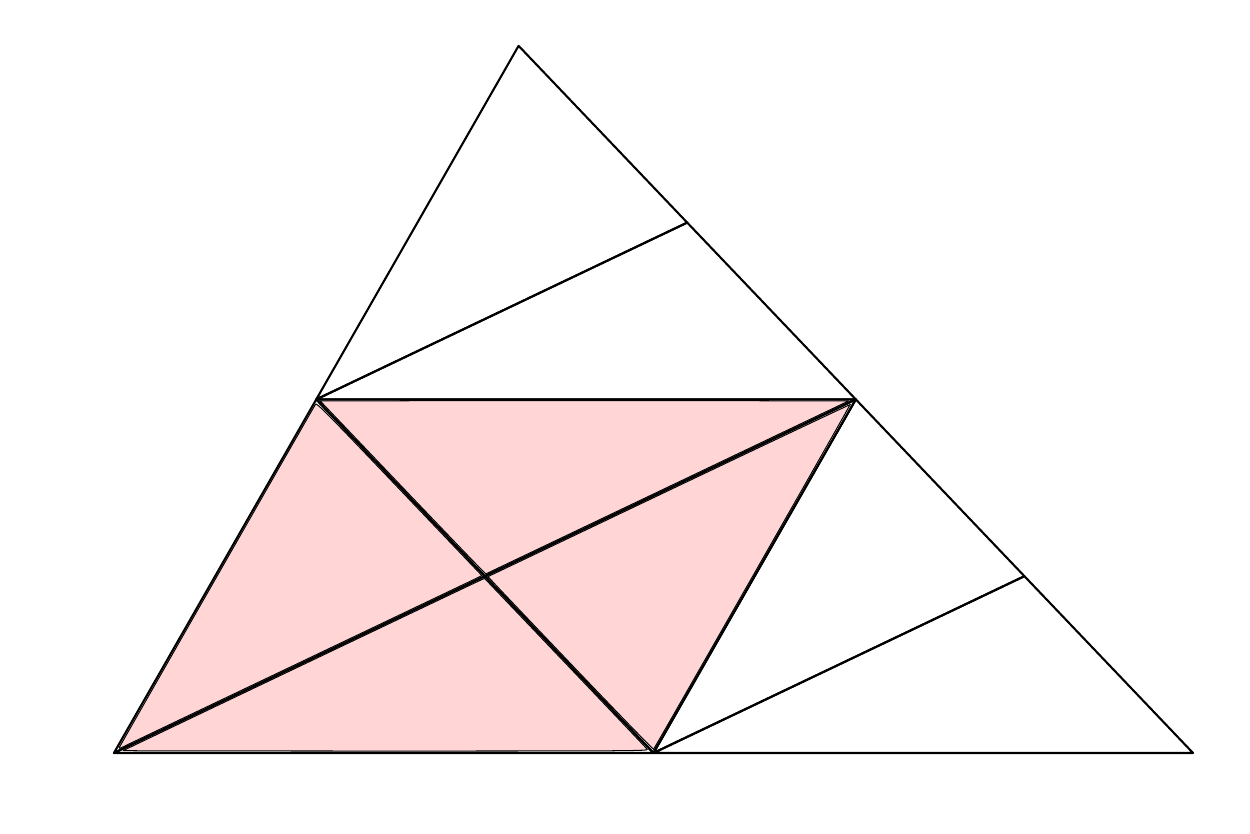}%
    \end{subfigure}\hfill%
    \begin{subfigure}{0.166\linewidth}
        \includegraphics[width=\textwidth]{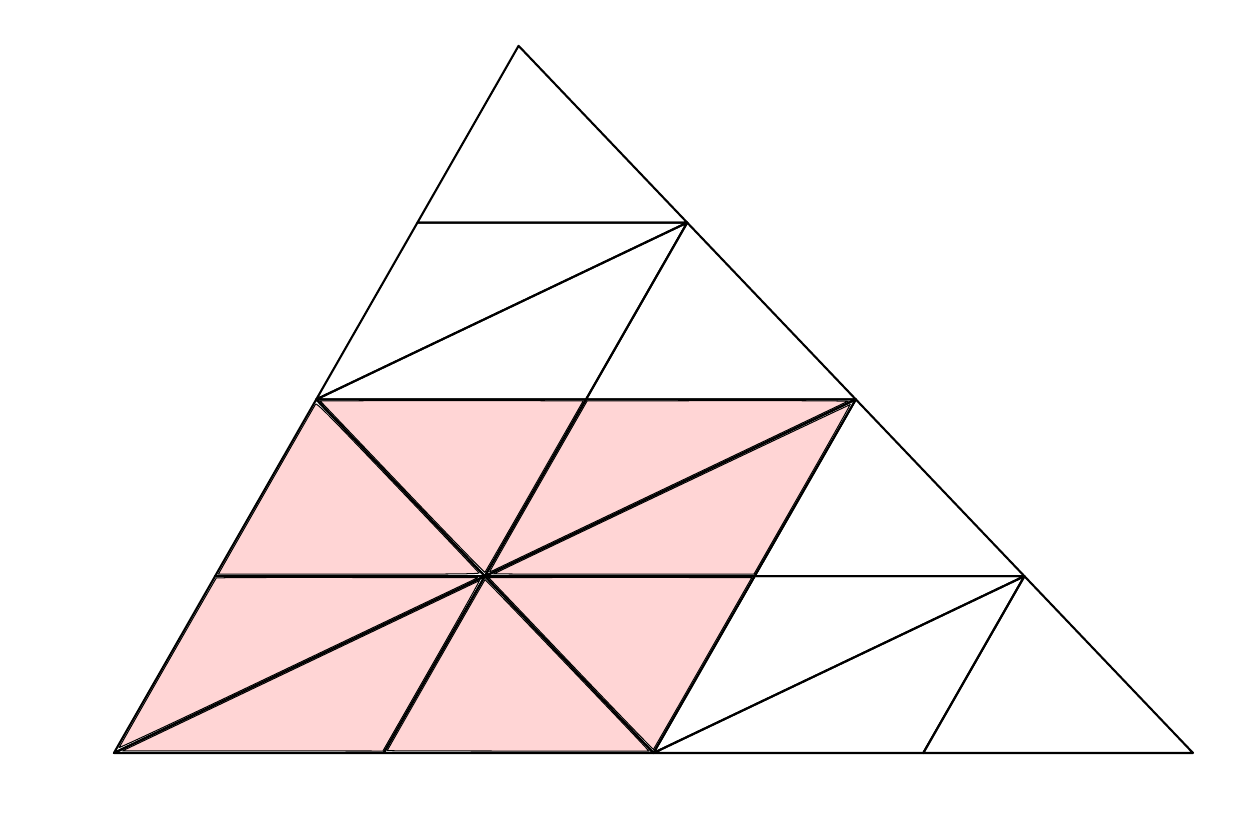}%
    \end{subfigure}\hfill%
    \begin{subfigure}{0.166\linewidth}
        \includegraphics[width=\textwidth]{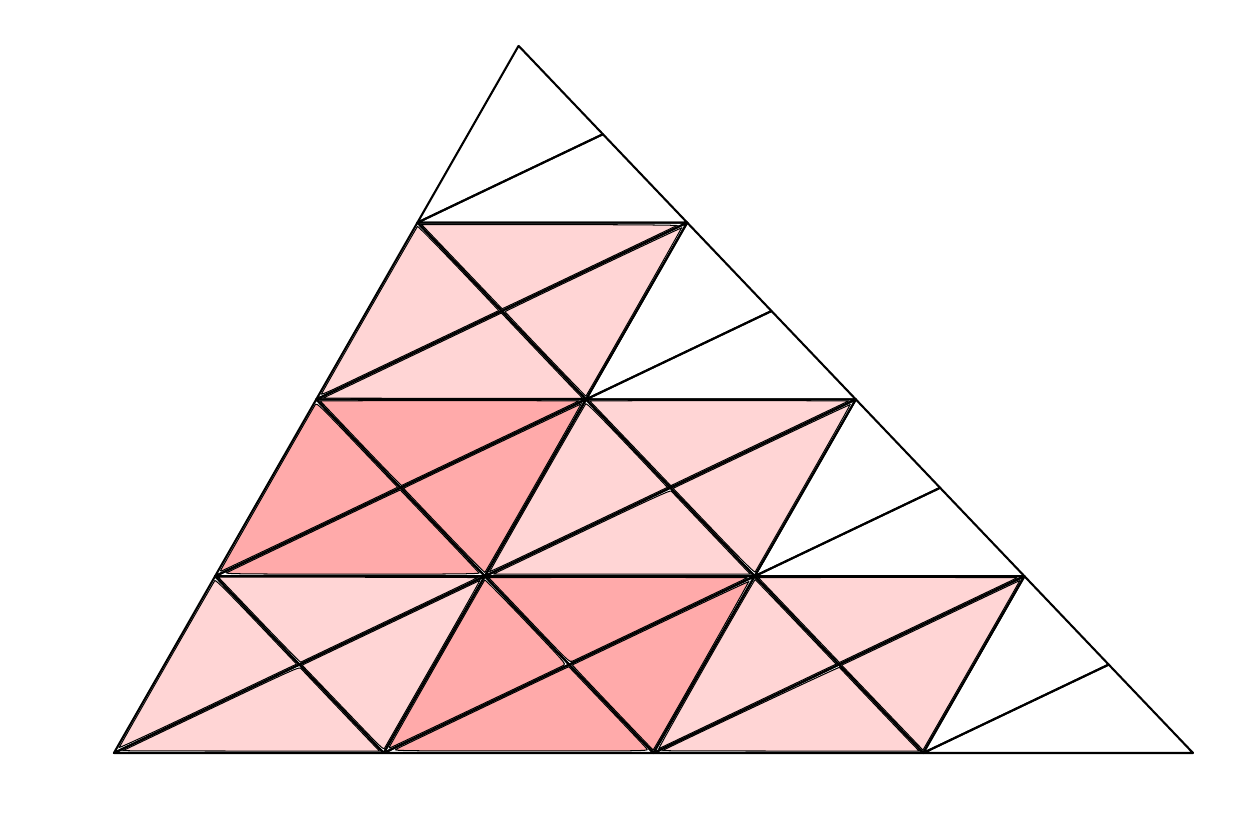}%
    \end{subfigure}\hfill%
    \begin{subfigure}{0.166\linewidth}
        \includegraphics[width=\textwidth]{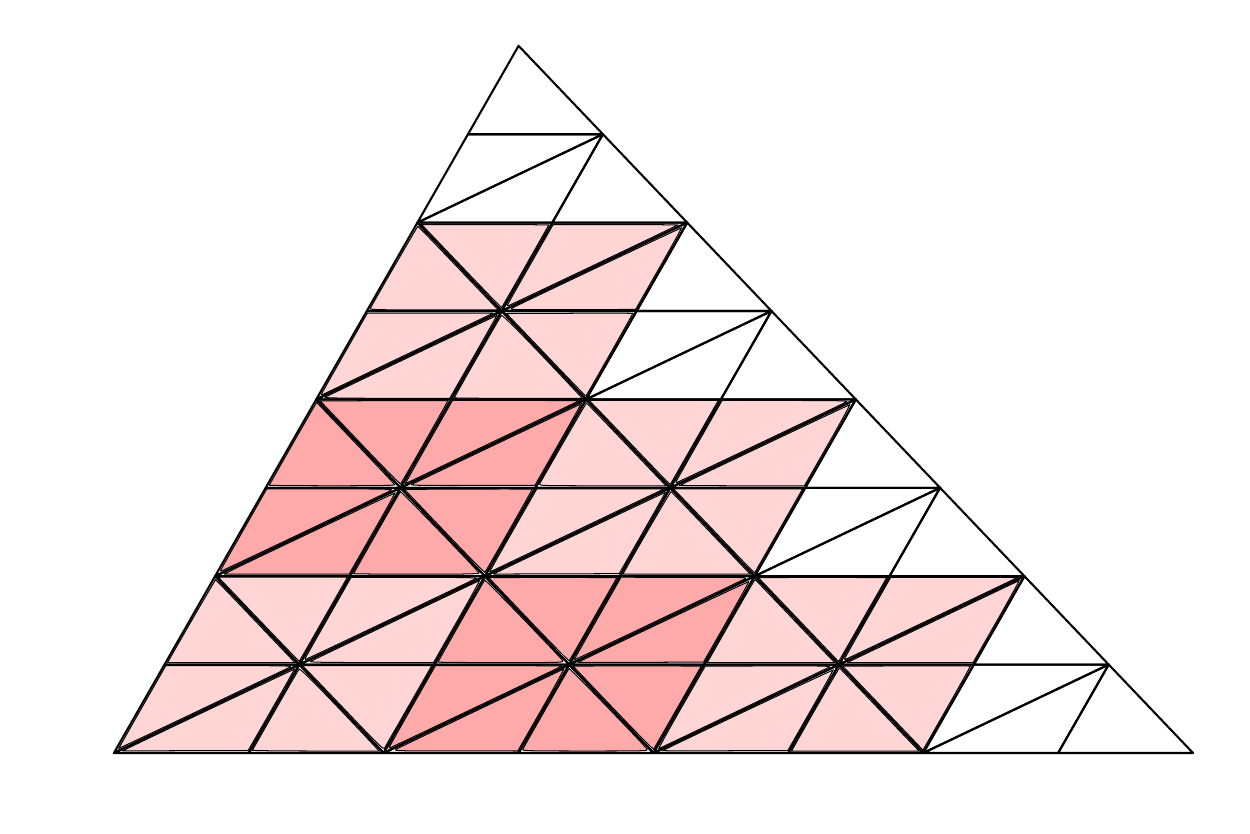}%
    \end{subfigure}
    \caption{Newest vertex bisection of a triangular macro element. Quadrilateral groups of four or eight triangles are shaded.}\label{fig:newest_vertex_bisection}
    \figurealt{From left to right, one through six uniform newest-vertex bisection steps produce two, four, eight, sixteen, thirty-two and sixty-four triangles. Beginning with the third panel, shaded groups of four or eight triangles identify local symmetric pairs, while unshaded triangles remain near the macro-element boundary.}
\end{figure}

\begin{remark}[Newest-vertex bisection]\label{rem:nvb-symmetric-pairs}
  Newest-vertex bisection \citep{Sewell1972Automatic,Bansch1991Local} is another
  common refinement strategy
  \citep[see also][]{SchmidtSiebert2005Alberta,ADKN2016DuneALUGrid}.  It can produce local
  symmetric pair structures on a single triangular macro element when the
  initial marked edge and the inherited newest vertices are fixed in a
  compatible way.  For example, in \cref{fig:newest_vertex_bisection}
  after three uniform refinement steps one
  observes groups of four triangles sharing the midpoint of the initial
  refinement edge.  These four triangles form a quadrilateral that can be
  partitioned into two symmetric pairs.  Subsequent uniform bisections generate
  related local quadrilateral groups among the finitely many similarity classes
  of newest-vertex bisection refinements \citep[cf.][]{Bansch1991Local}.

  We do not use this observation as a general construction of symmetric
  triangulations.  In a global conforming triangulation, the initial markings,
  node numbering, and closure refinements near macro-element interfaces can
  disrupt the simple macro-element-wise pattern.  In particular, unpaired
  elements may occur away from $\partial\M$.  Although such elements are often
  expected to form a small set in structured implementations, no general bound
  is needed for the results below and none is claimed here.
\end{remark}

\begin{remark}
  An alternative refinement strategy is the longest-edge bisection \citep{Rivara1984Mesh}. Unlike uniform red refinement, it generally does not yield a symmetric pattern of element pairs covering the interior of each macro element, since the refinement direction depends dynamically on the geometry and aspect ratio of the macro triangle. As a consequence, no global symmetry comparable to that of red refinement arises directly from this strategy.
\end{remark}

\section{Interpolation errors}\label{sec:interpolation_errors}

Following \citet{Chien1993Piecewise,Chien1995Numerical}, the symmetric structure of the triangulation can be exploited to obtain sharper integral approximation estimates, compared to \cref{lem:surface-approximation-estimates} and standard interpolation estimates. We will present this argument in more detail and extend the findings to a more general setting of geometric quantities in the following subsections.

Throughout this section we use the parametrization from
\cref{ass:admissible-macro-parametrization} and the shape-regular,
quasi-uniform refinements from \cref{ass:admissible-triangulation}.
The standard interpolation estimates do not require any further mesh structure.
The improved integral estimates assume symmetric triangulations in the sense of
\cref{def:symmetric-triangulation}. For the estimates in which boundary terms
from first derivatives have to cancel, we also assume the uniform induced
boundary triangulation described in \cref{rem:symmetric-boundary-triangulation}.
These hypotheses are stated in the local estimates where they are used.

\subsection{Interpolation errors over a single macro element}

In this subsection we study interpolation errors on a single flat macro element (macro patch) $\M$ with symmetric triangulation.
Our main goal is to show that, for even interpolation order, symmetry-induced cancellation in Taylor expansions improves the standard interpolation estimates by one order, and in some cases by two orders. We also extend these improved bounds to derivatives of the interpolated functions.
Throughout this subsection, $C$ denotes a generic constant independent of the mesh size $h$.

We begin by recalling standard interpolation estimates.

\begin{lemma}\label{lem:standard-flat-interpolation-estimate}
  Let $\M\in\TriBar_{h_0}$ be a macro element, let $\bar{f}\in C^{\order+2}(\M;\R)$ with $\order\geq 1$, and let $\TriM_{h,\M}$ be a triangulation of $\M$ with $h<h_0$ sufficiently small. Consider the piecewise Lagrange interpolation $\bar{f}_h = \IBar_h^\order(\bar{f})\in C^0(\M;\R)$. Then, for every multi-index $\vec{\beta}\in\mathbb{N}^{2}_{0}$ with $0\leq|\vec{\beta}|\leq \order+1$, the standard interpolation estimates hold:
  \begin{align}
		\|\DM^{\vec{\beta}}\bar{f}_h - \DM^{\vec{\beta}}\bar{f}\|_{L^p(\M)} &\leq C h^{\order+1-|\vec{\beta}|} |\bar{f}|_{W^{\order+1,p}(\M)},\qquad 1\leq p<\infty, \\
		\max_{\T\in\TriBar_{h,\M}} \|\DM^{\vec{\beta}}\bar{f}_h - \DM^{\vec{\beta}}\bar{f}\|_{L^\infty(\T)} &\leq C h^{\order+1-|\vec{\beta}|} |\bar{f}|_{C^{\order+1}(\M)}.
  \end{align}
\end{lemma}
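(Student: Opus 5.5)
The estimate is the classical Bramble--Hilbert / scaling argument, applied element by element on $\TriM_{h,\M}$ and then summed. I would work on a single fine element $\T\in\TriBar_{h,\M}$ with its affine map $F_{\T}\colon\TRef\to\T$, $F_{\T}(\xRef)=B_{\T}\xRef+b_{\T}$. By \cref{ass:admissible-triangulation}, uniform shape regularity and quasi-uniformity give
\[
  \|B_{\T}\|\le Ch, \qquad \|B_{\T}^{-1}\|\le Ch^{-1}, \qquad |\det B_{\T}|\simeq h^2 .
\]
Set $\hat f\colonequals\bar f\circ F_{\T}$. By \cref{def:surface-function-interpolation}, the interpolant satisfies $\bar f_h\circ F_{\T}=\IRef^\order\hat f$. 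The local interpolation error is therefore $\hat e\colonequals\hat f-\IRef^\order\hat f$ on the fixed reference triangle.

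On $\TRef$, the operator $\Id-\IRef^\order$ annihilates $\mathbb{P}_\order$. It is also bounded from $W^{\order+1,p}(\TRef)$ into $W^{\order+1,p}(\TRef)$: point evaluations are continuous there by the Sobolev embedding $W^{\order+1,p}\hookrightarrow C^0$, valid in two dimensions since $\order+1\ge2$. The Bramble--Hilbert lemma then gives
\[
  |\hat e|_{W^{|\vec\beta|,p}(\TRef)}\le C|\hat f|_{W^{\order+1,p}(\TRef)}, \qquad 0\le|\vec\beta|\le\order+1 .
\]
Next I transform back with the chain rule for affine maps, using $|\hat f|_{W^{\order+1,p}(\TRef)}\le C\|B_{\T}\|^{\order+1}|\det B_{\T}|^{-1/p}|\bar f|_{W^{\order+1,p}(\T)}$ and the analogous bound for $|\cdot|_{W^{|\vec\beta|,p}(\T)}$ involving $\|B_{\T}^{-1}\|^{|\vec\beta|}|\det B_{\T}|^{1/p}$. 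The Jacobian factors cancel, and this yields
\[
  \|\DM^{\vec\beta}(\bar f_h-\bar f)\|_{L^p(\T)}\le Ch^{\order+1-|\vec\beta|}|\bar f|_{W^{\order+1,p}(\T)} .
\]
For the case $|\vec\beta|=\order+1$ I would note that $\DM^{\vec\beta}\bar f_h=0$ on each element, so that case is trivial. All derivatives here are understood elementwise, since $\bar f_h$ is only $C^0$ across element edges.

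For $p<\infty$, I would raise the local estimate to the $p$-th power and sum over $\T\in\TriBar_{h,\M}$. The elements partition $\M$ up to null sets, so the sum of the local seminorms gives $|\bar f|_{W^{\order+1,p}(\M)}^p$, and this is the first estimate. For $p=\infty$, the same reference argument applies with the $L^\infty$ and $W^{\order+1,\infty}$ norms; there the determinant factor is absent. This gives the elementwise bound with $|\bar f|_{W^{\order+1,\infty}(\T)}\le|\bar f|_{C^{\order+1}(\M)}$, which holds because $\bar f\in C^{\order+2}$. Taking the maximum over elements completes the proof.

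No step is a real obstacle. The only point requiring care is that the constants depend solely on $\TRef$, $\order$, $p$ and the shape-regularity constants. This holds because every element is mapped from the same reference triangle, and the Bramble--Hilbert constant is therefore uniform in $h$.
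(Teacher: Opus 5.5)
Your proposal is correct and is essentially the paper's approach: the paper proves this lemma only by citing Ern--Guermond (Corollary 19.8), and that result rests on the same reference-element Bramble--Hilbert bound plus affine scaling and elementwise summation that you reproduce. Your handling of the details (the embedding $W^{\order+1,p}\hookrightarrow C^0$ in two dimensions, including the borderline case $p=1$, and the trivial case $|\vec\beta|=\order+1$) is sound; only shape regularity is actually needed for the local bound, with quasi-uniformity being harmless extra.
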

\begin{proof}
  See, e.g., \citet[Corollary 19.8]{ErnGuermond2021FiniteElementeI}.
\end{proof}

If one is interested in the integrals of differences instead of norms, and the triangulation is symmetric, the estimates can be improved, whenever the interpolation order minus the derivative order is an even number:

\begin{theorem}\label{thm:flat-interpolation-estimate}
  Let $\M\in\TriBar_{h_0}$ be a macro element, let $\bar{f}\in C^{\order+2}(\M;\R)$ with $\order\geq 1$, and let $\TriM_{h,\M}$ be a symmetric triangulation of $\M$ with $h<h_0$ in the sense of \cref{def:symmetric-triangulation}. Consider the piecewise Lagrange interpolation $\bar{f}_h= \IBar_h^\order(\bar{f})\in C^0(\M;\R)$. If $\vec{\beta}\in\mathbb{N}^{2}_{0}$ satisfies $0\leq|\vec{\beta}|\leq \order+2$ and $\order+2-|\vec{\beta}|$ is \emph{even}, then the following improved estimate holds:
  \begin{align}
    \left|\sum_{\T\in\TriM_{h,\M}}
    \int_{\T} (\DM^{\vec{\beta}}\bar{f}_h - \DM^{\vec{\beta}}\bar{f})\,\dM\right| & \leq C h^{\order+2-|\vec{\beta}|} \|\bar{f}\|_{C^{\order+2}(\M)}.
  \end{align}
\end{theorem}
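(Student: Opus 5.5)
We split the sum into the symmetric pairs of $\mathcal A_{h,\M}$ and the unpaired boundary elements of $\mathcal B_{h,\M}$ from \cref{def:symmetric-triangulation}, and treat the two parts separately.

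\textbf{Boundary remainder.} By \cref{lem:existence-of-symmetric-triangles}, $|\mathcal B_{h,\M}|\le Ch^{-1}$, and each element has area $\le Ch^2$. On each such element, \cref{lem:standard-flat-interpolation-estimate} gives the pointwise bound $h^{\order+1-|\vec\beta|}$. The boundary sum is therefore at most $Ch^{-1}h^2h^{\order+1-|\vec\beta|}=Ch^{\order+2-|\vec\beta|}$, which is already the target order. For $|\vec\beta|=\order+2$ the pointwise estimate is unavailable. In that case $\DM^{\vec\beta}\bar f_h=0$, since $\bar f_h$ is a polynomial of degree $\order$, so each integrand is bounded by $|\bar f|_{C^{\order+2}}$. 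The boundary contribution is then $O(h)$, which is not $O(1)$-sharp but is better than needed. The interior pairs are handled trivially in this case as well, so the whole sum is bounded by $|\M|\,\|\bar f\|_{C^{\order+2}}$.

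\textbf{Symmetric pairs.} Fix a pair $(\T_1,\T_2)$ with center $\xBar_0$ and maps $F_{\T_i}(\xRef)=\xBar_0\pm B\xRef$. Taylor expand $\bar f$ about $\xBar_0$:
\[
\bar f(\xBar_0+y)=p_{\order}(y)+q_{\order+1}(y)+R(y),\qquad |\DM^{\vec\gamma}R(y)|\le C|y|^{\order+2-|\vec\gamma|}\|\bar f\|_{C^{\order+2}},
\]
where $p_\order$ has degree $\le\order$ and $q_{\order+1}$ is homogeneous of degree $\order+1$. Interpolation reproduces $p_\order$, so the error on $\T_i$ equals $(\IBar^\order-\mathrm{Id})(q_{\order+1}+R)$. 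The $R$-part contributes at most $Ch^{\order+2-|\vec\beta|}$ pointwise, by the standard interpolation estimate applied to a function whose $(\order+1)$-st derivatives are $O(h)$, together with the direct bound on $\DM^{\vec\beta}R$. Summed over $O(h^{-2})$ pairs of area $O(h^2)$, this gives the target bound.

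\textbf{Cancellation of the leading term (main step).} Pull back to $\TRef$. On $\T_1$ we have $q_{\order+1}(B\xRef)=:g(\xRef)$. On $\T_2$ we have $q_{\order+1}(-B\xRef)=(-1)^{\order+1}g(\xRef)$ by homogeneity. The interpolation commutes with this pullback because both elements use the same reference nodes $\xRef_i$. Hence the error on $\T_2$ at $F_{\T_2}(\xRef)$ is $(-1)^{\order+1}e(\xRef)$, where $e:=\IRef^\order g-g$ is the error on $\T_1$ at $F_{\T_1}(\xRef)$.

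Now differentiate in $\xBar$. Since $\DM F_{\T_2}=-\DM F_{\T_1}$, each derivative contributes an extra factor $-1$. Thus
\[
\DM^{\vec\beta}(\text{err})\circ F_{\T_2}=(-1)^{\order+1+|\vec\beta|}\,\DM^{\vec\beta}(\text{err})\circ F_{\T_1}.
\]
Both elements have the same Jacobian determinant $|\det B|$. When $\order+2-|\vec\beta|$ is even, $\order+1+|\vec\beta|$ is odd, so the two integrals cancel exactly.

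The main obstacle is carrying out this sign bookkeeping carefully. In particular one must check that the Lagrange interpolant transforms correctly under the reflection. This holds because we use the same reference map composed with $-\mathrm{Id}$, rather than requiring that the node set itself be point-symmetric. One must also confirm that $C^{\order+2}$ regularity suffices for the remainder bound in the derivative case $|\vec\beta|=\order+1$. Combining the interior and boundary estimates yields the claim.
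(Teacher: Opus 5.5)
Your proposal is correct and follows essentially the paper's argument: you split into symmetric pairs and $O(h^{-1})$ unpaired boundary elements, cancel the degree-$(\order+1)$ Taylor term about $\xBar_0$ on each pair through the sign $(-1)^{\order+1+|\vec{\beta}|}$ under the reflection, and bound the Taylor remainder and the boundary elements by standard interpolation estimates. Using polynomial reproduction and the homogeneity of $q_{\order+1}$ instead of the paper's nodal-basis expansion gives exactly the same leading term, and the case $|\vec{\beta}|=\order+1$ you flag at the end is excluded by the parity condition.
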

\begin{proof}
If $|\vec{\beta}|=\order+2$, then $\DM^{\vec{\beta}}\bar{f}_h=0$ on each
element and the estimate follows immediately, since the right-hand side has
order $h^0$. We therefore consider $|\vec{\beta}|\leq\order+1$ below.

We start with $|\vec{\beta}|=0$.
In each element $\T\in\TriBar_{h,\M}$, we use a nodal basis expression for $\bar{f}_h$ and a Taylor expansion for $\bar{f}$ to obtain the pointwise representation of the error
 \begin{align*}
		\bar{f}_h(\xM) - \bar{f}(\xM) = \sum_{i=1}^{n_\order} \phiBar_i(\xM)\left( \sum_{|\vec{\alpha}|=\order+1} \frac{1}{\vec{\alpha}!} (\xM_i - \xM)^{\vec{\alpha}} \DM^{\vec{\alpha}}\bar{f}(\xM) + \sum_{|\vec{\alpha}|=\order+2} \frac{1}{\vec{\alpha}!} (\xM_i - \xM)^{\vec{\alpha}} \DM^{\vec{\alpha}}\bar{f}(\bar{y}^{(i)})\right),
 \end{align*}
 where $\phiBar_i = \phi^\order_{i}\circ F_{\T}^{-1}$ is the $i$th Lagrange basis function on $\T$ associated to the Lagrange node $\xM_i=F_{\T}(\xRef_{i,\order})$, and $\bar{y}^{(i)}\in\T$.

  Now we consider symmetric pairs of triangles, i.e., we assume that there is a mirrored triangle $\T'\in\TriBar_{h,\M}$ associated with $\T$ as in \cref{def:symmetric-triangles}. Without loss of generality, the common point is $\xM_0$. We write
  \[
	  \DM^{\vec{\alpha}}\bar{f}(\xM) = \DM^{\vec{\alpha}}\bar{f}(\xM_0) + \sum_{\beta=1}^d (\xM - \xM_0)^{(\beta)} \DM^{\vec{\alpha},\beta}\bar{f}(\bar{y})
  \]
	  for some $\bar{y}\in \T\cup \T'$, by another linear Taylor expansion around $\xM_0$.
  For $|\vec{\alpha}|=\order+1$ (which is odd), employing the symmetry property of \cref{def:symmetric-triangles}, we observe that
  \[
    \int_{\T} \phiBar_i(\xM)(\xM_i - \xM)^{\vec{\alpha}}\,\dM = -\int_{\T'} \phiBar'_i(\xM')(\xM'_i - \xM')^{\vec{\alpha}}\,\dM',
  \]
  with $\phiBar'_i = \phi^\order_{i}\circ F_{\T'}^{-1}$ and $\xM'_i=F_{\T'}(\xRef^\order_{i})$,
  since the triangles are symmetric, the basis functions $\phiBar_i$ (considered on the pair of elements) are even functions, while the Taylor monomial $(\xM_i - \xM)^{\vec{\alpha}}$ is odd. The sign flip comes from the mirrored coordinates.

  Thus, the $|\vec{\alpha}|=\order+1$ terms cancel in the integral over the union of the two triangles, and we obtain the estimate from the remainder term
  \begin{align*}
	    \left|\int_{\T\cup\T'} (\bar{f}_h(\xM) - \bar{f}(\xM))\,\dM\right| &\leq C h^{\order+2} \|\bar{f}\|_{C^{\order+2}(\T\cup\T')}(|\T| + |\T'|).
  \end{align*}
  For a single triangle, the $\order+1$ term remains, and we obtain the weaker estimate
  \begin{align*}
	    \left|\int_{\T} (\bar{f}_h(\xM) - \bar{f}(\xM))\,\dM\right| &\leq C h^{\order+1} \|\bar{f}\|_{C^{\order+1}(\T)} |\T|.
  \end{align*}

  As the unpaired triangles in $\M$ are located only at the boundary $\partial\M$, their number is of order $h^{-1}$, the number of symmetric pairs is of order $h^{-2}$, and the area scales like $|\T|\leq ch^2$, compare \cref{lem:existence-of-symmetric-triangles}, we indeed obtain order $h^{\order+2}$ for the integral over $\M$.

For derivatives, we apply the same argument to the differentiated leading
term. On a symmetric pair let $R\xM\colonequals2\xM_0-\xM$, so that
$R\T=\T'$. For mirrored Lagrange nodes and basis functions,
$\xM_i'=R\xM_i$ and $\phiBar_i'(R\xM)=\phiBar_i(\xM)$. Hence,
for $|\vec{\alpha}|=\order+1$,
\[
  D_y^{\vec{\beta}}
  \big(\phiBar_i'(y)(\xM_i'-y)^{\vec{\alpha}}\big)\big|_{y=R\xM}
  =
  (-1)^{|\vec{\alpha}|-|\vec{\beta}|}
  \DM^{\vec{\beta}}
  \big(\phiBar_i(\xM)(\xM_i-\xM)^{\vec{\alpha}}\big).
\]
Thus, the differentiated leading terms cancel over $\T\cup\T'$ whenever
$\order+2-|\vec{\beta}|$ is even. Taylor's formula with integral remainder
and affine scaling give
\[
  \DM^{\vec{\beta}}(\bar f_h-\bar f)
  =
  \sum_{i=1}^{n_\order}
  \sum_{|\vec{\alpha}|=\order+1}
  \frac{\DM^{\vec{\alpha}}\bar f(\xM_0)}{\vec{\alpha}!}
  \DM^{\vec{\beta}}
  \left(\phiBar_i(\xM)(\xM_i-\xM)^{\vec{\alpha}}\right)
  +\mathcal R_{\order+2-|\vec{\beta}|},
\]
where
\[
  \|\mathcal R_{\order+2-|\vec{\beta}|}\|_{L^\infty(\T\cup\T')}
  \le
  Ch^{\order+2-|\vec{\beta}|}
  \|\bar f\|_{C^{\order+2}(\T\cup\T')}.
\]
The paired elements are therefore estimated using this cancellation, while
the unpaired boundary elements are estimated by the standard interpolation
bound. Summing them as above proves the assertion.
\end{proof}

We can generalize \cref{thm:flat-interpolation-estimate} by introducing a weight function in the integrals:

\begin{corollary}\label{cor:weighted-flat-interpolation-estimate}
  Let the assumptions of \cref{thm:flat-interpolation-estimate} hold, let $\bar{g}\in W^{1,1}(\M)$, and let $\vec{\beta}\in\mathbb{N}_0^2$ satisfy $0\le |\vec{\beta}|\le \order+2$ with $\order+2-|\vec{\beta}|$ even. Then
  \begin{align}
    \left|\sum_{\T\in\TriM_{h,\M}}
    \int_{\T} \bar{g} \cdot (\DM^{\vec{\beta}}\bar{f}_h - \DM^{\vec{\beta}}\bar{f})\,\dM\right| & \leq C h^{\order+2-|\vec{\beta}|} \|\bar{g}\|_{W^{1,1}(\M)}\|\bar{f}\|_{C^{\order+2}(\M)}.
  \end{align}
\end{corollary}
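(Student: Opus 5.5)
The idea is to rerun the pairwise cancellation from the proof of \cref{thm:flat-interpolation-estimate} with the weight frozen at a point of each symmetric pair. The error made by freezing $\bar{g}$ is paid for with one derivative of $\bar{g}$ and one factor of $h$. As before, the case $|\vec{\beta}|=\order+2$ is trivial: $\DM^{\vec{\beta}}\bar{f}_h=0$, so the left-hand side is at most $\|\bar{g}\|_{L^1(\M)}\|\bar{f}\|_{C^{\order+2}(\M)}$. We may therefore assume $|\vec{\beta}|\le\order+1$, and we write $e\colonequals\DM^{\vec{\beta}}\bar{f}_h-\DM^{\vec{\beta}}\bar{f}$.

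Split the sum into symmetric pairs $P=\T\cup\T'$ from the decomposition of \cref{def:symmetric-triangulation} and the unpaired boundary elements in $\mathcal B_{h,\M}$. On a pair $P$, let $\bar{g}_P$ be the mean value of $\bar{g}$ over $P$ and write
\[
  \int_P \bar{g}\, e\,\dM = \bar{g}_P\int_P e\,\dM + \int_P (\bar{g}-\bar{g}_P)\, e\,\dM .
\]
For the first term, the proof of \cref{thm:flat-interpolation-estimate} gives $|\int_P e\,\dM|\le Ch^{\order+2-|\vec{\beta}|}\|\bar{f}\|_{C^{\order+2}(P)}|P|$, because the leading Taylor terms cancel exactly over the pair. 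Since $|\bar{g}_P|\,|P|\le\|\bar{g}\|_{L^1(P)}$, summing over pairs bounds this contribution by $Ch^{\order+2-|\vec{\beta}|}\|\bar{g}\|_{L^1(\M)}\|\bar{f}\|_{C^{\order+2}(\M)}$.

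For the second term, use the standard pointwise bound $\|e\|_{L^\infty(P)}\le Ch^{\order+1-|\vec{\beta}|}\|\bar{f}\|_{C^{\order+1}}$ from \cref{lem:standard-flat-interpolation-estimate}. Then use a Poincaré inequality in $L^1$ on $P$: $\|\bar{g}-\bar{g}_P\|_{L^1(P)}\le Ch\|\DM\bar{g}\|_{L^1(P)}$. This inequality needs care, because $P$ is two triangles touching at a single point and is therefore not connected in the usual Lipschitz sense. To handle this, I would take $\bar{g}_P$ to be the mean over the convex hull $\hat P$ of $P$ instead (a parallelogram-like set of diameter $\le Ch$, shape-regular by \cref{ass:admissible-triangulation}). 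Then Poincaré holds on $\hat P$ with constant $Ch$ by scaling from a reference configuration; there are finitely many shapes up to similarity for red refinement, and shape regularity covers the general case. The first term is still fine with this choice, since the cancellation did not depend on the constant's value, and $|\bar{g}_{\hat P}|\,|P|\le C\|\bar{g}\|_{L^1(\hat P)}$. The hulls overlap only boundedly often, since they have diameter $O(h)$ and the mesh is quasi-uniform. The total contribution of the second terms is then $Ch^{\order+2-|\vec{\beta}|}\|\DM\bar{g}\|_{L^1(\M)}\|\bar{f}\|_{C^{\order+1}(\M)}$.

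The main obstacle is the unpaired boundary elements. There the naive bound is $h^{\order+1-|\vec{\beta}|}\|\bar{g}\|_{L^1(B_h)}$, where $B_h$ is the strip of width $h$ from \cref{lem:existence-of-symmetric-triangles}, and the integral of $|\bar{g}|$ over the strip must gain one factor of $h$. For this I would use the trace-type strip estimate for $W^{1,1}$ functions on the fixed macro triangle,
\[
  \|\bar{g}\|_{L^1(\{\operatorname{dist}(\cdot,\partial\M)\le h\})}\le Ch\|\bar{g}\|_{W^{1,1}(\M)} .
\]
It follows from integrating the one-dimensional bound $|\bar{g}(t)|\le|\bar{g}(s)|+\int_t^s|\partial\bar{g}|$ along inward normal segments near each edge, then averaging over $s$ in a fixed-width layer; equivalently, it is the $L^1$ trace inequality applied on the parallel curves $\{\operatorname{dist}=t\}$ for $t\le h$, uniformly in $t$. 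Combined with $\|e\|_{L^\infty}\le Ch^{\order+1-|\vec{\beta}|}\|\bar{f}\|_{C^{\order+1}}$, the boundary contribution is $Ch^{\order+2-|\vec{\beta}|}\|\bar{g}\|_{W^{1,1}(\M)}\|\bar{f}\|_{C^{\order+2}(\M)}$. Adding the three contributions proves the corollary.
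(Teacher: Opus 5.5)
Your proposal is correct and follows the paper's proof essentially step by step. Like the paper, you dispose of the trivial case $|\vec{\beta}|=\order+2$ and split the weight into its mean over the convex hull of each symmetric pair plus a fluctuation. The mean part uses the pairwise cancellation from \cref{thm:flat-interpolation-estimate}, and the fluctuation part uses an $L^1$ Poincar\'e inequality on the hull together with bounded overlap of the hulls. The unpaired elements are handled by the boundary-strip bound $\|\bar g\|_{L^1(\{\operatorname{dist}(\cdot,\partial\M)\le h\})}\le Ch\|\bar g\|_{W^{1,1}(\M)}$, which you sketch a proof of while the paper cites it. Your remark that the Poincar\'e inequality fails on the two-triangle pair itself, which forces the passage to the convex hull, is exactly the point the paper's construction also addresses.
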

\begin{proof}
  If $|\vec{\beta}|=\order+2$, then
  $\DM^{\vec{\beta}}\bar{f}_h=0$ on each element and the asserted estimate is
  the immediate bound
  \[
    \left|\int_\M \bar{g}\,\DM^{\vec{\beta}}\bar{f}\,\dM\right|
    \leq C\|\bar{g}\|_{L^1(\M)}\|\bar{f}\|_{C^{\order+2}(\M)}.
  \]
  We may therefore assume $|\vec{\beta}|\leq\order+1$.
  Let $\symT$ be either the union of a pair of symmetric triangles, $\symT=T_1\cup T_2$, or an unpaired triangle, $\symT=T$, and set
  $Q_{\symT}\colonequals\operatorname{conv}(\symT)$. Since $\M$ is convex,
  $Q_{\symT}\subseteq\M$, and shape regularity gives
  $\diam(Q_{\symT}) \leq C h$. Thus, the Poincar\'e inequality on the convex
  hull implies
	\begin{equation*}
    \|\bar{g}-\langle \bar{g}\rangle_{Q_{\symT}}\|_{L^1(\symT)}\leq C h \|\bar{g}\|_{W^{1,1}(Q_{\symT})},
  \end{equation*}
  where $\langle \bar{g}\rangle_{Q_{\symT}} = |Q_{\symT}|^{-1} \int_{Q_{\symT}} \bar{g}\,\dM$. We decompose
  \begin{equation*}
    \int_{\symT} \bar{g} \cdot (\DM^{\vec{\beta}}\bar{f}_h - \DM^{\vec{\beta}}\bar{f})\,\dM
    = \langle \bar{g}\rangle_{Q_{\symT}} \int_{\symT} (\DM^{\vec{\beta}}\bar{f}_h - \DM^{\vec{\beta}}\bar{f})\,\dM
    + \int_{\symT} (\bar{g}-\langle \bar{g}\rangle_{Q_{\symT}})(\DM^{\vec{\beta}}\bar{f}_h - \DM^{\vec{\beta}}\bar{f})\,\dM.
  \end{equation*}
  If $\symT$ is a paired union, we estimate using the arguments from \cref{thm:flat-interpolation-estimate} together with the Poincar\'e inequality:
  \begin{align*}
		\left|\int_{\symT} \bar{g} \cdot (\DM^{\vec{\beta}}\bar{f}_h - \DM^{\vec{\beta}}\bar{f})\,\dM\right|
		&\leq C h^{\order+2-|\vec{\beta}|}\|\bar{f}\|_{C^{\order+2}(\symT)} \left|\langle \bar{g}\rangle_{Q_{\symT}} \right| |\symT| + C h^{\order+1-|\vec{\beta}|} \|\bar{f}\|_{C^{\order+1}(\symT)} \|\bar{g}-\langle \bar{g}\rangle_{Q_{\symT}}\|_{L^1(\symT)}\\
     &\leq C h^{\order+2-|\vec{\beta}|}\|\bar{f}\|_{C^{\order+2}(\symT)} \|\bar{g}\|_{W^{1,1}(Q_{\symT})}.
  \end{align*}
  If $\symT=T$ is unpaired, we estimate:
  \begin{align*}
		\left|\int_{\symT} \bar{g} \cdot (\DM^{\vec{\beta}}\bar{f}_h - \DM^{\vec{\beta}}\bar{f})\,\dM\right|
		&\leq C h^{\order+1-|\vec{\beta}|}\|\bar{f}\|_{C^{\order+1}(T)} \|\bar{g}\|_{L^{1}(T)}.
  \end{align*}
  As all unpaired triangles are contained in a boundary strip $\omega_h\subset\M$ of width $h$, we estimate
  \begin{align*}
		\sum_{\T\in\TriBar_{h,\omega_h}}\|\bar{g}\|_{L^{1}(\T)}\leq \|\bar{g}\|_{L^{1}(\omega_h)}\leq Ch\|\bar{g}\|_{W^{1,1}(\M)},
  \end{align*}
  with $\TriBar_{h,\omega_h}\subset\TriBar_{h,\M}$ \citep[cf.][Lemma 2]{Mittelmann1977Gravitational}.
  It remains to sum the estimates over all paired unions. The sets
  $Q_{\symT}$ have uniformly bounded overlap. Indeed, each $Q_{\symT}$ is
  contained in a ball of radius $Ch$ around the vertex shared by the symmetric
  pair. By the quasi-uniformity and shape regularity in
  \cref{ass:admissible-triangulation}, only uniformly many such vertices and
  incident pairs can occur in any ball of radius $Ch$. Hence any point of
  $\M$ belongs to at most uniformly many such convex hulls. Therefore
  \begin{align*}
		\sum_{\symT\ \mathrm{paired}} \|\bar{g}\|_{W^{1,1}(Q_{\symT})} \leq C \|\bar{g}\|_{W^{1,1}(\M)}.
  \end{align*}
  Summing over all paired unions and unpaired triangles yields the claimed
  bound.
\end{proof}

\begin{remark}\label{R:weighted-flat-interpolation-estimate-odd}
  Without the parity condition in \cref{cor:weighted-flat-interpolation-estimate},
  or without a symmetric triangulation, the preceding cancellation argument is
  not available. In that case one still has the standard integral estimate
  for every $0\le |\vec{\beta}|\le \order+1$:
  \[
    \left|\sum_{\T\in\TriM_{h,\M}} \int_{\T} \bar{g} \cdot (\DM^{\vec{\beta}}\bar{f}_h - \DM^{\vec{\beta}}\bar{f})\,\dM\right|
    \leq C h^{\order+1-|\vec{\beta}|}\|\bar{g}\|_{L^{1}(\M)}\|\bar{f}\|_{C^{\order+1}(\M)}.
  \]
	In particular, the estimate above is the fallback bound when
  $\order+2-|\vec{\beta}|$ is odd.
\end{remark}

For the special case of \cref{cor:weighted-flat-interpolation-estimate} with $|\vec{\beta}|=1$, we obtain a further improvement if the weight function has higher regularity:

\begin{lemma}\label{lem:improved-first-derivative}
  Let the assumptions of \cref{thm:flat-interpolation-estimate} hold, and in addition assume that the boundary of the macro element has a uniform induced boundary triangulation as in \cref{rem:symmetric-boundary-triangulation}. Let $\bar{g}\in W^{\ell,1}(\M)$, $\ell\in\{1,2\}$. Then, for $|\vec{\beta}|=1$, we have
  \begin{align}
    \left|\int_{\M} \bar{g} \cdot (\DM^{\vec{\beta}}\bar{f}_h - \DM^{\vec{\beta}}\bar{f})\,\dM\right|
		&\leq \begin{cases}
	  	C h^{\order+\ell} \|\bar{g}\|_{W^{\ell,1}(\M)}\|\bar{f}\|_{C^{\order+\ell}(\M)}, & \text{ for }\order\text{ even},\\
	  	C h^{\order+1} \|\bar{g}\|_{W^{1,1}(\M)}\|\bar{f}\|_{C^{\order+1}(\M)}, & \text{ for }\order\text{ odd}.
		\end{cases}\label{eq:improved-first-derivative}
  \end{align}
\end{lemma}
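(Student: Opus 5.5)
The idea is to integrate by parts, moving the derivative $\DM^{\vec{\beta}}$ (with $|\vec{\beta}|=1$, say $\DM^{\vec{\beta}}=\partial_j$) from the interpolation error onto the weight. Since $\bar{f}_h-\bar{f}$ is continuous on $\M$, there are no interior jump terms, and elementwise integration by parts gives
\[
  \int_\M \bar{g}\,\partial_j(\bar{f}_h-\bar{f})\,\dM
  = -\int_\M \partial_j\bar{g}\,(\bar{f}_h-\bar{f})\,\dM
  + \int_{\partial\M} \bar{g}\,(\bar{f}_h-\bar{f})\,\bar{n}_j\,\mathrm{d}s .
\]
For $\order$ odd nothing more is needed: the standard bound of \cref{R:weighted-flat-interpolation-estimate-odd} with $|\vec{\beta}|=1$ gives $h^{\order}$, and the integration by parts above gives $h^{\order+1}$. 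Indeed, the volume term is bounded by $Ch^{\order+1}\|\bar{g}\|_{W^{1,1}}\|\bar{f}\|_{C^{\order+1}}$ using \cref{lem:standard-flat-interpolation-estimate} in $L^\infty$. For the boundary term, $\bar{f}_h-\bar{f}=O(h^{\order+1})$ pointwise, and the trace inequality $\|\bar{g}\|_{L^1(\partial\M)}\le C\|\bar{g}\|_{W^{1,1}(\M)}$ finishes it. The same reasoning handles the case $\order$ even, $\ell=1$, which is the first case of the claim with exponent $\order+1$.

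For $\order$ even and $\ell=2$ the target is $h^{\order+2}$, and the two terms are treated separately.

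\emph{Volume term.} Apply \cref{cor:weighted-flat-interpolation-estimate} with $\vec{\beta}=0$, which is admissible since $\order+2$ is even, and with weight $\partial_j\bar{g}\in W^{1,1}(\M)$. This gives $Ch^{\order+2}\|\bar{g}\|_{W^{2,1}}\|\bar{f}\|_{C^{\order+2}}$.

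\emph{Boundary term.} This is the main obstacle, and it is where the uniform induced boundary triangulation of \cref{rem:symmetric-boundary-triangulation} enters. On each macro edge, $\bar{f}_h|_{\partial\M}$ is the one-dimensional degree-$\order$ Lagrange interpolant of $\bar{f}$ on equal-length segments, because the nodes of a triangle restricted to an edge are the one-dimensional Lagrange nodes. I would repeat the argument of \cref{thm:flat-interpolation-estimate} in one dimension. Pair consecutive segments that share a midpoint node $\xM_0$; they are point reflections of each other about $\xM_0$. Taylor-expand the error so that its leading part is $\sum_i \phiBar_i(\xM)(\xM_i-\xM)^{\order+1}\,\partial_\tau^{\order+1}\bar{f}(\xM_0)/(\order+1)!$. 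The basis functions are even under the reflection and the monomial $(\xM_i-\xM)^{\order+1}$ has odd degree, so the leading part integrates to zero over each pair, leaving $O(h^{\order+2})$ per unit length.

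With the weight, I would copy the Poincaré argument of \cref{cor:weighted-flat-interpolation-estimate} on each pair of boundary segments. This needs the one-dimensional estimate $\|\bar{g}-\langle\bar{g}\rangle\|_{L^1(e)}\le Ch\|\partial_\tau\bar{g}\|_{L^1(e)}$. The paired contributions then total $Ch^{\order+2}\|\bar{g}\|_{W^{1,1}(\partial\M)}\le Ch^{\order+2}\|\bar{g}\|_{W^{2,1}(\M)}$, by the trace theorem $W^{2,1}(\M)\to W^{1,1}(\partial\M)$. There is at most one unpaired segment per macro edge. Its contribution is bounded by $Ch^{\order+1}\|\bar{g}\|_{L^1(e)}\le Ch^{\order+2}\|\bar{g}\|_{L^\infty(\partial\M)}$, and this is controlled because $W^{2,1}$ embeds into $C^0$ in two dimensions. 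Near the macro corners the three edges are treated separately; this is harmless because each edge is a segment with its own pairing.

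The two delicate points to verify are the trace and embedding statements for $\bar{g}\in W^{2,1}$, and the claim that nodes on the edge coincide with the one-dimensional Lagrange nodes, so that the one-dimensional cancellation applies. I expect both to hold for standard equispaced or symmetric nodes.
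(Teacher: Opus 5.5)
Your proposal is correct and follows essentially the same route as the paper. Both integrate by parts; both use the standard volume and edge interpolation bounds when $\order$ is odd or $\ell=1$; and for $\order$ even, $\ell=2$, both combine \cref{cor:weighted-flat-interpolation-estimate} for the volume term with one-dimensional parity cancellation on paired equal-length edge intervals and the trace theorem. Your explicit one-dimensional Poincar\'e argument for the weight and your treatment of the single unpaired interval per edge via $W^{2,1}(\M)\hookrightarrow C^0(\overline{\M})$ fill in details the paper leaves implicit. For the latter, $W^{1,1}(e)\hookrightarrow L^\infty(e)$ on each edge would also suffice.
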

\begin{proof}
  We use integration by parts and write
  \begin{align*}
		\int_{\M} \bar{g} \cdot (\partial_i\bar{f}_h - \partial_i\bar{f})\,\dM &= -\int_{\M} \partial_i \bar{g} \cdot (\bar{f}_h - \bar{f})\,\dM + \int_{\partial\M} \bar{g} \cdot (\bar{f}_h - \bar{f}) \nu^i \,d \bar{s},\quad i=1,2,
  \end{align*}
  where $\nu$ denotes the outer conormal to the macro element $\M$. For $\order$ odd or $\ell=1$, \eqref{eq:improved-first-derivative} follows from the standard estimate in \cref{R:weighted-flat-interpolation-estimate-odd} with $\vec\beta=0$ for the volume term, and from the corresponding one-dimensional standard interpolation estimate on each macro edge for the boundary term. For $\order$ even and $\ell=2$, we use \cref{cor:weighted-flat-interpolation-estimate} to estimate
  \begin{align*}
		\left|\int_{\M} \DM^{\vec\beta} \bar{g} \cdot (\bar{f}_h - \bar{f})\,\dM \right| \leq C h^{\order+2} \|\bar{g}\|_{W^{2,1}(\M)}\|\bar{f}\|_{C^{\order+2}(\M)}.
  \end{align*}
  On each macro edge, $\nu^i$ is constant and the induced boundary subdivision is uniform. Note that the paired equal-length boundary intervals yield the same parity cancellation as in the two-dimensional argument. Thus, we get
  \begin{align*}
		\left|\int_{\partial\M} \bar{g} \cdot (\bar{f}_h - \bar{f}) \nu^i \,d \bar{s}\right| \leq C h^{\order+2} \|\bar{g}\|_{W^{1,1}(\partial \M)}\|\bar{f}\|_{C^{\order+2}(\M)} \leq C h^{\order+2} \|\bar{g}\|_{W^{2,1}(\M)}\|\bar{f}\|_{C^{\order+2}(\M)},
  \end{align*}
  where the last estimate follows from the trace theorem for Sobolev functions.
\end{proof}

\subsection{Interpolation errors over approximated surface patches}

In this section, we assume that we have a surface $\GM$ parametrized over a single macro element, and a discrete approximation  $\GhM$ as described in \cref{sec:surface-parametrization}. We will consistently write $\kg$ for the interpolation order in the geometry, and in some later theorems use $\ku$ for the interpolation order of functions on that discrete surface. The symmetry assumptions below always refer to the flat induced triangulation $\TriBar_{h,\M}$ of the macro element $\M$. No separate symmetry condition is imposed on the curved surface elements.

Throughout the improved estimates of this subsection we assume that
$\M\in\TriBar_{h_0}$, that the induced flat triangulation
$\TriBar_{h,\M}$ is symmetric in the sense of
\cref{def:symmetric-triangulation}, that
$\PhiM\in C^{\kg+2}(\M;\R^3)$, that $\PhihM$ is the elementwise
degree-$\kg$ Lagrange interpolation of $\PhiM$, and that $h$ is
sufficiently small. In estimates that use
\cref{lem:improved-first-derivative}, we further assume that the boundary of
$\M$ has a uniform induced boundary triangulation as in
\cref{rem:symmetric-boundary-triangulation}. Without these symmetry
hypotheses, the corresponding arguments give the standard estimates from
\cref{lem:standard-flat-interpolation-estimate,R:weighted-flat-interpolation-estimate-odd}.

In the following estimates, the constant $C$ might depend on up to $\kg+2$ derivatives of the surface parametrization $\Phi$ and on the coarse grid size $h_0$, but not on the refined grid size $h$.

In order to compare $\GM$ and $\GhM$, we first estimate the integral over the signed-distance function.

\begin{lemma}\label{lem:dist_estimate}
  Under the hypotheses of this subsection, let $\kg$ be even. If
  $f_h\in W^{1,1}(\GhM)$, then
  \begin{align}
    \left|\int_{\GhM} \dist f_h\,\dGh \right| &\leq C h^{\kg+2}\Norm{f_h}{W^{1,1}(\GhM)},
  \end{align}
  with $\dist\colon U_{\delta}(\G)\to\R$ the surface signed-distance function.
\end{lemma}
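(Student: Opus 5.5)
Pull the integral back to the flat macro element: with $\bar f\colonequals f_h\circ\Phih$,
\[
  \int_{\GhM}\dist f_h\,\dGh=\int_\M (\dist\circ\Phih)\,\bar f\,\mGh\,\dM .
\]
The goal is to write $\dist\circ\Phih$ as a linear functional of the interpolation error $\Phih-\Phi$, up to a quadratic remainder. Then \cref{cor:weighted-flat-interpolation-estimate} applies with $\vec\beta=0$ and $\order=\kg$. The parity condition $\kg+2$ even holds because $\kg$ is even.

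\textbf{Step 1 (linearization).} Since $\dist(\Phi(\xM))=0$ and $\dist\in C^{\kg+2}$ on $U_\delta$ with $\D\dist=\nn$, Taylor expansion along the segment from $\Phi(\xM)$ to $\Phih(\xM)$ gives
\[
  \dist\circ\Phih=\nn(\Phi)\cdot(\Phih-\Phi)+\mathcal R ,
  \qquad |\mathcal R|\le C|\Phih-\Phi|^2\le Ch^{2\kg+2}
\]
by \eqref{eq:lem-surface-approx-estimate1}. For $\kg\ge 1$ we have $2\kg+2\ge\kg+2$, so the remainder contributes at most $Ch^{\kg+2}\|\bar f\mGh\|_{L^1(\M)}\le Ch^{\kg+2}\|f_h\|_{L^1(\GhM)}$.

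\textbf{Step 2 (weighted interpolation error).} The leading term is
\[
  \sum_{j=1}^3\int_\M \bar g_j\,\big(\IBar_h^\kg(\Phi_j)-\Phi_j\big)\,\dM ,
  \qquad \bar g_j\colonequals \nn_j(\Phi)\,\bar f\,\mGh .
\]
Each component $\Phi_j$ lies in $C^{\kg+2}(\M)$, and the flat triangulation is symmetric by hypothesis. Applying \cref{cor:weighted-flat-interpolation-estimate} componentwise gives a bound
\[
  Ch^{\kg+2}\,\|\Phi\|_{C^{\kg+2}(\M)}\,\|\bar g_j\|_{W^{1,1}(\M)} .
\]

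\textbf{Step 3 (the weight, main obstacle).} It remains to show $\|\bar g_j\|_{W^{1,1}(\M)}\le C\|f_h\|_{W^{1,1}(\GhM)}$ with $C$ independent of $h$. The weight $\bar g_j$ is only piecewise smooth: $\mGh$ is a piecewise polynomial that jumps across element edges, so $\bar g_j$ is in general not in $W^{1,1}(\M)$ globally, while the corollary as stated requires $W^{1,1}(\M)$.

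I would handle this in one of two ways.

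\emph{(a) Elementwise version of the corollary.} Its proof only uses the Poincar\'e inequality on each patch $\symT$ (a symmetric pair or a single triangle). For a pair $T_1\cup T_2$ meeting in one point, replace the mean over the convex hull by the mean over $\symT$. A broken Poincar\'e inequality on $\symT$ still holds. This is because $\bar f$ itself is $W^{1,1}$ across the shared vertex and hull, being the pullback of a $W^{1,1}(\GhM)$ function through the homeomorphism of \cref{lem:regularity-interpolated-geometry}. The only broken factor is $\mGh$, whose elementwise gradient is bounded uniformly by \cref{lem:regularity-interpolated-geometry} and \eqref{eq:lem-surface-approx-estimate2}.

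\emph{(b) Smooth replacement.} Replace $\mGh$ by the smooth $\mG$. By \eqref{eq:lem-surface-elements1} this costs
\[
  Ch^{\kg}\,\|\Phih-\Phi\|_{L^\infty}\,\|\bar f\|_{L^1}\le Ch^{2\kg+1}\|\bar f\|_{L^1},
\]
which is at most $Ch^{\kg+2}$ since $\kg\ge 2$ (as $\kg$ is even). The new weight $\nn(\Phi)\,\bar f\,\mG$ lies in $W^{1,1}(\M)$ with norm bounded by $C\|\bar f\|_{W^{1,1}(\M)}$.

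I prefer (b), since it needs no modification of the corollary. Finally, $\|\bar f\|_{W^{1,1}(\M)}\le C\|f_h\|_{W^{1,1}(\GhM)}$ by the chain rule and the uniform bounds on $\DM\Phih$, $(\DM\Phih)^+$ and $\mGh$.
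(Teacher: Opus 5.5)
Your proposal is correct and follows the paper's proof. You pull back to $\M$, replace $\mGh$ by $\mG$ at cost $Ch^{2\kg+1}$ (your option (b) is exactly what the paper does, only before linearizing), write $\dist\circ\Phih=\nn(\Phi)\cdot(\Phih-\Phi)+\OO(|\Phih-\Phi|^2)$, and apply \cref{cor:weighted-flat-interpolation-estimate} with $\vec{\beta}=0$, $\order=\kg$ and weights $\nn_j(\Phi)\,(f_h\circ\Phih)\,\mG$. The one difference is that you get the linearization directly from $\dist\circ\Phi=0$ and $\D\dist=\nn$, which is slightly more direct than the paper's route via the Taylor expansion of $\pi$ and the identity $\nhM\cdot\nM=1-\tfrac12|\nhM-\nM|^2$.
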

\begin{proof}
	We lift the integral over $\GhM$ to the macro element $\M$ using the abbreviation $\Phi_h\colonequals\PhihM$, apply \cref{lem:surface-approximation-estimates,lem:surface-elements}, and estimate
	\begin{align*}
		\left|\int_{\GhM} \dist f_h\,\dGh\right| & =  \left|\int_{\M} \dist(\Phi_h(\xM)) f_h(\Phi_h(\xM))\,\mGh(\xM)\dM\right|\\
		& \leq \left|\int_{\M} \dist(\Phi_h(\xM)) f_h(\Phi_h(\xM))\,\mG(\xM)\dM\right|
		+ \underbrace{\|\dist\|_{L^{\infty}(\GhM)}}_{\leq C h^{\kg+1}} \|f_h\circ \Phi_h\|_{{L^1}(\M)}  \underbrace{\|\mG - \mGh\|_{L^{\infty}(\M)}}_{\leq Ch^{\kg}}.
	\end{align*}
	We now set $\bar{f} \colonequals (f_h\circ \Phi_h)\cdot\mG \in W^{1,1}(\M)$,
	and consider the integral $\int_{\M} \dist(\Phi_h(\xM)) \bar{f}(\xM)\,\dM$.

	Let $\xM\in \M$ be a point in the macro element. In order to rewrite $\dist(\Phi_h(\xM))$ in terms of an interpolation error, we introduce symbols for the difference in the discrete and smooth parametrization and lifted normal vectors,
	\begin{align*}
		\bm{e}(\xM)\colonequals \Phi_h(\xM)-\Phi(\xM), \qquad
		\nM(\xM)   \colonequals \bm{n}(\Phi(\xM)), \qquad
		\nhM(\xM)  \colonequals \bm{n}(\Phi_h(\xM)).
	\end{align*}
  From the closest-point projection formula \eqref{eq:closest-point-projection}, we have
  \begin{align*}
    \dist(\Phi_h(\xM))\nhM(\xM)
    &= \Phi_h(\xM) - \pi(\Phi_h(\xM))
     = \bm{e}(\xM)+ \Phi(\xM) - \pi(\Phi_h(\xM)).
  \end{align*}
	A Taylor expansion of $\pi$ at $\Phi(\xM)\in \G$ gives
  \begin{align*}
    \pi(\Phi_h(\xM))
    &= \pi(\Phi(\xM))+D\pi(\Phi(\xM))\cdot\bm{e}(\xM)+\OO(\|\bm{e}(\xM)\|^2)\\
    &= \Phi(\xM)+(\Id - \nM(\xM)\otimes \nM(\xM))\bm{e}(\xM)+\OO(\|\bm{e}(\xM)\|^2).
  \end{align*}
	Thus we obtain
  \begin{align*}
		\dist(\Phi_h(\xM)) & = \dist(\Phi_h(\xM))(\nhM(\xM)\cdot \nM(\xM) + \frac{1}{2}\|\nhM(\xM)-\nM(\xM)\|^2)\\
		&= \nM(\xM)\cdot\bm{e}(\xM)  + \OO(\|\bm{e}(\xM)\|^2).
  \end{align*}

	For the function $\bar{f}\in W^{1,1}(\M)$, with
  $\|\bar f\|_{W^{1,1}(\M)}\le C\|f_h\|_{W^{1,1}(\GhM)}$ by the uniform
  pullback norm equivalences, we rewrite the integral as
  \begin{align*}
    \int_\M \dist(\Phi_h(\xM))\bar{f}(\xM)\,\dM
    = \int_\M (\nM\cdot\bm{e}) \bar{f}\,\dM
      + \int_\M \OO(\|\bm{e}\|^2)\bar{f}\,\dM.
  \end{align*}
	For the first term, we apply \cref{cor:weighted-flat-interpolation-estimate} (with $\vec{\beta}=0$ and $\order=\kg$) componentwise to the geometry interpolation error, using weights $\bar{g}_j\colonequals \bar{f}\,\nM_j\in W^{1,1}(\M)$.
	For the second term, we use the standard interpolation estimate from \cref{lem:standard-flat-interpolation-estimate}, namely $\|\bm{e}\|_{L^\infty(\M)}\le Ch^{\kg+1}$, to get
  \begin{align*}
    \left|\int_\M \OO(\|\bm{e}\|^2)\bar{f}\,\dM\right|
    \leq C h^{2\kg+2}\|\bar{f}\|_{L^1(\M)}
    \leq C h^{\kg+2}\|\bar{f}\|_{W^{1,1}(\M)}.
  \end{align*}
  Hence,
  \begin{align*}
    \left|\int_\M \dist(\Phi_h(\xM))\bar{f}(\xM)\,\dM\right|
    \leq C h^{\kg+2}\|\bar{f}\|_{W^{1,1}(\M)},
  \end{align*}
  from which the assertion follows.
\end{proof}

In the following assertions we consider functions on $\G$ lifted to the discrete surface by either using the closest-point projection $\pi$ or the lifting $L$. By \cref{lem:regularity-interpolated-geometry}, both maps are well defined on $\Gh$ for $h$ sufficiently small. On a single macro patch, however, they need not have the same image: $L(\GhM)=\GM$, while $\pi(\GhM)$ is a possibly different subsurface of $\G$. Thus, we consider functions on $\GM$ in the case of the lifting $L$, and functions on $\pi(\GhM)$ in the case of the closest-point projection.

\begin{corollary}\label{cor:weighted-diff-of-area}
	Under the hypotheses of this subsection, let $\kg$ be even. For every
	$g_1\in W^{1,1}(\pi(\GhM))$,
	\begin{align}\label{eq:weighted-diff-of-area-g1}
		\left|\int_{\pi(\GhM)}g_1\,\dG - \int_{\GhM} g_1\circ \pi\,\dGh\right| &\leq C h^{\kg+2}\|g_1\|_{W^{1,1}(\pi(\GhM))}.
	\end{align}
	If, in addition, the boundary hypothesis of this subsection holds, then for
	$g_2\in W^{\ell,1}(\GM)$, $\ell\in\{1,2\}$,
	\begin{align}\label{eq:weighted-diff-of-area-g2}
		\left|\int_{\GM}g_2\,\dG - \int_{\GhM} g_2\circ  L \,\dGh\right| &\leq C h^{\kg+\ell}\|g_2\|_{W^{\ell,1}(\GM)}.
	\end{align}
\end{corollary}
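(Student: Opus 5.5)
We prove the two estimates separately, pulling both integrals back to the flat macro element $\M$ so that the difference reduces to an integral of an area-element difference against a smooth weight.

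\textbf{First estimate.} The map $\pi|_{\GhM}$ is a homeomorphism onto $\pi(\GhM)$ by \cref{lem:regularity-interpolated-geometry}, and $\pi\circ\Phih$ parametrizes $\pi(\GhM)$ over $\M$. Hence
\[
  \int_{\pi(\GhM)} g_1\,\dG-\int_{\GhM} g_1\circ\pi\,\dGh
  =\int_\M (g_1\circ\pi\circ\Phih)\,\big(\mu[\pi\circ\Phih]-\mGh\big)\,\dM .
\]
The standard bound \eqref{eq:lem-surface-elements2} only gives $h^{\kg+1}$, so we need the leading part of the area ratio. Following \citet[Proposition 2.3]{Demlow2009Higher}, on each element
\[
  \mu[\pi\circ\Phih]/\mGh=(\nn\cdot\nnh)\prod_{j=1}^2(1-\dist\kappa_j).
\]
Here $\kappa_j$ are the principal curvatures evaluated at $\pi$, and $1-\nn\cdot\nnh=\tfrac12|\nn-\nnh|^2=\OO(h^{2\kg})$ by \eqref{eq:lem-surface-approx-estimate3}. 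Expanding the product gives
\[
  \mu[\pi\circ\Phih]-\mGh=-\dist\,\MeanCurvature\circ\pi\;\mGh+\OO(h^{2\kg}).
\]
The $\OO(h^{2\kg})$ term contributes at most $Ch^{2\kg}\|g_1\|_{L^1}\le Ch^{\kg+2}\|g_1\|_{L^1}$, since $\kg\ge2$ when $\kg$ is even. Transforming the remaining term back to $\GhM$ gives $-\int_{\GhM}\dist\,(\MeanCurvature\, g_1)\circ\pi\,\dGh$. We bound this by \cref{lem:dist_estimate} with $f_h=(\MeanCurvature g_1)\circ\pi$. It remains to check that
\[
  \|(\MeanCurvature g_1)\circ\pi\|_{W^{1,1}(\GhM)}\le C\|g_1\|_{W^{1,1}(\pi(\GhM))}.
\]
This follows from the uniform $W^{1,\infty}$ regularity of $\pi|_{\GhM}$ and its inverse, together with the smoothness of $\MeanCurvature$ ($\G\in C^{\kg+2}$).

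\textbf{Second estimate.} Both integrals are parametrized over $\M$ through $\Phi$ and $\Phih$, with the same integrand $\bar g\colonequals g_2\circ\Phi$. Thus the difference equals $\int_\M\bar g\,(\mG-\mGh)\,\dM$. Write $A=\DM\Phi$ and $\bm{E}=\DM(\Phih-\Phi)$, which is $\OO(h^\kg)$ by \eqref{eq:lem-surface-approx-estimate2}. Linearising $\mu$ in the Jacobian gives
\[
  \mGh-\mG=\mG\,\tr\big(G_A^{-1}A^\top\bm{E}\big)+\OO(|\bm{E}|^2)=\sum_{i,j} c_{ij}\,\partial_j(\Phih-\Phi)_i+\OO(h^{2\kg}),
\]
with smooth coefficients $c_{ij}=\mG\,(A G_A^{-1})_{ij}\in C^{\kg+1}(\M)$. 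The quadratic remainder contributes $Ch^{2\kg}\|g_2\|_{L^1}$, which is at most $Ch^{\kg+\ell}\|g_2\|_{L^1}$ for $\kg\ge2$. The linear part is a sum of weighted first-derivative interpolation errors of the components of $\Phi$ with weights $\bar g\,c_{ij}$. We apply \cref{lem:improved-first-derivative} with $\order=\kg$ even, $\ell\in\{1,2\}$, and $\bar f=\Phi_i\in C^{\kg+2}$. The pullback norm equivalence gives $\|\bar g c_{ij}\|_{W^{\ell,1}(\M)}\le C\|g_2\|_{W^{\ell,1}(\GM)}$, which yields the bound $h^{\kg+\ell}$.

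\textbf{Main obstacle.} The delicate step is the first estimate. One must make the expansion of the projected area ratio precise, so that the only term of order $h^{\kg+1}$ is proportional to $\dist$ with a smooth weight. One must also verify that \cref{lem:dist_estimate} applies to the composed weight $(\MeanCurvature g_1)\circ\pi$ with uniform $W^{1,1}$ control. The step $h^{2\kg}\le h^{\kg+2}$ silently requires $\kg\ge2$, which evenness guarantees.
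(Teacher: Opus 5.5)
Your proof is correct. For the second estimate it coincides with the paper's: the paper expands $\mu[\Phi]^2-(\mGh)^2$ with N\'ed\'elec's cross-product identity rather than linearising $\mu$ via $\tr(G_A^{-1}A^\top\bm{E})$, but both reduce to first-derivative interpolation errors of $\Phi$ against smooth weights and then invoke \cref{lem:improved-first-derivative}. For the first estimate you isolate the factor $\dist$ differently. The paper applies the same cross-product expansion to $\mu[\pi\circ\Phih]^2-(\mGh)^2$ and writes $\pi\circ\Phih-\Phih=-\dist\,\nn\circ\Phih$. It then uses that $w_k\colonequals\partial_k(\pi\circ\Phih)\times(\partial_1(\pi\circ\Phih)\times\partial_2(\pi\circ\Phih))$ is tangent to $\G$, so differentiating $(\nn\circ\Phih)\cdot w_k=0$ turns each linear term into $\dist\,(\nn\circ\Phih)\cdot\partial_j w_k$. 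You instead use the exact Jacobian of $\pi|_{\Gh}$ and read off the leading term $\dist\,(\MeanCurvature\circ\pi)\,\mGh$ directly. Your route produces a geometrically transparent weight $(\MeanCurvature g_1)\circ\pi$. It lies in $W^{1,1}(\GhM)$ globally, because $\pi|_{\GhM}$ is bi-Lipschitz, so \cref{lem:dist_estimate} applies exactly as stated. The paper's weight contains second derivatives of the elementwise map $\pi\circ\Phih$, which jump across element edges. Strictly speaking it must first be replaced by a smooth counterpart, at a cost of $\OO(h^{2\kg})$, before \cref{lem:dist_estimate} can be used. In exchange, the paper's argument needs no external geometric identity and treats both liftings with one algebraic formula. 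There are two harmless slips. First, the exact identity is $(\nn\cdot\nnh)\prod_j(1+\dist\,\kappa_j\circ\pi)^{-1}$, with $\kappa_j$ the eigenvalues of $\gradG\nn$ on the tangent plane; your product $\prod_j(1-\dist\,\kappa_j\circ\pi)$ matches it only up to $\OO(\dist^2)=\OO(h^{2\kg+2})$. Second, with the paper's convention $\MeanCurvature=-\gradG\cdot\nn$, the leading term is $+\dist\,(\MeanCurvature\circ\pi)\,\mGh$. Neither affects the bound.
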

\begin{proof}
	Set $\mathfrak{L}_1 = \pi$ and $\mathfrak{L}_2 = L$. Then $\GM = \mathfrak{L}_2(\GhM)$, and we write for $i=1,2$
	\begin{align*}
		\left|\int_{\mathfrak{L}_i(\GhM)}g_i\,\dG - \int_{\GhM} g_i\circ  \mathfrak{L}_i \,\dGh\right|
		&= \left|\int_{\M}g_i\circ\mathfrak{L}_i\circ \Phih \,(\mu[\mathfrak{L}_i\circ \Phih]-\mGh)\dM\right|.
	\end{align*}
	As in the proof of \citet[Lemma 3]{Nedelec1976Curved}, we write pointwise
	\begin{align*}
	\mu[\mathfrak{L}_i\circ \Phih]^2 - (\mGh)^2
		&= \|\partial_1(\mathfrak{L}_i\circ \Phih)\times\partial_2(\mathfrak{L}_i\circ \Phih)\|^2 - \|\partial_1\Phih\times\partial_2\Phih\|^2 \\
	&= 2\langle\partial_1(\mathfrak{L}_i\circ \Phih-\Phih),\,\partial_2(\mathfrak{L}_i\circ \Phih)\times(\partial_1(\mathfrak{L}_i\circ \Phih)\times\partial_2(\mathfrak{L}_i\circ \Phih))\rangle\\
			&\phantom{=}- 2\langle\partial_2((\mathfrak{L}_i\circ \Phih)-\Phih),\,\partial_1(\mathfrak{L}_i\circ \Phih)\times(\partial_1(\mathfrak{L}_i\circ \Phih)\times\partial_2(\mathfrak{L}_i\circ \Phih))\rangle + \mathcal{O}(h^{2\kg}).
	\end{align*}

	For $i=1$, we have $\mathfrak{L}_1\circ \Phih = \pi\circ \Phih$. By definition of $\pi$, we have pointwise
	\[
			\pi(\Phih(\xM))-\Phih(\xM) = - \dist(\Phih(\xM))\nn(\Phih(\xM)),
		\]
	and the vectors $\partial_k(\pi\circ \Phih)\times(\partial_1(\pi\circ \Phih)\times\partial_2(\pi\circ \Phih))$, $k=1,2$, are tangent to $\G$ at $\pi(\Phih(\xM))$ and thus orthogonal to $\nn(\Phih(\xM))$. Hence, we can write pointwise for $j,k=1,2$
	\begin{multline*}
		\langle\partial_j(\pi\circ \Phih-\Phih),\,\partial_k(\pi\circ \Phih)\times(\partial_1(\pi\circ \Phih)\times\partial_2(\pi\circ \Phih))\rangle\\
		=  \dist(\Phih(\xM)) \langle \nn \circ\Phih ,\,\partial_j(\partial_k(\pi\circ \Phih)\times(\partial_1(\pi\circ \Phih)\times\partial_2(\pi\circ \Phih)))\rangle.
	\end{multline*}
	Inserting this into the integral, and applying \cref{lem:dist_estimate} yields the assertion \eqref{eq:weighted-diff-of-area-g1}.

	For $i=2$, we have $\mathfrak{L}_2\circ \Phih = \Phi$.
	Inserting this into the integral,
	\begin{align*}
		\left|\int_{\M}g_2\circ\Phi\,(\mu[\Phi]-\mGh)\dM\right|
			&= \left|\int_{\M}\frac{g_2\circ\Phi}{\mu[\Phi] + \mGh}\,(\mu[\Phi]^2 - (\mGh)^2)\dM\right| \\
			&= \left|\int_{\M}\frac{g_2\circ\Phi}{2\mu[\Phi]}\,(\mu[\Phi]^2 - (\mGh)^2)\dM\right| + \mathcal{O}(h^{2\kg}),
	\end{align*}
  	and applying \cref{lem:improved-first-derivative} for either $\ell=1$ or $\ell=2$, leads to the assertion \eqref{eq:weighted-diff-of-area-g2}.

\end{proof}

The following estimate improves the order in the difference between the two
liftings $L$ and $\pi$ from \cref{lem:interpolation-error-estimates}. For $\ell=2$, the improvement again
comes from the symmetric triangulation through
\cref{cor:weighted-flat-interpolation-estimate}.

\begin{lemma}\label{lem:F-L-F-pi-estimate}
	Under the hypotheses of this subsection, let $\kg$ be even. Let $p\in[1,\infty]$ and $f\in W^{\ell,p}(\G)$, $\ell\in\{1,2\}$, $\bar{g}\in W^{\ell-1,\bar{p}}(\M)$, with $\frac{1}{p}+\frac{1}{\bar{p}}=1$. Then we have the estimate
	\begin{align}
		\left|\int_{\M} (f\circ \pi\circ \Phih - f\circ \Phi)\cdot \bar{g}\, \dM\right|
		&\leq C h^{\kg+\ell} \|f\|_{W^{\ell,p}(\G)}\|\bar{g}\|_{W^{\ell-1,\bar{p}}(\M)}.
	\end{align}
\end{lemma}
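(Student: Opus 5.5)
The plan is to write the difference $f\circ\pi\circ\Phih - f\circ\Phi$ via the fundamental theorem of calculus along the segment between the two surface points $\Phi(\xM)$ and $\pi(\Phih(\xM))$. Both lie on $\G$, so I will use a smooth extension of $f$ to $U_\delta$ that is constant in normal direction, $f^e\colonequals f\circ\pi$. Set $\bm{d}(\xM)\colonequals \pi(\Phih(\xM))-\Phi(\xM)$. Then
\[
  f\circ\pi\circ\Phih - f\circ\Phi = \int_0^1 \D f^e\big(\Phi+t\bm{d}\big)\,\dt\cdot \bm{d}.
\]
By \cref{lem:interpolation-error-estimates} (with $l=0$), $\|\bm{d}\|_{L^\infty(\M)}\le Ch^{\kg+1}$.

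For $\ell=1$ we simply bound $|\bm d|$ pointwise. After changing variables to $\G$ with uniformly bounded Jacobians and accounting for the $\OO(h^{\kg+1})$ shift of the argument, this gives $\|\D f^e(\Phi+t\bm d)\|_{L^p(\M)}\le C\|f\|_{W^{1,p}(\G)}$. Hölder's inequality with $\bar g\in L^{\bar p}$ then yields $Ch^{\kg+1}$. Smooth functions are dense, so it suffices to argue for smooth $f$ and pass to the limit.

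For $\ell=2$ we need one extra order, which should come from cancellation in the leading term of $\bm d$. Write $\bm{e}=\Phih-\Phi$. Expanding $\pi$ at $\Phi(\xM)$ as in the proof of \cref{lem:dist_estimate} gives
\[
  \bm d=\pi(\Phi+\bm e)-\pi(\Phi)=\PP(\Phi)\bm e+\OO(|\bm e|^2),
\]
so the leading term is the tangential part of the geometry interpolation error. Next I replace $\D f^e(\Phi+t\bm d)$ by $\D f^e(\Phi)$. The error is controlled by $|\D^2 f^e|\,|\bm d|$, and $|\bm d|^2$ is $\OO(h^{2\kg+2})$, which is $\le Ch^{\kg+2}$ once integrated against $\bar g$; this uses $f\in W^{2,p}$. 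The main term becomes
\[
  \int_\M \bm e\cdot\big(\PP(\Phi)\,(\gradG f)\circ\Phi\big)\,\bar g\,\dM.
\]
This is exactly a weighted interpolation error of the patch map $\PhiM$ of even order $\kg$, with weight $\bar g_j\colonequals \bar g\,[\PP\gradG f]_j\circ\Phi$. Applying \cref{cor:weighted-flat-interpolation-estimate} with $\vec\beta=0$ and $\order=\kg$ componentwise gives $Ch^{\kg+2}\|\bar g_j\|_{W^{1,1}(\M)}\|\Phi\|_{C^{\kg+2}}$. The product rule and Hölder then give $\|\bar g_j\|_{W^{1,1}}\le C\|f\|_{W^{2,p}(\G)}\|\bar g\|_{W^{1,\bar p}(\M)}$.

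I expect the main obstacle to be the regularity bookkeeping in this last step and in the remainder. The corollary needs a $W^{1,1}$ weight, and differentiating $(\gradG f)\circ\Phi$ uses only $f\in W^{2,p}$ paired with $\bar g\in W^{1,\bar p}$, which matches the hypotheses. Controlling the remainder terms involving $\D^2 f^e$ at shifted points in $L^p$ requires the density argument and uniform changes of variables. For $p=\infty$ or $p=1$ the endpoint cases need separate care, for example by approximating in $W^{2,p}$ and passing to the limit.
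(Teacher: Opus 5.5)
Your proposal is correct and follows essentially the paper's route. For $\ell=1$, your integrand $f^e(\Phi+t\bm{d})=f(\pi(\Phi+t\bm{d}))$ is exactly the paper's projected homotopy $\Gamma_t$; for $\ell=2$, you isolate the same leading term $\int_\M \bm{e}\cdot(\gradG f)\circ\Phi\,\bar g\,\dM$ and apply \cref{cor:weighted-flat-interpolation-estimate} with the same weights, the only cosmetic difference being that the paper Taylor-expands $f\circ\pi$ at $\Phi$ directly with increment $\bm{e}=\Phih-\Phi$, which avoids your separate linearization $\bm{d}=\PP\bm{e}+\OO(|\bm{e}|^2)$ (harmless, since $\D(f\circ\pi)(\Phi)$ is tangential).
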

\begin{proof}
	Note that for $1\leq p<\infty$, by density it is enough to show the estimates for $f\in C^{\infty}(\G)$.
	For $\ell=1$, we use Hölder's inequality to estimate
	\begin{equation*}
		\left|\int_{\M} (f\circ \pi\circ \Phih - f\circ \Phi)\cdot \bar{g}\, \dM\right| \leq \|f\circ \pi\circ \Phih - f\circ \Phi\|_{L^p}\|\bar{g}\|_{L^{\bar{p}}(\M)}.
	\end{equation*}
	For any $\xM\in \Omega$, we define the projected straight-line homotopy
	\[
		\Gamma_t(\xM)\colonequals \pi\big(\Phi(\xM)+t(\pi\circ\Phih(\xM)-\Phi(\xM))\big),
		\qquad 0\le t\le 1 .
	\]
	By \cref{lem:surface-approximation-estimates}, we have $\|\pi\circ \Phih-\Phi\|_{L^\infty(\M)}\le Ch^{\kg+1}$, and thus $\Gamma_t$ is indeed well-defined for $h$ small enough, and
	\begin{equation*}
		|\partial_{t}\Gamma_t(\xM)|\leq C h^{\kg+1}\quad \forall \xM\in \M,\ t\in[0,1].
	\end{equation*}
	Note that for each $t\in [0,1]$ the mapping $\Gamma_t:\M\to\Gamma$ is a
	uniformly regular parametrization of a patch of $\G$. In particular its
	area element $\mu[\Gamma_t]$ is bounded above and below independently of
	$h$ and $t$. Consequently, for every $v\in L^p(\G)$,
	\[
		\|v\circ\Gamma_t\|_{L^p(\M)}\le C\|v\|_{L^p(\G)},\qquad 0\le t\le1,
	\]
	where $v\circ\Gamma_t$ is the usual pullback of $v$ from the surface to the macro element. Below we use this bound with $v=\gradG f$.
	By the fundamental theorem of calculus, we can thus estimate pointwise
	\begin{equation*}
		\left|f(\pi(\Phih(\xM))) - f(\Phi(\xM))\right| = \left|\int_0^1 \frac{d}{dt} f(\Gamma_t(\xM)) \;\dt\right| \leq Ch^{\kg+1}\int_0^1 |\gradG f(\Gamma_t(\xM))|\;\dt.
	\end{equation*}
	Taking the $L^p$-norm, we indeed obtain
	\begin{equation*}
		\|f\circ \pi\circ \Phih - f\circ \Phi\|_{L^p} \leq Ch^{\kg+1}\sup_{t\in [0,1]}\|\gradG f \circ\Gamma_t\|_{L^p(\M)}\leq Ch^{\kg+1}\|\gradG f\|_{L^p(\G)}.
	\end{equation*}
	For $p=\infty$ the same estimate follows from the Lipschitz representative of $f\in W^{1,\infty}(\G)$.

	For $\ell=2$ and $1\leq p < \infty$, we again use a density argument and a Taylor expansion of the smooth function $f\circ \pi$ to write
	\begin{align*}
		f(\pi(\Phih(\xM)))
		& = f(\Phi(\xM)) + \D(f\circ \pi)(\Phi(\xM))\cdot \bm{e}(\xM)+ \frac{1}{2}\bm{e}^{T}(\xM)\D^2(f\circ \pi)(\xi)\bm{e}(\xM),
	\end{align*}
	where $\bm{e}(\bar{x})= \Phih(\xM) -\Phi(\xM)$, and $\xi\in \R^3$ with $|\xi-\Phi(\xM)|\leq Ch^{\kg+1}$.
	Thus, by elementwise integration we obtain
	\begin{multline*}
		\int_{\M} (f\circ \pi\circ \Phih - f\circ \Phi)\cdot g\, \dM
		= \int_{\M} (D(f\circ \pi) \circ\Phi\cdot \bm{e}+\frac{1}{2}\bm{e}^{T} D^2(f\circ \pi)(\xi)\bm{e} )\cdot \bar{g}\, \dM\\
		\leq C h^{\kg+2} \|\Phi\|_{C^{\kg+2}(\M)} \|G\|_{W^{1,1}(\M)} + Ch^{2m+2}\|\Phi\|_{C^{\kg+1}(\M)}^2 \|f\|_{W^{2,p}(\G)}\|\bar{g}\|_{L^{\bar{p}}(\M)},
	\end{multline*}
	where we used \cref{cor:weighted-flat-interpolation-estimate} component-wise with the weight functions $G_j=  \bar{g} D(f\circ \pi)_j \circ\Phi$ for the first term, and standard interpolation error estimates for the second term.
	The assertion then follows from estimating $\|G\|_{W^{1,1}(\M)}$.
 	For $p=\infty$, the same argument is interpreted with a $C^{1,1}$-representative and the one-dimensional Taylor formula along almost every path $t\mapsto \Phi(\xM)+t\bm e(\xM)$.
\end{proof}

The following theorems characterize the interpolation error of discrete functions on the discrete surface and their derivatives. These results extend the classical findings of \citet{Chien1993Piecewise,Chien1995Numerical} for quadrature errors on discrete surfaces and are partially reproduced in \citet{zavalani2024note}. Our proofs build on the estimates on the macro elements and use the parametrization as an additional weight function. The assumptions needed for these applications are the hypotheses of this subsection.

In the assertions of the theorems, we combine even and odd polynomial order statements, by introducing the following notation: for an integer $\order\geq 1$, set
\begin{align}
	\widehat{\order} &\colonequals \begin{cases}
		\order+1 & \text{ if }\order\text{ is odd}, \\
		\order+2 & \text{ if }\order\text{ is even},
	\end{cases} &
	\widehat{\order}^{(\ell)} &\colonequals \begin{cases}
		\order+1 & \text{ if }\order\text{ is odd}, \\
		\order+\ell & \text{ if }\order\text{ is even}.
	\end{cases}
	\label{eq:even-odd-order}
\end{align}

\begin{theorem}\label{thm:interpolation_estimate_-1}
		Assume the hypotheses of this subsection, including the boundary hypothesis. Let $f\in C^{\ku+2}(\GM;\R)$ be a scalar function. Let $f_h \colonequals \II_{h,\kg}^\ku f\in C^0(\GhM;\R)$ be the interpolation of $f$, and $g\in W^{\ell,1}(\GM)$, $\ell\in\{1,2\}$.
		Then
	\begin{align}
		\left| \int_{\GM} f\cdot g \,\dG - \int_{\GhM} f_h\cdot (g\circ L)\,\dGh \right|
			&\leq Ch^{\min\{\widehat{\ku}, \widehat{\kg}^{(\ell)}\}} \|f\|_{C^{\widehat{\ku}}(\GM)} \|g\|_{W^{\ell,1}(\GM)},
	\end{align}
		with $\widehat{\ku}$ and $\widehat{\kg}^{(\ell)}$ as in \eqref{eq:even-odd-order}.
\end{theorem}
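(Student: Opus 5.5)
Pull both integrals back to the flat macro element $\M$ and split the error into a function-interpolation part and a geometric area-element part. By definition of the lifting, $L\circ\Phih=\Phi$, so $(g\circ L)\circ\Phih=g\circ\Phi=:\bar g$. By \cref{def:surface-function-interpolation}, $f_h\circ\Phih=\IBar_h^\ku\bar f$ with $\bar f\colonequals f\circ\Phi$. Hence
\begin{align*}
  \int_{\GM} f g\,\dG - \int_{\GhM} f_h\,(g\circ L)\,\dGh
  &= \int_\M (\bar f-\IBar_h^\ku\bar f)\,\bar g\,\mG\,\dM
   + \int_\M \IBar_h^\ku\bar f\,\bar g\,(\mG-\mGh)\,\dM \\
  &=: E_1 + E_2 .
\end{align*}

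The term $E_1$ is a weighted flat interpolation error with weight $\bar g\,\mG$. Since $\mG\in C^{\kg+1}(\M)$, the pullback norm equivalences give $\|\bar g\,\mG\|_{W^{1,1}(\M)}\le C\|g\|_{W^{1,1}(\GM)}$, and $\|\bar f\|_{C^{j}(\M)}\le C\|f\|_{C^j(\GM)}$ with constants depending on derivatives of $\Phi$ up to order $\ku+2\le\kg+2$. (If $\ku>\kg$ this needs more regularity of $\Phi$; I would note this caveat or assume $\Phi$ smooth enough.) For $\ku$ even, \cref{cor:weighted-flat-interpolation-estimate} with $\vec\beta=0$, $\order=\ku$ gives $|E_1|\le Ch^{\ku+2}\|f\|_{C^{\ku+2}}\|g\|_{W^{1,1}}$. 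For $\ku$ odd, \cref{R:weighted-flat-interpolation-estimate-odd} gives $Ch^{\ku+1}$. In both cases the bound is $Ch^{\widehat\ku}$. Only $\ell\ge1$ regularity of $g$ is needed here.

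For $E_2$, replace $\IBar_h^\ku\bar f$ by $\bar f$ at cost $\|\bar f-\IBar_h^\ku\bar f\|_{L^\infty}\|\bar g\|_{L^1}\|\mG-\mGh\|_{L^\infty}\le Ch^{\ku+1+\kg}$, using \cref{lem:standard-flat-interpolation-estimate,lem:surface-elements}. Since $\kg\ge1$, this is at most $Ch^{\ku+2}\le Ch^{\widehat\ku}$. The remaining term is
\[
  \int_\M \bar f\,\bar g\,(\mG-\mGh)\,\dM
  = \int_{\GM} (fg)\,\dG - \int_{\GhM} (fg)\circ L\,\dGh ,
\]
which is exactly the quantity controlled by \eqref{eq:weighted-diff-of-area-g2} in \cref{cor:weighted-diff-of-area} with $g_2=fg$. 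For $\kg$ even this gives $Ch^{\kg+\ell}\|fg\|_{W^{\ell,1}(\GM)}$. For $\kg$ odd I would rerun that proof's linearization, $\mu[\Phi]^2-(\mGh)^2$ written as first derivatives of $\Phi-\Phih$ paired with smooth vectors plus $\OO(h^{2\kg})$, and apply the odd case of \cref{lem:improved-first-derivative} to get $Ch^{\kg+1}$. Together this is $Ch^{\widehat\kg^{(\ell)}}$. Finally, $\|fg\|_{W^{\ell,1}}\le C\|f\|_{C^{\ell}}\|g\|_{W^{\ell,1}}$ and $\ell\le2\le\widehat\ku$, so the norm of $f$ is absorbed into $\|f\|_{C^{\widehat\ku}(\GM)}$.

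The main obstacle is the $E_2$ step. One has to check carefully that \cref{cor:weighted-diff-of-area} (or its odd analogue) applies with the product weight $fg$. In particular, the weights produced in the linearization, $(fg)\circ\Phi/(2\mu[\Phi])$ times the cross-product vectors built from $\DM\Phi$, must lie in $W^{\ell,1}(\M)$ with norms controlled by $\|g\|_{W^{\ell,1}}$ and finitely many derivatives of $\Phi$ and $f$. The $\OO(h^{2\kg})$ remainder must also not spoil the rate: $2\kg\ge\kg+\ell$ holds for $\kg\ge2$, and for $\kg=1$ (odd) the target is only $h^{2}=h^{2\kg}$. The uniform boundary triangulation hypothesis enters exactly through \cref{lem:improved-first-derivative} in this step.
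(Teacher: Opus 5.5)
Your proposal is correct and follows the paper's proof essentially step for step. It uses the same split into a weighted flat interpolation error with weight $\bar g\,\mG$ (handled by \cref{cor:weighted-flat-interpolation-estimate} for even $\ku$ and \cref{R:weighted-flat-interpolation-estimate-odd} for odd $\ku$) and an area-element term, in which $\IBar_h^{\ku}\bar f$ is replaced by $\bar f$ at cost $h^{\ku+\kg+1}$ and the remainder is identified with \eqref{eq:weighted-diff-of-area-g2} for $g_2=fg$ (for odd $\kg$, rerunning the linearization with \cref{lem:improved-first-derivative}). Your remark that bounding $\|\bar f\|_{C^{\widehat{\ku}}(\M)}$ by $\|f\|_{C^{\widehat{\ku}}(\GM)}$ needs more smoothness of $\Phi$ than $C^{\kg+2}$ when $\ku>\kg$ is a fair caveat, which the paper passes over silently.
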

\begin{proof}
	We set $\bar{g}\colonequals g\circ \Phi$ and $\bar{f}\colonequals f\circ\Phi$. Then $f_h\circ L^{-1} = \bar{f}_h \circ \Phi^{-1}$ with $\bar{f}_h\colonequals\IBar_h^\ku(\bar{f})$.
	We estimate
	\begin{multline*}
		\left| \int_{\GM} f\cdot g \,\dG - \int_{\GhM} f_h\cdot (g\circ L)\,\dGh \right| \\
		\begin{aligned}
		& \leq
		\left| \int_{\GM} (f-f_h\circ L^{-1})\cdot g \,\dG \right| + \left| \int_{\GM} (f_h \circ L^{-1}) \cdot g \,\dG - \int_{\GhM} f_h\cdot (g\circ L)\,\dGh \right|\\
		& =
		\left| \int_{\M}  (f-f_h\circ L^{-1})\circ \Phi\cdot \bar{g}\; \mG \,\dM \right| + \left| \int_{\M}  f_h\circ \PhihM\cdot \bar{g} \; (\mG-\mGh) \dM \right|.
		\end{aligned}
	\end{multline*}
	Note that $f_h \circ L^{-1}\circ \Phi = \bar{f}_h = f_h\circ \PhihM$. As $\bar{g}\mG \in W^{1,1}(\M)$, for even $\ku$ we estimate the first term using \cref{cor:weighted-flat-interpolation-estimate}, while for odd $\ku$ we use the standard weighted estimate from \cref{R:weighted-flat-interpolation-estimate-odd}:
	\begin{align*}
			\left| \int_{\M} (\bar{f} - \bar{f}_h)\cdot \bar{g}\mG \,\dM \right| \leq Ch^{\widehat{\ku}}\|\bar{f}\|_{C^{\widehat{\ku}}(\M)}\|\bar{g}\|_{W^{1,1}(\M)}\leq Ch^{\widehat{\ku}}\|f\|_{C^{\widehat{\ku}}(\GM)}\|{g}\|_{W^{1,1}(\GM)}
	\end{align*}
	with $\widehat{\ku}$ as above.
	We add and subtract a smooth term $f\circ\Phi=\bar{f}$, split up the integral, and use standard interpolation error estimates, \cref{R:weighted-flat-interpolation-estimate-odd},
	\begin{align*}
		\left| \int_{\M} \bar{f}_h \cdot \bar{g}\;(\mG - \mGh) \,\dM\right|
		 \leq \left| \int_{\M} \bar{f} \cdot \bar{g}\;(\mG - \mGh) \,\dM\right|+ \underbrace{\left| \int_{\M} (\bar{f}_h - \bar{f}) \cdot \bar{g}\;(\mG - \mGh) \,\dM\right|}_{\leq Ch^{\ku+\kg+1}\|f\|_{C^{\ku+1}(\GM)}\|g\|_{L^{1}(\GM)}}.
	\end{align*}
	For even $\kg$, the first term is estimated using \cref{cor:weighted-diff-of-area}. For odd $\kg$, we use the same area-element expansion as in the proof of that corollary and apply \cref{lem:improved-first-derivative} with interpolation order $\kg$. Since $f \cdot g \in W^{\ell,1}(\GM)$, both cases give
	\begin{align*}
		\left| \int_{\M} \bar{f} \cdot \bar{g}\;(\mG - \mGh) \,\dM\right|
		&= \left| \int_{\GM} f\cdot g \,\dG - \int_{\GhM} (f\cdot g)\circ L\,\dGh\right|\\
		&\leq C h^{\widehat{\kg}^{(\ell)}}\| f\cdot g\|_{W^{\ell,1}(\GM)}\\
		&\leq C h^{\widehat{\kg}^{(\ell)}}\|f\|_{C^{\ell}(\GM)}\| g\|_{W^{\ell,1}(\GM)},
	\end{align*}
	with $\widehat{\kg}^{(\ell)}$ as above.
	Since $\ku,\kg\geq 1$ we have $\ku+\kg+1\geq \min\{\widehat{\ku},\widehat{\kg}^{(\ell)}\}$ and the assertion follows.
\end{proof}

If we lift with $\pi$ instead of $L$, we get a slightly better estimate, but have to assume that the functions are smooth on a larger subsurface of $\G$ than $\GM$.

\begin{theorem}\label{thm:interpolation_estimate_0}
	Assume the hypotheses of this subsection. Let $f\in C^{\ku+2}(\G;\R)$ be a scalar function. Let $f_h \colonequals \II_{h,\kg}^\ku f\in C^0(\Gh;\R)$ be the interpolation of $f$, and $g\in W^{1,1}(\G)$.
	Then
	\begin{equation}
		\left| \int_{\pi(\GhM)} f\cdot g \,\dG - \int_{\GhM} f_h\cdot (g\circ \pi)\,\dGh \right|
		\leq Ch^{\min\{\widehat{\ku}, \widehat{\kg}\}} \|f\|_{C^{\widehat{\ku}}(\G)} \|g\|_{W^{1,1}(\G)},
	\end{equation}
	with $\widehat{\ku}$ and $\widehat{\kg}$ as in \eqref{eq:even-odd-order}.
\end{theorem}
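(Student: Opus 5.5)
The plan is to mirror the proof of \cref{thm:interpolation_estimate_-1}, but to measure everything with the closest-point lifting $\pi$. The first step pulls the discrete integral back to $\M$. By \cref{def:surface-function-interpolation}, $f_h\circ\Phih=\bar f_h\colonequals\IBar_h^\ku(\bar f)$ with $\bar f\colonequals f\circ\Phi$. The discrete integral then reads $\int_\M \bar f_h\,(g\circ\pi\circ\Phih)\,\mGh\,\dM$. I would split the error into three pieces:
\begin{align*}
  \int_{\pi(\GhM)} f g\,\dG - \int_{\GhM} f_h (g\circ\pi)\,\dGh
  &= \Big(\int_{\pi(\GhM)} f g\,\dG - \int_{\GhM} (fg)\circ\pi\,\dGh\Big)\\
  &\quad+\int_\M \big(f\circ\pi\circ\Phih - f\circ\Phi\big)\,(g\circ\pi\circ\Phih)\,\mGh\,\dM\\
  &\quad+\int_\M (\bar f-\bar f_h)\,(g\circ\pi\circ\Phih)\,\mGh\,\dM .
\end{align*}
This is an exact identity, since $(fg)\circ\pi\circ\Phih$ minus the second and third integrands recombines to $\bar f_h\,g\circ\pi\circ\Phih$.

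\textbf{First term.} For even $\kg$, \cref{cor:weighted-diff-of-area}, \eqref{eq:weighted-diff-of-area-g1}, applied with $g_1=fg\in W^{1,1}(\pi(\GhM))$ gives the bound $Ch^{\kg+2}\|f\|_{C^1}\|g\|_{W^{1,1}}$. This is exactly why the boundary hypothesis is not needed here: the $\pi$-version of the area estimate only uses \cref{lem:dist_estimate}, which has no boundary terms. For odd $\kg$, \cref{lem:surface-elements}, \eqref{eq:lem-surface-elements2}, gives the pointwise bound $h^{\kg+1}$ on $\mu[\pi\circ\Phih]-\mGh$, hence $Ch^{\kg+1}\|fg\|_{L^1}$. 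In both cases the order is $h^{\widehat\kg}$.

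\textbf{Second term.} This is the lifting mismatch. For even $\kg$ I would apply \cref{lem:F-L-F-pi-estimate} with $\ell=2$, $p=\infty$, and the weight $\bar g\colonequals (g\circ\pi\circ\Phih)\,\mGh$. Checking that this weight lies in $W^{1,1}(\M)$ is the main obstacle. $\mGh$ is only piecewise smooth across element edges, so the weight is elementwise $W^{1,1}$ with jumps. To handle this, I would rather inspect the proof of \cref{lem:F-L-F-pi-estimate}: it only needs \cref{cor:weighted-flat-interpolation-estimate} with the weight $G_j=\bar g\,D(f\circ\pi)_j\circ\Phi$. The fix is to replace $\mGh$ by $\mu[\Phi]$ at a cost of $h^{\kg}\cdot h^{\kg+1}\le h^{\kg+2}$ via \eqref{eq:lem-surface-elements1}, and to replace $g\circ\pi\circ\Phih$ by $g\circ\Phi$ at cost $h^{\kg+1}\cdot h^{\kg+1}$, using the $\ell=1$ estimate with the roles of $f,g$ swapped. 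Here $g\in W^{1,1}$ and $f\circ\pi\circ\Phih-f\circ\Phi=\OO(h^{\kg+1})$ pointwise. The remaining weight $(g\circ\Phi)\,\mu[\Phi]$ is genuinely in $W^{1,1}(\M)$, which yields $Ch^{\kg+2}\|f\|_{C^2}\|g\|_{W^{1,1}}$. For odd $\kg$, the $\ell=1$ case, or directly the pointwise bound $|f\circ\pi\circ\Phih-f\circ\Phi|\le Ch^{\kg+1}\|f\|_{C^1}$, gives $h^{\kg+1}$.

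\textbf{Third term.} This is the function interpolation error. I would again replace the weight by the $W^{1,1}(\M)$ function $(g\circ\Phi)\mu[\Phi]$. The errors are $\|\bar f-\bar f_h\|_{L^\infty}\cdot h^{\kg}\|g\|_{L^1}\le Ch^{\ku+\kg+1}$ and a term of order $h^{\ku+1}h^{\kg+1}$. Then \cref{cor:weighted-flat-interpolation-estimate} with $\vec\beta=0$ gives $h^{\ku+2}$ for even $\ku$, and \cref{R:weighted-flat-interpolation-estimate-odd} gives $h^{\ku+1}$ for odd $\ku$. The pullback norm equivalences convert the norms on $\M$ to norms on $\G$.

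Finally, since $\ku+\kg+1\ge\min\{\widehat\ku,\widehat\kg\}$, collecting the three terms gives the stated bound $Ch^{\min\{\widehat\ku,\widehat\kg\}}\|f\|_{C^{\widehat\ku}(\G)}\|g\|_{W^{1,1}(\G)}$. The assumption that $f$ and $g$ are defined on all of $\G$ is what makes $\pi(\GhM)$ admissible as an integration domain.
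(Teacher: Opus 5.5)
Your proposal is correct and takes essentially the paper's route: the same three-way split into an area-element term (\cref{cor:weighted-diff-of-area} for even $\kg$, \eqref{eq:lem-surface-elements2} for odd $\kg$), a lifting-mismatch term (\cref{lem:F-L-F-pi-estimate}), and a flat interpolation term (\cref{cor:weighted-flat-interpolation-estimate} or \cref{R:weighted-flat-interpolation-estimate-odd}), with $\mGh$ replaced by the smooth $\mG$ at cost $h^{2\kg+1}$ or $h^{\ku+\kg+1}$. The differences are cosmetic: you pair the area term with $fg$ and the lifting term with $\mGh$, where the paper uses $(\bar f\circ(\pi\circ\Phih)^{-1})\,g$ and $\mu[\pi\circ\Phih]$; and your extra replacement of $g\circ\pi\circ\Phih$ by $g\circ\Phi$ is unnecessary but harmless, since $\pi\circ\Phih$ is bi-Lipschitz and that weight already lies in $W^{1,1}(\M)$.
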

\begin{proof}
	We set $\bar{g}\colonequals g\circ \pi\circ \Phih$ and $\bar{f}\colonequals f\circ\Phi$. Then $f_h\circ \Phih = \bar{f}_h$ with $\bar{f}_h\colonequals\IBar_h^\ku(\bar{f})$. By \cref{lem:regularity-interpolated-geometry}, the map $\pi\circ\Phih\colon\M\to\pi(\GhM)$ is uniformly regular for $h$ sufficiently small.
	We estimate
	\begin{align*}
		\Big| \int_{\pi(\GhM)} f\cdot g \,\dG &- \int_{\GhM} f_h\cdot (g\circ \pi)\,\dGh \Big|
		= \left| \int_{\M} f\circ \pi\circ \Phih \cdot \bar{g} \;\mu[\pi\circ \Phih]\,\dM - \int_{\M} \bar{f}_h\cdot \bar{g}\;\mGh\,\dM \right| \notag\\
		& \leq \underbrace{\left| \int_{\M} (f\circ \pi\circ \Phih -\bar{f}) \cdot \bar{g} \;\mu[\pi\circ \Phih]\,\dM\right|}_{(i)}
		+ \underbrace{\left|\int_{\M} \bar{f}\cdot \bar{g}\;(\mu[\pi\circ \Phih]-\mGh)\,\dM\right|}_{(ii)} \\
		&\phantom{\leq} + \underbrace{\left| \int_{\M} (\bar{f}- \bar{f}_h)\cdot \bar{g}\;\mGh\,\dM \right|}_{(iii)}.
	\end{align*}
	The third term $(iii)$ can be further estimated as
	\begin{align*}
		\left| \int_{\M} (\bar{f}- \bar{f}_h)\cdot \bar{g}\;\mGh\,\dM \right|
		&\leq \left| \int_{\M} (\bar{f}- \bar{f}_h)\cdot \bar{g}\;\mG\,\dM \right| + \left| \int_{\M} (\bar{f}- \bar{f}_h)\cdot \bar{g}\;(\mG-\mGh)\,\dM \right|.
	\end{align*}
		As $\bar{g}\mG \in W^{1,1}(\M)$, for even $\ku$ we use \cref{cor:weighted-flat-interpolation-estimate}, while for odd $\ku$ we use the standard weighted estimate from \cref{R:weighted-flat-interpolation-estimate-odd}:
	\begin{align*}
		\left| \int_{\M} (\bar{f} - \bar{f}_h)\cdot \bar{g}\mGh \,\dM \right| &\leq Ch^{\widehat{\ku}}\|\bar{f}\|_{C^{\widehat{\ku}}(\M)}\|\bar{g}\|_{W^{1,1}(\M)} + Ch^{\ku+\kg+1}\|\bar{f}\|_{C^{\ku+1}(\M)}\|\bar{g}\|_{L^{1}(\M)}\\
		&\leq Ch^{\widehat{\ku}}\|f\|_{C^{\widehat{\ku}}(\G)}\|{g}\|_{W^{1,1}(\G)}.
	\end{align*}

	For the second term $(ii)$, define $g_3\colonequals \bar{f}\circ(\pi\circ\Phih)^{-1}\cdot g$ on $\pi(\GhM)$.
	Then $g_3\circ\pi\circ\Phih=\bar f\,\bar g$, and the uniform regularity of $\pi\circ\Phih$ gives
	\[
		\|g_3\|_{W^{1,1}(\pi(\GhM))}
		\leq C\|f\|_{C^1(\G)}\|g\|_{W^{1,1}(\G)}.
	\]
	We apply \cref{cor:weighted-diff-of-area} for even $\kg$ and the standard estimate \eqref{eq:lem-surface-elements2} for odd $\kg$, to estimate
	\begin{align*}
		\left|\int_{\M} \bar{f}\cdot \bar{g}\;(\mu[\pi\circ \Phih]-\mGh)\,\dM\right|
		&\leq C h^{\widehat{\kg}}\|g_3\|_{W^{1,1}(\pi(\GhM))}
		\leq Ch^{\widehat{\kg}} \|f\|_{C^{1}(\G)}\|g\|_{W^{1,1}(\G)}.
	\end{align*}
	For the first term $(i)$ we estimate
	\begin{multline*}
		\left| \int_{\M} (f\circ \pi\circ \Phih -\bar{f}) \cdot \bar{g} \;\mu[\pi\circ \Phih]\,\dM\right|\\
		\leq \left| \int_{\M} (f\circ \pi\circ \Phih -\bar{f}) \cdot \bar{g} \;\mG\,\dM\right| + \left| \int_{\M} (f\circ \pi\circ \Phih -\bar{f}) \cdot \bar{g} \;(\mu[\pi\circ \Phih]-\mG)\,\dM\right|.
	\end{multline*}
	The first term is estimated using either the standard lifting estimate
	\eqref{eq:standard-interpolation-1} for odd $\kg$, or
	\cref{lem:F-L-F-pi-estimate} for even $\kg$. The second term is estimated
	with \cref{lem:surface-elements} and \eqref{eq:standard-interpolation-1}.
	Thus, we obtain
	\begin{align*}
		\left| \int_{\M} (f\circ \pi\circ \Phih -\bar{f}) \cdot \bar{g} \;\mu[\pi\circ \Phih]\,\dM\right|
		&\leq Ch^{\widehat{\kg}} \|f\|_{C^{2}(\G)}\|\bar{g}\|_{W^{1,1}(\M)} + Ch^{2\kg+1} \|f\|_{C^{1}(\G)}\|\bar{g}\|_{L^{1}(\M)}.
	\end{align*}
	Combining all the estimates yields the assertion.
\end{proof}

In the following two estimates, $\gradG f\cdot g$ and
$\gradGh f_h\cdot(g\circ L)$ denote multiplication of the vector-valued
gradients by the scalar weight, and the norm is the Euclidean norm of the
resulting vector-valued integral.

\begin{theorem}\label{thm:interpolation_estimate_1st_derivatives_standard}
	Assume the hypotheses of this subsection, including the boundary hypothesis. Let $f\in C^{\ku+1}(\GM;\R)$ be a scalar function. Let $f_h \colonequals \II^{\ku}_{h,\kg}(f)$ be the interpolation of $f$, and $g\in W^{1,1}(\GM)$.
	Then
	\begin{equation}
		\left\| \int_{\GM} \gradG f\cdot g \,\dG - \int_{\GhM} \gradGh f_h\cdot (g\circ L) \dGh \right\|
		\leq Ch^{\min\{\ku+1,\kg+1\}} \|f\|_{C^{\ku+1}(\GM)} \|g\|_{W^{1,1}(\GM)},
	\end{equation}
\end{theorem}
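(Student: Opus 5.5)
Following the proof of \cref{thm:interpolation_estimate_-1}, we pull both integrals back to the macro element $\M$ and write everything in flat coordinates. With $\bar f\colonequals f\circ\Phi$, $\bar f_h\colonequals\IBar_h^\ku(\bar f)=f_h\circ\Phih$ and $\bar g\colonequals g\circ\Phi=(g\circ L)\circ\Phih$, the definition of the lifted gradients gives
\[
  \int_{\GM}\gradG f\, g\,\dG-\int_{\GhM}\gradGh f_h\,(g\circ L)\,\dGh
  =\int_\M \bar g\Big(\gradM\bar f\,(\DM\Phi)^+\mG-\gradM\bar f_h\,(\DM\Phih)^+\mGh\Big)\dM .
\]
We insert intermediate terms and split the integrand into three contributions:
\[
  \bar g\,(\gradM\bar f-\gradM\bar f_h)(\DM\Phi)^+\mG
  +\bar g\,\gradM\bar f_h\big((\DM\Phi)^+-(\DM\Phih)^+\big)\mG
  +\bar g\,\gradM\bar f_h(\DM\Phih)^+(\mG-\mGh).
\]

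For the first term, the weight $\bar g\,(\DM\Phi)^+\mG$ is taken componentwise. It lies in $W^{1,1}(\M)$ with norm bounded by $C\|g\|_{W^{1,1}(\GM)}$, since $\Phi\in C^{\kg+2}$ and the regularity assumptions hold. With $|\vec\beta|=1$ and weight regularity $\ell=1$, \cref{lem:improved-first-derivative} then yields $Ch^{\ku+1}\|\bar g\|_{W^{1,1}}\|\bar f\|_{C^{\ku+1}}$ for both parities of $\ku$. This lemma is where the boundary hypothesis is used. Alternatively, we can integrate by parts directly: the volume part is controlled by \cref{R:weighted-flat-interpolation-estimate-odd} with $\vec\beta=0$, and the macro-edge boundary part by the one-dimensional interpolation estimate together with the trace inequality $\|\bar g\|_{L^1(\partial\M)}\le C\|\bar g\|_{W^{1,1}(\M)}$.

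The second and third terms are geometric errors, and only an $h^{\kg+1}$ bound is required, so no cancellation should be needed. In each of them we first replace $\gradM\bar f_h$ by $\gradM\bar f$. The resulting error is $\OO(h^{\ku})$ in $L^\infty$ by \cref{lem:standard-flat-interpolation-estimate}. It multiplies a factor of size $\OO(h^\kg)$, coming from \cref{lem:pseudo-inverse-geometry-mapping} or \cref{lem:surface-elements}, so it contributes $Ch^{\ku+\kg}\|\bar g\|_{L^1}$, and $\ku+\kg\ge\min\{\ku+1,\kg+1\}$.

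What remains is $\int_\M\bar g\,\gradM\bar f\,\big[((\DM\Phi)^+-(\DM\Phih)^+)\mG+(\DM\Phih)^+(\mG-\mGh)\big]\dM$. Pointwise this bracket is only $\OO(h^\kg)$, and closing this gap is the main obstacle. The plan is to linearize the bracket in $\DM\bm e$, where $\bm e\colonequals\Phih-\Phi$. Using the identity for $A^+-B^+$ from the proof of \cref{lem:pseudo-inverse-geometry-mapping} and the area-element expansion from the proof of \cref{cor:weighted-diff-of-area}, the bracket equals $\mathcal M(\xM)[\DM\bm e]+\OO(|\DM\bm e|^2)$. Here $\mathcal M$ is a linear map whose coefficients depend smoothly on $\DM\Phi$ and therefore lie in $W^{1,\infty}$.

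The quadratic remainder is $\OO(h^{2\kg})$, and $2\kg\ge\kg+1$. For the linear part we apply \cref{lem:improved-first-derivative} with $\ell=1$ and interpolation order $\kg$, componentwise to $\partial_j\bm e$, with weights $\bar g\,\gradM\bar f\,\mathcal M$. These weights lie in $W^{1,1}(\M)$ with norm at most $C\|f\|_{C^2}\|g\|_{W^{1,1}}$, which gives $Ch^{\kg+1}$. One point needs care: replacing $(\DM\Phih)^+$ by $(\DM\Phi)^+$ inside $\mathcal M$ must only produce $\OO(h^{2\kg})$ errors. Collecting all terms gives the bound $h^{\min\{\ku+1,\kg+1\}}$, with the norms of $f$ of order at most $\ku+1$, as claimed.
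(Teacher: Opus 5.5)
Your proposal is correct and follows essentially the same route as the paper. You pull back to $\M$ and treat the pure interpolation term with \cref{lem:improved-first-derivative} ($\ell=1$) and weight $\bar g(\DM\Phi)^+\mG$. In the geometric terms you replace $\gradM\bar f_h$ by $\gradM\bar f$ at cost $h^{\ku+\kg}$, then linearize the pseudo-inverse and area-element differences in $\DM(\Phih-\Phi)$ with smooth prefactors, so that \cref{lem:improved-first-derivative} with interpolation order $\kg$ gives $h^{\kg+1}$. The one small difference is that you handle the area-element term by this same linearization for both parities of $\kg$, whereas the paper cites \cref{cor:weighted-diff-of-area} for even $\kg$ (whose proof is that linearization) and \cref{lem:surface-elements} for odd $\kg$. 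Your uniform treatment makes the odd case explicit, where a purely pointwise use of \cref{lem:surface-elements} would only yield $h^{\kg}$.
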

\begin{proof}
	We estimate
  \begin{multline*}
    \left\| \int_{\GM} \gradG f\cdot g \,\dG - \int_{\GhM} \gradGh f_h\cdot (g\circ L)\,\dGh \right\|\\
   	\begin{aligned}
   		&\leq \left\| \int_{\GM} \gradG (f-f_h\circ L^{-1})\cdot g \,\dG \right\| + \left\| \int_{\GM} \gradG (f_h \circ L^{-1}) \cdot g \,\dG - \int_{\GhM} \gradGh f_h\cdot (g\circ L)\,\dGh \right\|\\
			&= \left\| \int_{\M} \gradG (f-f_h\circ L^{-1})\circ \Phi\cdot \bar{g}\mG \,\dM \right\|
			+ \left\| \int_{\M} \left(\gradG (f_h \circ L^{-1})\circ \Phi\mG - \gradGh f_h\circ \PhihM\mGh \right)\bar{g}\dM \right\|.
    \end{aligned}
  \end{multline*}
	We set $\bar{g}\colonequals g\circ \Phi$ and $\bar{f}\colonequals f\circ\Phi$ and write with $\bar{f}_h\colonequals\IBar_h^\ku(\bar{f})$
	\begin{align*}
		\gradG(f-f_h\circ L^{-1}) \circ \Phi & = \gradM(\bar{f} - \bar{f}_h) \cdot (\DM\Phi)^+
	\end{align*}
	and proceed as in \cref{thm:interpolation_estimate_-1} with the estimate
	\begin{align}
		\left\| \int_{\M} \gradM(\bar{f} - \bar{f}_h)\cdot(\DM\Phi)^+\cdot\bar{g}\mG \,\dM \right\|
		\leq C h^{\ku+1}\|f\|_{C^{\ku+1}(\GM)}\| g\|_{W^{1,1}(\GM)}, \label{eq:estimate-df-dfh-DPhi-g_mu}
	\end{align}
	where we used \cref{lem:improved-first-derivative} with $\ell=1$ and the weight function $(\DM\Phi)^+\cdot \bar{g}\mG$ componentwise. For the remaining terms, we write analogously
	\begin{align*}
		\gradG (f_h \circ L^{-1})\circ \Phi
		&= \gradM\bar{f}_h\cdot(\DM\Phi)^+
		&\text{and}&&
		\gradGh f_h\circ \PhihM
		&= \gradM\bar{f}_h \cdot (\DM\PhihM)^+.
	\end{align*}
	For the difference we obtain
	\begin{multline}\label{eq:estimate-differences-of-first-order-derivatives}
		\int_{\M} \left(\gradG (f_h \circ L^{-1})\circ \Phi\mG - \gradGh f_h\circ \PhihM\mGh \right)\bar{g}\dM\\
		  = \int_{\M} \gradM\bar{f}_h\cdot(\DM\Phi)^+ (\mG -\mGh) \bar{g} \,\dM
		  + \int_{\M} \gradM\bar{f}_h\cdot\left((\DM\Phi)^+ - (\DM\PhihM)^+\right)\cdot\bar{g}\mGh\,\dM.
	\end{multline}

	To apply \cref{cor:weighted-diff-of-area} or
	\cref{lem:improved-first-derivative}, respectively, the weights must be
	sufficiently smooth. We therefore replace the discrete factor
	$\gradM\bar f_h$ by $\gradM\bar f$ and, where needed, replace $\mGh$ by
	$\mG$. The errors are controlled by
	\cref{lem:standard-flat-interpolation-estimate,lem:surface-elements,lem:pseudo-inverse-geometry-mapping}:
	\[
		\|\gradM(\bar f_h-\bar f)\|_{L^\infty(\M)}
		\leq Ch^\ku\|f\|_{C^{\ku+1}(\GM)},\quad
		\|\mGh-\mG\|_{L^\infty(\M)} +\|(\DM\Phi)^+-(\DM\PhihM)^+\|_{L^\infty(\M)}
		\leq Ch^\kg .
	\]
	For example, the second term in \eqref{eq:estimate-differences-of-first-order-derivatives} is written as
	\begin{equation*}
    \int_{\M} \gradM\bar{f}_h \cdot \left((\DM\Phi)^+ - (\DM\PhihM)^+\right)\bar{g}\mGh\,\dM
		= \int_{\M} \gradM\bar{f} \cdot \left((\DM\Phi)^+ - (\DM\PhihM)^+\right)\bar{g}\mG\,\dM
		+ \mathcal{O}(h^{\kg+\min\{\ku,\kg\}}).
	\end{equation*}

	The estimate of the first term follows by expressing the pseudo-inverse difference in terms of differences in the Jacobians, $\DM\Phi-\DM\Phih$, with a smooth enough prefactor to apply \cref{lem:improved-first-derivative} with $\ell=1$ componentwise and standard estimates \cref{lem:surface-approximation-estimates,lem:pseudo-inverse-geometry-mapping} for the remaining products of differences. This yields an order $\kg+1$.
	The term in
	\eqref{eq:estimate-differences-of-first-order-derivatives} containing
	$\mG-\mGh$ is treated in the same way. After replacing
	$\gradM\bar f_h$ by $\gradM\bar f$,
	\cref{cor:weighted-diff-of-area} for even $\kg$ and the standard estimate
	from \cref{lem:surface-elements} for odd $\kg$ give order $\kg+1$. The
	replacement error is of order $\ku+\kg$ by
	\cref{lem:standard-flat-interpolation-estimate}. Since $\ku,\kg\geq1$,
	these bounds are of order
	$\min\{\ku+1,\kg+1\}$.
\end{proof}

Similar to the improved estimates for first derivatives on the macro element \cref{lem:improved-first-derivative}, we can look at the surface gradients multiplied with a weight function of higher regularity, and obtain an improved estimate:

\begin{theorem}\label{thm:interpolation_estimate_1st_derivatives}
	Assume the hypotheses of this subsection, including the boundary hypothesis. Let $f\in C^{\ku+2}(\GM;\R)$ be a scalar function. Let $f_h \colonequals \II^{\ku}_{h,\kg}(f)$ be the interpolation of $f$, and $g\in W^{2,1}(\GM)$.
	Then
	\begin{equation}
		\left\| \int_{\GM} \gradG f\cdot g \,\dG - \int_{\GhM} \gradGh f_h\cdot (g\circ L) \dGh \right\|
		\leq Ch^{\min\{\widehat{\ku} , \widehat{\kg}\}} \|f\|_{C^{\ku+2}(\GM)} \|g\|_{W^{2,1}(\GM)},
	\end{equation}
	with $\widehat{\ku}$ and $\widehat{\kg}$ as in \eqref{eq:even-odd-order}.
\end{theorem}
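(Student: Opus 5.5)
The plan is to follow the proof of \cref{thm:interpolation_estimate_1st_derivatives_standard} and reuse its splitting. The only change is that wherever \cref{lem:improved-first-derivative} with $\ell=1$ was used, we now use it with $\ell=2$, and wherever \cref{cor:weighted-diff-of-area} was used with $W^{1,1}$ weights, we now use it with $W^{2,1}$ weights. With $\bar f\colonequals f\circ\Phi$, $\bar g\colonequals g\circ\Phi$ and $\bar f_h\colonequals\IBar_h^\ku(\bar f)$, the difference splits into a function-interpolation term
\[
  \int_\M \gradM(\bar f-\bar f_h)\cdot(\DM\Phi)^+\bar g\mG\,\dM
\]
and the two geometric terms in \eqref{eq:estimate-differences-of-first-order-derivatives}.

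For the function-interpolation term, the weight $(\DM\Phi)^+\bar g\mG$ lies in $W^{2,1}(\M)$, componentwise. This holds because $\Phi\in C^{\kg+2}$ with $\kg\ge1$, so $(\DM\Phi)^+$ and $\mG$ are $C^{2}$, and $\bar g\in W^{2,1}(\M)$ by pullback. \Cref{lem:improved-first-derivative} with $\ell=2$ then gives order $\ku+2$ for even $\ku$ and order $\ku+1$ for odd $\ku$, that is, order $\widehat{\ku}$, with the norm $\|f\|_{C^{\ku+2}}$.

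For the geometric terms, I first replace $\gradM\bar f_h$ by $\gradM\bar f$ and $\mGh$ by $\mG$. The replacement errors are products of two standard errors and are of order $h^{\ku+\kg}$ and $h^{2\kg}$. Both are at least $\min\{\widehat\ku,\widehat\kg\}$, since $\ku+\kg\ge\min\{\ku+2,\kg+2\}$ when $\ku,\kg\ge2$. The case where $\ku$ or $\kg$ equals $1$ needs a separate check: $\widehat{1}=2\le\ku+\kg$, so this is fine as well.

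The area term $\int_\M\gradM\bar f\cdot(\DM\Phi)^+(\mG-\mGh)\bar g\,\dM$ equals the difference of a $\G$-integral and its $L$-lifted $\Gh$-integral of the weight $g_2=(\gradG f)\,g\in W^{2,1}(\GM)$. For even $\kg$, \cref{cor:weighted-diff-of-area} with $\ell=2$ gives order $\kg+2$. For odd $\kg$, the same area-element expansion combined with \cref{lem:improved-first-derivative} gives order $\kg+1$. In both cases this is order $\widehat\kg$.

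For the pseudo-inverse term, I expand $(\DM\Phi)^+-(\DM\Phih)^+$ with the identity from the proof of \cref{lem:pseudo-inverse-geometry-mapping}, but with the factors evaluated at $\Phi$. This gives a linear term $\mathcal{M}[\DM\Phi](\DM\Phi-\DM\Phih)$, with smooth $\mathcal{M}$, plus a remainder quadratic in $\DM\Phi-\DM\Phih$, which is of order $h^{2\kg}$. The linear term is handled by \cref{lem:improved-first-derivative} with interpolation order $\kg$, applied componentwise to $\Phi$ with $\ell=2$. Its weight is built from $\gradM\bar f$, $\bar g$, $\mG$ and derivatives of $\Phi$ up to second order. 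This gives order $\widehat\kg$.

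The main obstacle is regularity. \Cref{lem:improved-first-derivative} with $\ell=2$ requires the weight to be in $W^{2,1}(\M)$ and the interpolated function to be in $C^{\kg+2}$. The weight contains $\gradM\bar f$ and $\mathcal{M}[\DM\Phi]$, so I need $f\in C^3$ and $\Phi\in C^3$. Since $\ku\ge1$, the assumption $f\in C^{\ku+2}$ gives $f\in C^3$. Since $\kg\ge1$, $\Phi\in C^{\kg+2}\subset C^3$. The last point to verify is that the product of $C^2$ factors with a $W^{2,1}$ function stays in $W^{2,1}$. The boundary terms are already handled inside \cref{lem:improved-first-derivative} through the uniform induced boundary triangulation hypothesis. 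Summing all contributions gives order $\min\{\widehat\ku,\widehat\kg\}$.
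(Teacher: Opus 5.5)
Your proposal is correct and follows the paper's proof essentially step by step. It uses the same splitting as in \cref{thm:interpolation_estimate_1st_derivatives_standard}, \cref{lem:improved-first-derivative} with $\ell=2$ for the function-interpolation term and the linearized pseudo-inverse term, \cref{cor:weighted-diff-of-area} with $\ell=2$ for the area term, and replacement errors of order $h^{\ku+\kg}$ and $h^{2\kg}$. Your treatment of odd $\kg$ through the integration-by-parts bound of \cref{lem:improved-first-derivative} is in fact the right justification for order $\kg+1$; the paper attributes this somewhat loosely to the pointwise bounds of \cref{lem:surface-elements,lem:pseudo-inverse-geometry-mapping}, which only give $h^{\kg}$.
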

\begin{proof}
  The proof follows mostly the one of \cref{thm:interpolation_estimate_1st_derivatives_standard}. We only highlight the differences:
  We replace the estimate \eqref{eq:estimate-df-dfh-DPhi-g_mu} by
	  \begin{align}
			\left\| \int_{\M} \gradM(\bar{f} - \bar{f}_h)\cdot(\DM\Phi)^+\cdot\bar{g}\mG \,\dM \right\|
			\leq C h^{\widehat{\ku}}\|f\|_{C^{\ku+2}(\GM)}\| g\|_{W^{2,1}(\GM)}, \label{eq:estimate-df-dfh-DPhi-g_mu_improved}
	\end{align}
	where we used \cref{lem:improved-first-derivative} with the weight function $(\DM\Phi)^+\cdot \bar{g}\mG$ componentwise.

	For the estimate of the integration element difference and Jacobian pseudo-inverse difference in \eqref{eq:estimate-differences-of-first-order-derivatives}, we use the same decompositions as in the preceding proof. For even $\kg$, the smooth-weight terms are estimated by \cref{cor:weighted-diff-of-area} and \cref{lem:improved-first-derivative} with $\ell=2$. For odd $\kg$, the standard estimates from \cref{lem:surface-elements,lem:pseudo-inverse-geometry-mapping} give order $\kg+1=\widehat{\kg}$. The replacement errors again have orders $\ku+\kg$ or $2\kg$ by \cref{lem:standard-flat-interpolation-estimate,lem:surface-elements,lem:pseudo-inverse-geometry-mapping}, and these are at least of order $\min\{\widehat{\ku},\widehat{\kg}\}$.
	\end{proof}

\begin{remark}
	One may formulate analogues of
	\cref{thm:interpolation_estimate_1st_derivatives_standard,thm:interpolation_estimate_1st_derivatives}
	with the lifting $L$ replaced by the closest-point projection $\pi$.
	In this case $\GM$ is replaced by $\pi(\GhM)$, and the functions have to
	have the corresponding regularity on this subsurface. By
	\cref{lem:regularity-interpolated-geometry}, the map
	$\pi\circ\Phih\colon\M\to\pi(\GhM)$ is uniformly regular for $h$
	sufficiently small, so the same pullback arguments are available.
	However, the leading interpolation term corresponding to
	\eqref{eq:estimate-df-dfh-DPhi-g_mu_improved} has the same estimate as in
	the case of $L$. The additional terms caused by replacing $L$ by $\pi$ are
	controlled by the lifting estimate \eqref{eq:standard-interpolation-1} and
	by \cref{lem:F-L-F-pi-estimate}. Thus the final order is the same as for
	the estimates with $L$, and no analogue of the extra improvement in
	\cref{thm:interpolation_estimate_0} is obtained. We therefore keep the
	statements with $L$ only.
\end{remark}

\subsection{Global interpolation errors over approximated surfaces}

So far, all estimates have been derived on a single macro patch, i.e., on subsets $\GM$ or $\GhM$ associated with one macro element $\M$. We now assume that $\G$ is represented by finitely many continuously glued macro-element parametrizations $\Phi_\M$ and that $\Gh$ is obtained by the corresponding elementwise polynomial parametrizations $\PhihM$. Whenever a global estimate uses one of the improved local estimates, the hypotheses of the preceding subsection are imposed on every macro element. For estimates involving \cref{lem:improved-first-derivative}, the additional boundary hypothesis is imposed on every macro element as well. The global estimates follow by summing the local estimates over the fixed set of macro patches. If a local estimate already uses a norm on the whole surface $\G$, this norm also controls the sum, since the number of macro patches is fixed and the relevant closest-point neighbourhoods have uniformly finite overlap.

For Sobolev norms of scalar, vector, or tensor fields on $\G$, derivatives are understood as covariant derivatives. For instance, $\|g\|_{W^{2,1}(\G)}$ contains the $L^1$-norms of $g$, $\gradG g$, and $\gradG^{(2)}g$.

The first estimate is a global version of the local distance estimate \cref{lem:dist_estimate}.

\begin{corollary}\label{cor:dist_estimate_global}
  Let $f\in W^{1,1}(\G)$, and let $\Phi\colon\GBar\to\G$ be a continuous and patchwise $C^{\kg+2}$ parametrization of $\G$. Let $\Phih$ be its elementwise polynomial approximation. Assume that $\kg$ is even and that the hypotheses of the preceding subsection hold on every local patch $\M\in\TriRef$. Let $\mathfrak{L}\colonequals L$ or $\mathfrak{L}\colonequals \pi$. Then
  \begin{align}
    \Big|\int_{\Gh} \dist (f\circ \mathfrak{L})\,\dGh \Big| &\leq C h^{\kg+2}\Norm{f}{W^{1,1}(\G)},
  \end{align}
  with $\dist\colon U_{\delta}(\G)\to\R$ the surface signed-distance function.
\end{corollary}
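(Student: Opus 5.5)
The plan is to split the global integral into the fixed, finite family of macro patches and apply the local estimate \cref{lem:dist_estimate} on each. Concretely,
\[
  \int_{\Gh}\dist\,(f\circ\mathfrak{L})\,\dGh=\sum_{\M\in\TriRef}\int_{\GhM}\dist\,(f\circ\mathfrak{L})\,\dGh .
\]
On each patch I would apply \cref{lem:dist_estimate} with $f_h\colonequals (f\circ\mathfrak{L})|_{\GhM}$. This gives
\[
  \Big|\int_{\GhM}\dist\,(f\circ\mathfrak{L})\,\dGh\Big|\le Ch^{\kg+2}\Norm{f\circ\mathfrak{L}}{W^{1,1}(\GhM)} .
\]
Since the number of macro patches is fixed and independent of $h$, summing these bounds only changes the constant.

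It then remains to bound $\Norm{f\circ\mathfrak{L}}{W^{1,1}(\GhM)}$ by a norm of $f$ on $\G$. The bound is taken elementwise, because the lemma's proof pulls back to $\M$ and uses only $\bar f=(f_h\circ\Phih)\mG\in W^{1,1}(\M)$.

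For $\mathfrak{L}=L$ this is immediate. We have $(f\circ L)\circ\Phih=f\circ\Phi$, so $\bar f=(f\circ\Phi)\mG$. The uniform regularity of $\PhiM$ in \cref{ass:admissible-macro-parametrization} gives $\Norm{\bar f}{W^{1,1}(\M)}\le C\Norm{f}{W^{1,1}(\GM)}$.

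For $\mathfrak{L}=\pi$ we have $(f\circ\pi)\circ\Phih=f\circ(\pi\circ\Phih)$. By \cref{lem:regularity-interpolated-geometry}, the map $\pi\circ\Phih\colon\M\to\pi(\GhM)$ is uniformly regular and piecewise $W^{1,\infty}$, with bounds independent of $h$. The chain rule and a change of variables therefore give
\[
  \Norm{(f\circ\pi\circ\Phih)\mG}{W^{1,1}(\M)}\le C\Norm{f}{W^{1,1}(\pi(\GhM))}.
\]
Here $\pi\circ\Phih$ is continuous on $\M$ and $W^{1,\infty}$ on each element, so the composition $f\circ\pi\circ\Phih$ lies in $W^{1,1}(\M)$ without spurious jumps.

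Finally, I sum over $\M$. The sets $\GM$ partition $\G$. The sets $\pi(\GhM)$ also partition $\G$, because $\pi|_{\Gh}$ is a homeomorphism onto $\G$ by \cref{lem:regularity-interpolated-geometry} and the $\GhM$ partition $\Gh$. In either case
\[
  \sum_{\M}\Norm{f}{W^{1,1}(\mathfrak{L}(\GhM))}\le C\Norm{f}{W^{1,1}(\G)},
\]
which yields the claim.

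The main obstacle is a bookkeeping point rather than a new estimate. \Cref{lem:dist_estimate} is stated for $f_h\in W^{1,1}(\GhM)$, with the norm on the discrete patch, and that norm must be controlled uniformly in $h$ by the norm on $\G$ through the pullback equivalences. The constant $C$ must not depend on $h$. This holds because all the relevant Jacobians ($\DM\Phi$, $\DM\Phih$, $\D\pi$) and area elements ($\mG$, $\mGh$, $\mu[\pi\circ\Phih]$) are bounded above and below uniformly for $h$ sufficiently small, by \cref{lem:regularity-interpolated-geometry,lem:surface-elements}.
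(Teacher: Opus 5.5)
Your proposal is correct and follows essentially the same route as the paper. Like the paper, you split the integral over the finitely many macro patches, apply \cref{lem:dist_estimate} with $f_h=f\circ\mathfrak{L}$, control $\Norm{f\circ\mathfrak{L}}{W^{1,1}(\GhM)}$ by $\Norm{f}{W^{1,1}(\mathfrak{L}(\GhM))}$ through the uniform regularity of $\mathfrak{L}|_{\GhM}$, and sum. Your observation that the sets $\pi(\GhM)$ actually partition $\G$, because $\pi|_{\Gh}$ is a homeomorphism, is slightly sharper than the paper's finite-overlap remark, and your check that $f\circ\pi\circ\Phih$ has no interelement jumps makes explicit a point the paper leaves implicit.
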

\begin{proof}
	For each macro element $\M\in \TriRef$, the uniform regularity of
	$\mathfrak L|_{\GhM}$ gives
	$f\circ \mathfrak L\in W^{1,1}(\GhM)$ and
	\[
		\|f\circ \mathfrak L\|_{W^{1,1}(\GhM)}
		\leq C_\M\|f\|_{W^{1,1}(\mathfrak L(\GhM))}.
	\]
	Thus, we can apply \cref{lem:dist_estimate} on each macro element, and obtain
	\begin{align*}
    \Big|\int_{\Gh} \dist (f\circ \mathfrak{L})\,\dGh \Big|
		&= \Big|\sum_{\M\in \TriRef} \int_{\GhM} \dist (f\circ \mathfrak{L})\,\dGh \Big|
		\leq  \sum_{\M\in \TriRef}  C_{\M} h^{\kg+2}\Norm{f\circ \mathfrak{L}}{W^{1,1}(\GhM)}.
  \end{align*}
	For $\mathfrak L=L$, the sets $L(\GhM)=\GM$ form the exact macro patches.
	For $\mathfrak L=\pi$, the sets $\pi(\GhM)$ cover $\G$ with uniformly
	finite overlap. Since the number of macro elements is fixed, the local
	constants are bounded uniformly and the sum of the local norms is
	controlled by $\|f\|_{W^{1,1}(\G)}$.
\end{proof}

\begin{corollary}\label{cor:global_interpolation_estimate}
	Let $\Phi\colon\GBar\to\G$ be continuous and patchwise sufficiently smooth, and let $\Gh$ be parametrized elementwise over $\T\in\TriBar_h$. Let $f\in C^{\ku+2}(\G;\R)$ be a scalar function and let $f_h \colonequals \II_{h,\kg}^\ku f\in C^0(\Gh;\R)$ be the interpolation of $f$. Assume that the local hypotheses of the corresponding macro-patch estimates hold on every macro element, including the boundary hypothesis where it is required. Then the following global estimates hold, with $\widehat{\ku}$, $\widehat{\kg}$, and $\widehat{\kg}^{(\ell)}$ as in \eqref{eq:even-odd-order}. Let $g_\ell\in W^{\ell,1}(\G)$, $\ell\in\{1,2\}$, then

	\begin{align}
		\Big| \int_{\G} f\cdot g_\ell \,\dG - \int_{\Gh} f_h\cdot (g_\ell\circ L)\,\dGh \Big|
		&\leq Ch^{\min\{\widehat{\ku},\widehat{\kg}^{(\ell)}\}} \|f\|_{C^{\widehat{\ku}}(\G)} \|g_\ell\|_{W^{\ell,1}(\G)},
		\label{eq:global-interpolation-L} \\
		\Big| \int_{\G} f\cdot g_1 \,\dG - \int_{\Gh} f_h\cdot (g_1\circ \pi)\,\dGh \Big|
		&\leq Ch^{\min\{\widehat{\ku}, \widehat{\kg}\}} \|f\|_{C^{\widehat{\ku}}(\G)} \|g_1\|_{W^{1,1}(\G)},
		\label{eq:global-interpolation-pi} \\
		\Big\| \int_{\G} \gradG f\cdot g_1 \,\dG - \int_{\Gh} \gradGh f_h\cdot (g_1\circ L)\,\dGh \Big\|
		&\leq Ch^{\min\{\ku+1,\kg+1\}} \|f\|_{C^{\ku+1}(\G)} \|g_1\|_{W^{1,1}(\G)},
		\label{eq:global-interpolation-first-derivative-standard} \\
		\Big\| \int_{\G} \gradG f\cdot g_2 \,\dG - \int_{\Gh} \gradGh f_h\cdot (g_2\circ L)\,\dGh \Big\|
		&\leq Ch^{\min\{\widehat{\ku}, \widehat{\kg}\}} \|f\|_{C^{\ku+2}(\G)} \|g_2\|_{W^{2,1}(\G)}.
		\label{eq:global-interpolation-first-derivative-improved}
	\end{align}
	The constant $C$ may depend on the macro parametrizations but it is independent of $h$.
\end{corollary}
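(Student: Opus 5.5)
The plan is to reduce each of the four global estimates to the corresponding macro-patch estimate and then sum over the fixed, finite macro grid $\TriRef$. The integrals split additively along the macro partition. For the lifting $L$ we use $\int_{\G}=\sum_{\M}\int_{\GM}$ together with $\int_{\Gh}=\sum_{\M}\int_{\GhM}$, where the restriction of $f_h$ to $\GhM$ is exactly the local interpolation $\II_{h,\kg}^\ku(f|_{\GM})$ from \cref{def:surface-function-interpolation}. For $\pi$ we instead use the partition of $\G$ by the sets $\pi(\GhM)$, which by \cref{lem:regularity-interpolated-geometry} is a partition of $\G$ up to null sets, since $\pi|_{\Gh}$ is a homeomorphism.

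The estimates then follow in order.
\begin{itemize}
\item \eqref{eq:global-interpolation-L}: apply \cref{thm:interpolation_estimate_-1} on each $\GM$ with $g=g_\ell|_{\GM}$.
\item \eqref{eq:global-interpolation-pi}: apply \cref{thm:interpolation_estimate_0} on each $\GhM$.
\item \eqref{eq:global-interpolation-first-derivative-standard}: apply \cref{thm:interpolation_estimate_1st_derivatives_standard}.
\item \eqref{eq:global-interpolation-first-derivative-improved}: apply \cref{thm:interpolation_estimate_1st_derivatives}.
\end{itemize}
In each case the triangle inequality bounds the global error by the sum of the local errors. Each local error is at most $C_\M h^{\text{order}}$ times local norms of $f$ and $g$. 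The local $C^{s}(\GM)$ norms of $f$ are bounded by the global $C^{s}(\G)$ norm. The local $W^{\ell,1}(\GM)$ norms of $g$ sum exactly to at most $\|g\|_{W^{\ell,1}(\G)}$, because the patches are disjoint up to null sets.

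For the $\pi$-estimate, \cref{thm:interpolation_estimate_0} is stated with global norms $\|f\|_{C^{\widehat{\ku}}(\G)}$ and $\|g\|_{W^{1,1}(\G)}$ on the right-hand side. Summing over the fixed number of macro elements therefore only multiplies the constant by $|\TriRef|$. The constant becomes $C=|\TriRef|\max_\M C_\M$, which is independent of $h$ because the macro grid is fixed.

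Two points need care; I expect the first to be the main obstacle.

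The first concerns the vertex-and-edge gluing of the derivative estimates and the regularity of the weights. The gradients $\gradGh f_h$ are only elementwise defined, and $g_\ell$ is merely in $W^{\ell,1}(\G)$. One must check that the restriction $g_\ell|_{\GM}$ lies in $W^{\ell,1}(\GM)$ with norm bounded by the global one. This holds trivially, since restriction to an open subset does not increase Sobolev norms.

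The same restriction argument must also supply the local $C^{\ku+2}(\GM)$ regularity of $f$ from $f\in C^{\ku+2}(\G)$. In particular, no interface terms arise when summing. The boundary integrals from \cref{lem:improved-first-derivative} have already been estimated locally on each $\partial\M$, so they do not need to cancel across macro edges. They are controlled macro-element by macro-element through the uniform induced boundary triangulation hypothesis, which is imposed on every macro element.

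The second point is the norm bookkeeping for $\pi$, which I would treat briefly. Depending on whether one reads \cref{thm:interpolation_estimate_0} as giving norms over a neighbourhood of $\pi(\GhM)$ or over $\G$, one either uses the global norm directly or invokes the uniformly finite overlap of these neighbourhoods, exactly as in the proof of \cref{cor:dist_estimate_global}. Either way, the estimates combine to the asserted bounds with the stated orders.
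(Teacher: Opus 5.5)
Your proposal is correct and matches the paper's proof: you split both integrals along the macro patches ($\GM$ for $L$, $\pi(\GhM)$ for $\pi$), apply the four local theorems patchwise, and collect constants using the fixed number of macro elements together with the additivity (or, for the $\pi$-estimate, the global form) of the norms. Your remark that the sets $\pi(\GhM)$ partition $\G$ up to null sets, because $\pi|_{\Gh}$ is a homeomorphism, is slightly sharper than the paper's ``finite overlap'' phrasing, and it is exactly what the additive splitting of $\int_{\G}$ requires.
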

\begin{proof}
	The continuity of $\Phi$ and the matching Lagrange nodes on macro-element
	interfaces imply that $\G$ and $\Gh$ decompose into macro patches up to
	codimension-one interfaces. Therefore each global integral with the lifting
	$L$ is the sum of the corresponding local integrals over $\GM$ and $\GhM$.
	For the estimate with $\pi$, the local exact patches are $\pi(\GhM)$.
	These patches cover $\G$ with uniformly finite overlap by the regularity of
	the closest-point projection on $\Gh$. Applying
	\cref{thm:interpolation_estimate_-1,thm:interpolation_estimate_0,thm:interpolation_estimate_1st_derivatives_standard,thm:interpolation_estimate_1st_derivatives}
	patchwise gives the estimates on every macro patch.

	It remains only to collect constants. Since the number of macro elements is fixed, the local constants can be replaced by their maximum. The sums of the local Sobolev norms are bounded by the corresponding global norms on $\G$, and the estimates that already use norms on the whole surface are controlled directly by the finite number of patches and the finite overlap of the closest-point neighbourhoods. This proves all assertions.
\end{proof}

In \cref{sec:appendix-higher-order-derivatives} we have extended these results to higher-order lifted derivative estimates, including a derivative-order dependent parity. The arguments in the proofs follow the same structure as
for the first derivative estimates, but include a lot more extra differences that all must be handled by the improved or standard estimates.

\section{Application to geometric quantities}\label{sec:application}

In surface finite element discretizations and their a priori error estimates, a crucial step is usually to estimate the gap between the geometric quantities of the discrete surface $\Gh$ and those of the continuous surface $\G$. In this section we show that, on even geometries, some of these estimates can be improved using the techniques described in \cref{sec:interpolation_errors}. Although not always reported in articles, it seems to be a common observation among researchers working with higher order surface finite elements that even geometries can produce better discretization errors than expected from the theory. We comment on one of these cases below.

We use the notation for normals, projections, surface derivatives, divergence,
and curvatures introduced in \cref{sec:discrete surface}. The estimates below
concern the improved bounds for even geometry order. Therefore, on every macro
element $\M$, we assume the local hypotheses from
\cref{sec:interpolation_errors}: the induced flat triangulation
$\TriBar_{h,\M}$ is symmetric in the sense of
\cref{def:symmetric-triangulation}, the discrete geometry is the elementwise
degree-$\kg$ interpolation of the exact parametrization, and $h$ is
sufficiently small. The symmetry assumption always refers to the flat
triangulation over the macro element. No separate symmetry condition is imposed
on the curved surface elements.

The following so-called non-standard estimate improves the estimates in \citet[Lemma 4.2]{HLL2020Analysis}, which are used in the a priori estimates for surface finite element discretizations of the vector Laplace equation.

\begin{lemma}[Non-standard estimates for even $\kg$]\label{lem:non-standard-estimates}
  Let $\bm{v}\in W^{2,1}(\G;\R^3)$ be a vector field and $\kg\geq 2$ even. For $\mathfrak{L}\colonequals L$ or $\mathfrak{L}\colonequals\pi$ we have the estimates
  \begin{align}
    \left|\Inner{\PPh(\nn\circ\pi)}{\bm{v}\circ \mathfrak{L}}{\Gh}\right| &\leq C h^{\kg+2} \|\bm{v}\|_{W^{2,1}(\G;\R^3)},  \label{eq:Pnh} \\
    \left|\Inner{\nn\circ\pi-\nnh}{\bm{v}\circ \mathfrak{L}}{\Gh}\right| &\leq C h^{\kg+2}\|\bm{v}\|_{W^{2,1}(\G;\R^3)}.  \label{eq:n-nh}
  \end{align}
\end{lemma}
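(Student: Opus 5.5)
Both estimates rest on two facts: the normal defect $\nn\circ\pi-\nnh$ can be written through tangential derivatives of the signed distance, and weighted integrals of the geometry interpolation error gain one or two orders on symmetric meshes. Since $\nnh$ is a unit vector, $\nn\cdot\nnh-1=-\tfrac12\|\nn-\nnh\|^2=\OO(h^{2\kg})$. This gives
\[
  \nn\circ\pi-\nnh=\PPh(\nn\circ\pi)+\big((\nn\cdot\nnh)-1\big)\nnh = \PPh(\nn\circ\pi)+\OO(h^{2\kg}).
\]
Because $2\kg\ge\kg+2$ for $\kg\ge2$, \eqref{eq:n-nh} follows from \eqref{eq:Pnh}. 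The remainder contributes at most $Ch^{2\kg}\|\bm v\|_{L^1(\G)}$, after a change of variables with the uniformly regular liftings $L$ or $\pi$. So the work reduces to \eqref{eq:Pnh}.

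For \eqref{eq:Pnh} I use the identity $\nn=\D\dist$ on $U_\delta$. Restricted to $\Gh$ it gives, elementwise,
\[
  \PPh(\nn\circ\pi)=\PPh\D\dist=\gradGh(\restr{\dist}{\Gh}).
\]
Recall that $\nn$ is extended constantly in normal direction, so $\nn=\nn\circ\pi$ on $\Gh$. The integral then becomes $\Inner{\gradGh\dist}{\bm w_h}{\Gh}$ with $\bm w_h\colonequals\bm v\circ\mathfrak L$. Integrating by parts elementwise on $\Gh$ gives
\[
  -\Inner{\dist}{\gradGh\cdot\bm w_h}{\Gh}
  -\Inner{\dist\,\MeanCurvatureh}{\bm w_h\cdot\nnh}{\Gh}
  +\sum_{\T}\int_{\partial\T_h}\dist\,\bm w_h\cdot\bm\mu_h\,\mathrm{d}s.
\]
Here $\bm\mu_h$ is the elementwise conormal. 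The volume terms have the form $\int_{\Gh}\dist\,f_h$. By \cref{lem:dist_estimate}, summed as in \cref{cor:dist_estimate_global}, they are bounded by $Ch^{\kg+2}\|f_h\|_{W^{1,1}}$ with $f_h=\gradGh\cdot\bm w_h$ and $f_h=\MeanCurvatureh\,\bm w_h\cdot\nnh$. The first weight lies in $W^{1,1}$ elementwise precisely because $\bm v\in W^{2,1}$; this is where two derivatives of $\bm v$ enter. The factors $\MeanCurvatureh$ and $\nnh$ are elementwise smooth with bounded derivatives, but only elementwise. I therefore need a brokenweighted version of \cref{lem:dist_estimate}. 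This is harmless: \cref{cor:weighted-flat-interpolation-estimate} is applied per symmetric pair, and the Poincar\'e step on the convex hull $Q_{\symT}$ must handle piecewise-$W^{1,1}$ weights. To avoid that issue, I replace $\MeanCurvatureh$, $\nnh$, $\PPh$ by their exact counterparts $\MeanCurvature\circ\pi$, $\nn$, $\PP$. By \cref{lem:surface-approximation-estimates} this costs $\|\dist\|_\infty h^{\kg-1}=\OO(h^{2\kg})$, which is acceptable for $\kg\ge2$.

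The main obstacle is the edge terms. The edges of $\Gh$ are images of flat edges, and $\Phih$ is continuous across them. However, $\bm\mu_h$ jumps across an interior edge by $\OO(h^{\kg})$, and $\dist=\OO(h^{\kg+1})$ there. A naive bound therefore gives $h^{2\kg+1}$ per unit length times $\OO(h^{-1})$ total edge length, i.e.\ $h^{2\kg}$, which again suffices since $2\kg\ge\kg+2$. To make this rigorous I sum over interior edges, write $\bm\mu_h^++\bm\mu_h^-$ as the jump, and use that its size is $\OO(h^\kg)$ (by \eqref{eq:lem-surface-approx-estimate3} applied to the tangents of adjacent elements). I bound $\bm w_h$ on edges with a scaled trace inequality, $\sum_E\|\bm w_h\|_{L^1(E)}\le C h^{-1}\|\bm v\|_{W^{1,1}(\G)}$. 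This gives $Ch^{\kg+1}h^{\kg}h^{-1}=Ch^{2\kg}$.

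If this crude count fails because the trace scaling is off, the fallback is to avoid integration by parts altogether. In that approach I write $\PPh\nn$ on $\M$ via $\bm e=\Phih-\Phi$: the tangential component equals $-\DM\Phih(\DM\Phih)^+$ applied to $\DM(\nM\cdot\bm e)$ up to $\OO(h^{2\kg})$, which is a first derivative of the interpolation error. I then apply \cref{lem:improved-first-derivative} with $\ell=2$ componentwise, with weight $\bar{\bm v}\mu$ times smooth factors. This also yields $h^{\kg+2}$, under the boundary hypothesis.
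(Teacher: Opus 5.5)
Your proposal is correct and follows the paper's proof essentially step for step. You reduce \eqref{eq:n-nh} to \eqref{eq:Pnh} via $\nn\cdot\nnh-1=-\tfrac12|\nn-\nnh|^2$, as the paper does; for \eqref{eq:Pnh} you use the same elementwise integration by parts of $\gradGh\dist$, replace the piecewise-smooth volume weight $g_h$ by $g\circ\mathfrak{L}$ at cost $h^{2\kg}$ before invoking \cref{cor:dist_estimate_global}, and bound the edge terms by the same $h^{\kg}\cdot h^{\kg+1}\cdot h^{-1}$ jump/trace count. Your fallback route is unnecessary, and it would additionally need the boundary hypothesis and extra care for $\mathfrak{L}=\pi$, where $\bm v\circ\pi\circ\Phih$ is only piecewise $W^{2,1}$.
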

\begin{proof}
  For the first estimate \eqref{eq:Pnh}, we follow \citet[Lemma 4.2]{HLL2020Analysis} by representing $\nn\doteq\nn\circ\pi$ as the gradient of a distance function and then performing integration by parts to obtain
  \begin{equation*}
    \Inner{\PPh\nn}{\bm{v}\circ \mathfrak{L}}{\Gh} =  \Inner{\gradGh\dist}{\bm{v}\circ \mathfrak{L}}{\Gh}
		= -\sum_{\T\in \TriBar_h}\Inner{\dist}{g_h}{\Phih(\T)} + \Inner{\dist[\nu]}{\bm{v}\circ \mathfrak{L}}{\mathcal{E}_h},
	\end{equation*}
	where $g_h\colonequals \MeanCurvatureh\nnh\cdot \bm{v}\circ \mathfrak{L} + \gradGh\cdot\bm{v}\circ \mathfrak{L}$, $\mathcal{E}_h$ denotes the union of interior edges of the surface triangulation, and $[\nu]$ the jumps of the conormals of adjacent surface elements over the common edge.
	In order to apply \cref{cor:dist_estimate_global} to estimate the first term, we replace $g_h$ by $g\circ \mathfrak{L}$, where $g\colonequals \MeanCurvature\; \nn\cdot \bm{v} + \gradG\cdot\bm{v}\in W^{1,1}(\G)$, i.e., we write
	\begin{equation*}
    \Inner{\PPh\nn}{\bm{v}\circ \mathfrak{L}}{\Gh}
		= \underbrace{-\sum_{\T\in \TriBar_h}\Inner{\dist}{g  \circ \mathfrak{L}}{\Phih(\T)}}_{(i)} +\underbrace{\sum_{\T\in \TriBar_h}\Inner{\dist}{g  \circ \mathfrak{L}-g_h}{\Phih(\T)}}_{(ii)} + \underbrace{\Inner{\dist[\nu]}{\bm{v}\circ \mathfrak{L}}{\mathcal{E}_h}}_{(iii)}.
	\end{equation*}
	As $\|g\|_{W^{1,1}(\G)}\leq C \|\bm{v}\|_{W^{2,1}(\G;\R^3)}$, we apply \cref{cor:dist_estimate_global} to estimate $(i)$ by
	\begin{equation*}
 		\left|\Inner{\dist}{g}{\Gh}\right|
 		\leq C h^{\kg+2} \|g\|_{W^{1,1}(\G)}
 		\leq C h^{\kg+2} \|\bm{v}\|_{W^{2,1}(\G;\R^3)}.
  \end{equation*}
	For $(ii)$ we note that the difference $g \circ \mathfrak{L}-g_h$ can be expressed in differences of the form $\bm{n}\circ\mathfrak{L}- \nnh$ and $\MeanCurvature\circ\mathfrak{L} - \MeanCurvatureh$. Thus, using standard geometric estimates and \cref{lem:interpolation-error-estimates}, we can estimate $(ii)$ by
	\begin{equation*}
		\left|\sum_{\T\in \TriBar_h}\Inner{\dist}{g  \circ \mathfrak{L}-g_h}{\Phih(\T)}\right|\leq \|\dist\|_{L^{\infty}(\Gh)}\|g  \circ \mathfrak{L}-g_h\|_{L^1(\Gh)}\leq Ch^{2\kg}\|\bm{v}\|_{W^{1,1}(\G;\R^3)}.
	\end{equation*}
  As the jumps satisfy the estimate $\|[\nu]\|_{L^{\infty}(\mathcal{E}_h)}\leq C h^{\kg}$, by the trace inequality, the term $(iii)$ satisfies
  \begin{align*}
		\Inner{\dist[\nu]}{\bm{v}\circ \mathfrak{L}}{\mathcal{E}_h}
 		&\leq \|[\nu]\|_{L^{\infty}(\mathcal{E}_h)}\|\dist\|_{L^{\infty}(\mathcal{E}_h)}\|\bm{v}\circ \mathfrak{L}\|_{L^{1}(\mathcal{E}_h)}\\
 		&\leq C h^{2\kg+1}\left(h^{-1}\|\bm{v}\circ \mathfrak{L}\|_{L^{1}(\Gh)}+\|\gradGh(\bm{v}\circ \mathfrak{L})\|_{L^{1}(\Gh)}\right)\\
 		&\leq C h^{2\kg}\|\bm{v}\|_{W^{1,1}(\G;\R^3)}.
  \end{align*}
  As $2\kg\geq \kg+2$ for $\kg\geq 2$, we obtain \eqref{eq:Pnh}.

  For the second estimate~\eqref{eq:n-nh}, we write
  \begin{align*}
 		\Inner{\nn-\nnh}{\bm{v}\circ \mathfrak{L}}{\Gh} &= \Inner{\nn-\nnh}{\PPh(\bm{v}\circ \mathfrak{L}) + (\bm{v}\circ \mathfrak{L}\cdot \nnh)\,\nnh}{\Gh}\\
 		&=  \Inner{\PPh\nn}{\bm{v}\circ \mathfrak{L}}{\Gh} -\Inner{\frac{1}{2}|\nn-\nnh|^2\nnh}{\bm{v}\circ \mathfrak{L}}{\Gh}.
  \end{align*}
  Thus, we obtain the assertion using $\|\nn-\nnh\|_{L^{\infty}(\Gh)}^2\leq C h^{2\kg}\leq C h^{\kg+2}$ and \eqref{eq:Pnh}.
\end{proof}

\begin{remark}
	Lifting the normal vector $\nn$ by $L$ instead of $\pi$ in \cref{lem:non-standard-estimates} results in the same estimates, by employing \cref{lem:F-L-F-pi-estimate}.
\end{remark}

Although these estimates are of higher order in $h$ than those in \citet[Lemma 4.2]{HLL2020Analysis}, they cannot be used directly to infer higher order discretization error estimates for the vector Laplace equation. The main obstruction is that the higher order derivatives required of the vector field $\bm{v}$ cannot be handled appropriately in the existing proofs.

\begin{lemma}\label{lem:weingarten_est}
  For 2-tensor fields $\bm{A}\in W^{1,1}(\G;\R^{3\times 3})$ and even $\kg\geq 2$, with $\mathfrak{L}\colonequals L$ or $\mathfrak{L}\colonequals\pi$, we have the estimate
  \begin{align}
    \left|\Inner{\gradG\nn}{\bm{A}}{\G} - \Inner{\gradGh\nnh}{\bm{A} \circ \mathfrak{L}}{\Gh}\right| &\leq C h^{\kg} \|\bm{A}\|_{W^{1,1}(\G;\R^{3\times 3})}. \label{eq:dn-dnh}
  \end{align}
\end{lemma}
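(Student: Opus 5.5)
We prove \eqref{eq:dn-dnh} by integrating by parts on the discrete surface, which moves the derivative from $\nnh$ onto the weight $\bm{A}\circ\mathfrak{L}$. The resulting volume integrals then involve only normals, and \cref{lem:non-standard-estimates} together with the standard normal estimate \eqref{eq:lem-surface-approx-estimate3} controls them. The pointwise estimate \eqref{eq:lem-surface-approx-estimate4} alone gives only $h^{\kg-1}$, so some integration by parts is needed to gain the extra order.

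\textbf{Step 1 (integration by parts).} Write $\gradG\nn=\PP\D\nn\PP$ and use the surface divergence theorem on $\G$. This gives
\[
  \Inner{\gradG\nn}{\bm{A}}{\G}=-\Inner{\nn}{\operatorname{div}_\G(\PP\bm{A}\PP)}{\G}+(\text{curvature}\times\nn\text{ terms}).
\]
The curvature terms have the form $\MeanCurvature\,\nn\cdot(\ldots)$ and involve no derivative of $\nn$.

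On $\Gh$ we do the same elementwise on each $\Phih(\T)$. This produces the analogous volume terms with $\nnh$, $\PPh$, $\MeanCurvatureh$, and in addition edge terms of the form $\sum_{E}\int_E \nnh\cdot(\bm{A}\circ\mathfrak{L})\,\nu_h$ over interior edges, where conormal jumps $[\nu]$ and jumps of $\nnh$ appear. Since $\nn$ is continuous, the edge contributions on $\G$ cancel, and the discrete edge terms only involve jumps.

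\textbf{Step 2 (volume terms).} Subtract the two volume expressions and group them into three kinds of differences.
\begin{itemize}
\item Area element and lifting differences, $\int_\G F - \int_{\Gh} F\circ\mathfrak{L}$ with a $W^{1,1}$ weight built from $\bm{A}$ and its derivatives. For these we use \cref{cor:global_interpolation_estimate} or \cref{cor:weighted-diff-of-area}; even the crude bounds \eqref{eq:lem-surface-elements1} and \eqref{eq:lem-surface-approx-Dinv} already give order $h^{\kg}$, which suffices.
\item Differences $\nn\circ\pi-\nnh$ tested against $\operatorname{div}(\bm{A})\circ\mathfrak{L}$, which lies only in $L^1$. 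Here \eqref{eq:lem-surface-approx-estimate3} gives $Ch^{\kg}\|\bm{A}\|_{W^{1,1}}$.
\item Differences $\PP-\PPh$ and $\MeanCurvature-\MeanCurvatureh$. The projection differences are $\OO(h^{\kg})$. The term $\MeanCurvatureh$ multiplies $\nnh\cdot\bm{A}\nnh$-type contractions, and these vanish or become $\OO(h^{\kg})$ after projection, because $\PPh\nnh=0$.
\end{itemize}
The derivatives of $\bm{A}$ landing on $\mathfrak{L}$ are uniformly bounded, by the uniform regularity of $L$ and $\pi$ from \cref{lem:regularity-interpolated-geometry}.

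\textbf{Step 3 (edge terms, the main obstacle).} The jumps satisfy $\|[\nu]\|_{L^\infty}\le Ch^{\kg}$ and $\|[\nnh]\|_{L^\infty}\le Ch^{\kg}$. The trace inequality gives $\|\bm{A}\circ\mathfrak{L}\|_{L^1(\mathcal{E}_h)}\le C(h^{-1}\|\bm{A}\|_{L^1}+\|\bm{A}\|_{W^{1,1}})$, which yields only $h^{\kg-1}$. So we must exploit structure.

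The key observation is that on the edge terms $\nnh$ is contracted with $\nu$ or $\PPh$, so the leading contribution is the normal component of $\nu$-jumps. We rewrite $\nnh\cdot[\nu]$ as $\OO(|\nn-\nnh|^2)$ plus terms whose sum over the two sides is second order. Indeed, $\nu_h^\pm$ are tangent to their own elements, so $\nnh^\pm\cdot\nu_h^\pm=0$ and $\nnh^+\cdot\nu_h^-=(\nnh^+-\nnh^-)\cdot\nu_h^-=\OO(h^{\kg})$. We therefore expect the edge integrand to be $\OO(h^{2\kg})$, and the trace inequality then gives $h^{2\kg-1}\ge h^{\kg}$ for $\kg\ge2$ (for $\kg=2$, $2\kg-1=3>2$).

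If this pointwise cancellation fails for the tangential–tangential block of $\bm{A}$, we fall back on avoiding boundary terms altogether. We instead compare $\gradGh\nnh$ and $\gradG\nn\circ\mathfrak{L}$ through the pullback $\DM(\nn\circ\Phi-\nnh\circ\Phih)(\DM\Phi)^+$ on the flat macro element, and apply \cref{lem:improved-first-derivative} with $\ell=1$ componentwise. This gives order $\kg$ for first derivatives of the geometric error with a $W^{1,1}$ weight, and the remaining pseudo-inverse and area differences are handled by \cref{lem:pseudo-inverse-geometry-mapping} and \cref{lem:surface-elements}.
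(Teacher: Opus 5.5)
Your Step~3 fails, and the whole difficulty lies there. At a point of an interior edge with tangent $t$, let $\bm{n}^\pm$ and $\nu^\pm$ denote the discrete normals and outward conormals of the two adjacent elements, and write $\bm{A}$ for $\bm{A}\circ\mathfrak{L}$. All four vectors are orthogonal to $t$. Hence there is a bending angle $\theta_E=\OO(h^{\kg})$ with $\bm{n}^-=\cos\theta_E\,\bm{n}^+-\sin\theta_E\,\nu^+$ and $\nu^-=-\cos\theta_E\,\nu^+-\sin\theta_E\,\bm{n}^+$, and therefore
\[
  \bm{n}^+\cdot\bm{A}\nu^+ + \bm{n}^-\cdot\bm{A}\nu^-
  = \sin\theta_E\cos\theta_E\,\big(\nu^+\cdot\bm{A}\nu^+ - \bm{n}^+\cdot\bm{A}\bm{n}^+\big)
  + \sin^2\theta_E\,\big(\bm{n}^+\cdot\bm{A}\nu^+ + \nu^+\cdot\bm{A}\bm{n}^+\big).
\]
The first-order term is the curvature concentrated on the edges of a piecewise smooth surface. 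It is nonzero whenever $\nu^+\cdot\bm{A}\nu^+\neq\bm{n}^+\cdot\bm{A}\bm{n}^+$, in particular for tangential $\bm{A}$ with a nonzero conormal--conormal block. Your observation $\bm{n}^+\cdot\nu^-=\OO(h^{\kg})$ is correct, but it only gives a first-order bound, and the trace inequality then yields $h^{\kg-1}$. Since your volume terms are $\OO(h^{\kg})$, your identity actually reduces the lemma to an $\OO(h^{\kg})$ bound for $\sum_E\int_E\sin\theta_E\cos\theta_E\,(\nu^+\cdot\bm{A}\nu^+-\bm{n}^+\cdot\bm{A}\bm{n}^+)$. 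That is a cancellation across edges, which you do not prove and which no result in the paper supplies. A sanity check confirms the problem: this part of your argument never uses the symmetric triangulation or the parity of $\kg$. It would therefore also give $h^{\kg}$ for odd $\kg$, contradicting the observed rate $\kg-1$ of $E_{\Weingarten}$ for odd $\kg$ in \cref{tab:flowersurf-direct-integral-orders}.

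The fallback does not repair this. \cref{lem:improved-first-derivative} concerns $\partial_i(\IBar_h^{\order}\bar f-\bar f)$ for a continuous Lagrange interpolant, and its proof integrates by parts over all of $\M$. The field $\nnh\circ\Phih$ is neither the interpolant of $\nn\circ\Phi$ nor continuous across element edges, so that integration by parts reproduces exactly the edge jumps above. Applied to $\Phi$ itself, the lemma controls only first derivatives of $\Phi-\Phih$, whereas the Weingarten map involves second derivatives.

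The missing idea, which is the paper's proof, is to avoid integration by parts and isolate second derivatives of the geometry error. Write $\gradG\nn=-(\DM\Phi^+)^\top II\,\DM\Phi^+$ with $II_{ij}=\partial_i\partial_j\Phi\cdot(\nn\circ\Phi)$, and analogously on $\Gh$. All pseudo-inverse, area-element and normal differences are then $\OO(h^{\kg})$ pointwise by \cref{lem:surface-approximation-estimates,lem:pseudo-inverse-geometry-mapping,lem:surface-elements}. The only critical term is $\int_\M(\nn\circ\Phi)\cdot\DM^2(\Phi-\Phih)\,\bar w\,\dM$, with a weight $\bar w\in W^{1,1}(\M)$ built from $\bm{A}\circ\Phi$, $\DM\Phi^+$ and $\mG$. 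This term is handled by \cref{cor:weighted-flat-interpolation-estimate} with $|\vec\beta|=2$ and $\order=\kg$. Its parity condition, $\kg+2-2$ even, is exactly where the evenness of $\kg$ and the symmetric pairs enter, and it gives $h^{\kg}$ instead of the pointwise $h^{\kg-1}$. The case $\mathfrak{L}=\pi$ then follows from \cref{lem:F-L-F-pi-estimate} with $\ell=1$.
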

\begin{proof}
	Note that $\Weingarten=-\gradG\nn$ is a representation of the Weingarten map. We use the local parametrizations $\Phi$ to express this. For $\xM\in \M$ and $\xG=\Phi(\xM)\in \G$, we write
	the second fundamental form as
	\begin{align*}
		II(\xM) = \begin{pmatrix}
			\partial_1^2\Phi(\xM) \cdot  \nn(\Phi(\xM)) & \partial_1\partial_2\Phi(\xM) \cdot  \nn(\Phi(\xM))\\
			\partial_1\partial_2\Phi(\xM) \cdot  \nn(\Phi(\xM)) & \partial_2^2\Phi(\xM) \cdot  \nn(\Phi(\xM))
		\end{pmatrix}.
	\end{align*}
	The Weingarten map is then given by
	\[
		\gradG\nn(\Phi(\xM)) = - (\DM\Phi(\xM)^{+})^\top II \DM\Phi(\xM)^{+},
	\]
	where $\DM\Phi(\xM)^{+}$ denotes the left pseudo-inverse of $\DM\Phi$, as before.

	Replacing all occurrences of $\Phi$ by $\Phih$, we obtain the discrete second fundamental form $II_h$, and the Weingarten map $\Weingartenh$ at the point $\Phih(\xM)$.
	Now we can write patchwise
	\begin{align}\label{eq:weingarte_aux_eq}
		\big(\gradGh\nnh &,\,\bm{A} \circ L\big)_{\GhM} -\Inner{\gradG\nn}{\bm{A}}{\GM}\notag \\
		& = \Inner{(\DM\Phi(\xM)^{+})^\top II \DM\Phi(\xM)^{+}}{\bm{A} \circ \Phi\,\mG}{\M} - \Inner{((\DM\Phih)^{+})^\top II_{h} (\DM\Phih)^{+}}{\bm{A}\circ\Phi\,\mGh}{\M}\notag \\
		& = \underbrace{\Inner{(\DM\Phi^{+})^{\top}(II -II_h)\DM\Phi^{+}}{\bm{A} \circ \Phi\,\mG}{\M}}_{(i)}\notag\\
		&\quad + \underbrace{\Inner{(\DM\Phi^{+})^{\top} II_h \DM\Phi^{+}}{\bm{A} \circ \Phi\,\mG}{\M}
		- \Inner{((\DM\Phih)^{+})^\top II_h (\DM\Phih)^{+}}{\bm{A} \circ \Phi\,\mGh}{\M}}_{(ii)}
	\end{align}
	The second term $(ii)$ is of order $h^\kg$ by standard estimates. More explicitly, the differences of area elements and pseudo-inverses are controlled by \cref{lem:surface-elements,lem:pseudo-inverse-geometry-mapping}, and all remaining factors are uniformly bounded. The first term $(i)$ is dominated by the difference in the second fundamental form. We write
	\begin{align*}
		II - II_h & = \nn(\Phi(\xM))^i \DM^2\Phi^i(\xM) - \nnh(\Phih(\xM))^i \DM^2(\Phih)^i(\xM)\\
		& = \nn(\Phi(\xM))^i \left(\DM^2\Phi^i(\xM)-\DM^2(\Phih)^i(\xM)\right) + \left(\nn(\Phi(\xM))^i - \nnh(\Phih(\xM))^i\right) \DM^2(\Phih)^i(\xM).
	\end{align*}
	Again, the second term is of order $h^\kg$, using \cref{lem:surface-approximation-estimates} together with the uniform boundedness of $\DM^2\Phih$. The first term contains the difference of second order derivatives of $\Phi$ and $\Phih$ with the smooth weight function $\nn\circ\Phi\in W^{1,1}(\M)$. If we insert these representations and estimates back into \eqref{eq:weingarte_aux_eq}, we can apply \cref{cor:weighted-flat-interpolation-estimate} to obtain the estimate on each macro element. We then sum over all macro elements. This gives the assertion for $\mathfrak{L}=L$.

	For $\mathfrak{L}=\pi$, we use a triangle inequality argument and estimate
	\begin{align*}
	   \left|\Inner{\gradGh\nnh}{\bm{A} \circ L-\bm{A} \circ \pi}{\Gh} \right| &\leq C h^{\kg+1} \|\bm{A}\|_{W^{1,1}(\G;\R^{3\times 3})}
	\end{align*}
	using \cref{lem:F-L-F-pi-estimate}.
\end{proof}

\begin{corollary}\label{cor:gauss-curvature-estimate}
  For $\bm{v},\bm{w}\in W^{1,2}(\G;\R^3)$ and even $\kg\geq 2$, with $\mathfrak{L}\colonequals L$ or $\mathfrak{L}\colonequals\pi$, we have
  \begin{equation}\label{eq:Gauss-curv-estimate}
		\left|\Inner{\GaussCurvature \bm{v}}{\bm{w}}{\G} - \Inner{\GaussCurvatureh \bm{v}\circ \mathfrak{L}}{\bm{w}\circ \mathfrak{L}}{\Gh} \right| \leq C h^{\kg} \|\bm{v}\|_{W^{1,2}(\G;\R^3)}\|\bm{w}\|_{W^{1,2}(\G;\R^3)},
  \end{equation}
  where $\GaussCurvature$ and $\GaussCurvatureh$ denote the Gaussian curvatures of $\G$ and $\Gh$, respectively.
\end{corollary}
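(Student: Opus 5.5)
We reduce the Gaussian curvature estimate to the Weingarten estimate of \cref{lem:weingarten_est}. Pointwise, with $\Weingarten$ symmetric and tangential, $\GaussCurvature=\tfrac12\big((\tr\Weingarten)^2-\tr(\Weingarten^2)\big)$, and the same holds elementwise for $\GaussCurvatureh$ with $\Weingartenh$. We expand the difference of the quadratic expressions around the exact Weingarten map:
\[
  \GaussCurvatureh-\GaussCurvature\circ\mathfrak{L}
  = \tr(\Weingartenh-\Weingarten\circ\mathfrak{L})\,\tr(\Weingarten\circ\mathfrak{L})
   -(\Weingartenh-\Weingarten\circ\mathfrak{L}):(\Weingarten\circ\mathfrak{L})
   +\tfrac12\big((\tr\bm{E}_h)^2-\tr(\bm{E}_h^2)\big),
\]
where $\bm{E}_h\colonequals\Weingartenh-\Weingarten\circ\mathfrak{L}$. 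The first two terms are linear in $\bm{E}_h$, and the last one is quadratic.

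For the linear part, we write the contribution as $\Inner{\Weingartenh}{\bm{A}\circ\mathfrak{L}}{\Gh}-\Inner{\Weingarten}{\bm{A}}{\G}$ plus an area-element correction. Here $\bm{A}\colonequals(\tr\Weingarten\,\bm{I}-\Weingarten)\,(\bm{v}\cdot\bm{w})$, up to the tangential projection, which we absorb into $\bm{A}$ by using $\PP$. Since $\G$ is $C^{\kg+2}$, $\Weingarten$ is smooth. For $\bm{v},\bm{w}\in W^{1,2}$, Hölder gives $\bm{v}\cdot\bm{w}\in W^{1,1}$ with $\|\bm{v}\cdot\bm{w}\|_{W^{1,1}}\le C\|\bm{v}\|_{W^{1,2}}\|\bm{w}\|_{W^{1,2}}$. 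Hence $\|\bm{A}\|_{W^{1,1}(\G)}\le C\|\bm{v}\|_{W^{1,2}}\|\bm{w}\|_{W^{1,2}}$, and \cref{lem:weingarten_est} (using $\Weingarten=-\gradG\nn$) bounds this term by $Ch^{\kg}$. Two details need care. First, the term $\Inner{\GaussCurvature\circ\mathfrak L\,\bm v\circ\mathfrak L}{\bm w\circ\mathfrak L}{\Gh}-\Inner{\GaussCurvature\bm v}{\bm w}{\G}$ must be rewritten consistently; it is bounded by $Ch^{\kg}$ using \cref{lem:surface-elements} for $\mathfrak L=L$, and additionally \cref{lem:interpolation-error-estimates}/\cref{lem:F-L-F-pi-estimate} for $\mathfrak L=\pi$, since only $L^1$-integrability of $\bm v\cdot\bm w$ is needed. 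Second, the expansion of $\GaussCurvatureh$ should be done consistently in terms of $\Weingartenh$ rather than splitting into $\tr$ and $:$ separately. Both are routine.

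The main obstacle is the quadratic term $\int_{\Gh}\big((\tr\bm{E}_h)^2-\tr(\bm{E}_h^2)\big)(\bm{v}\cdot\bm{w})\circ\mathfrak{L}\,\dGh$. Pointwise, \eqref{eq:lem-surface-approx-estimate4} only gives $\|\bm{E}_h\|_{L^\infty}\le Ch^{\kg-1}$, so the naive bound is $h^{2\kg-2}$. This is at least $h^{\kg}$ only for $\kg\ge2$, which is exactly the hypothesis. For $\kg\ge2$ we have $2\kg-2\ge\kg$, so Hölder with $\|(\bm v\cdot\bm w)\circ\mathfrak L\|_{L^1(\Gh)}\le C\|\bm v\|_{L^2}\|\bm w\|_{L^2}$ closes the estimate.

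To make the linear step rigorous, we would verify that \cref{lem:weingarten_est} applies to weights of the form $\bm{A}=\bm{B}\,(\bm v\cdot\bm w)$ with smooth $\bm B$. Its proof only used $\bm A\circ\Phi\,\mG\in W^{1,1}(\M)$ as a weight in \cref{cor:weighted-flat-interpolation-estimate}, so this holds. Summing over macro elements then yields \eqref{eq:Gauss-curv-estimate}.
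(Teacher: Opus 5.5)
Your proposal is correct and follows essentially the same route as the paper. You expand the quadratic Gaussian-curvature expression, treat the part linear in $\Weingartenh-\Weingarten\circ\mathfrak{L}$ by \cref{lem:weingarten_est} with the $W^{1,1}$ weight $(\tr\Weingarten\,\bm I-\Weingarten)(\bm v\cdot\bm w)$, and bound the quadratic remainder by $h^{2\kg-2}\le h^{\kg}$. Your explicit weight is more precise than the paper's phrase ``weights of the form $\bm v\otimes\bm w$''. The extra checks you flag are routine, as you say: \cref{lem:weingarten_est} is already stated for arbitrary $\bm A\in W^{1,1}$, and for $\mathfrak{L}=\pi$ the area-consistency term follows directly from \eqref{eq:lem-surface-elements2} without \cref{lem:F-L-F-pi-estimate}.
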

\begin{proof}
  This is a direct consequence of \cref{lem:weingarten_est} using the identity $K = \frac{1}{2}(\tr(\gradG\nn)^2 - \tr((\gradG\nn)^2))$ and its discrete analogue. After expanding the difference of the quadratic terms, the part that is linear in $\gradG\nn-\gradGh\nnh$ is tested with tensor weights of the form $\bm{v}\otimes\bm{w}$ multiplied by smooth factors depending on $\gradG\nn$. These weights belong to $W^{1,1}(\G;\R^{3\times3})$ by Hölder's inequality. The remaining part is quadratic in the standard $L^\infty$ error for $\gradG\nn-\gradGh\nnh$ and is therefore of order $h^{2\kg-2}$, which is bounded by $h^\kg$ for $\kg\geq 2$.
\end{proof}

\begin{remark}
  In the analysis of discretization schemes for the surface Stokes equations, \citet[Remark 2.4]{HP2024Parametric} state the estimate \eqref{eq:Gauss-curv-estimate} as an observation. For certain schemes in that work, this new proof improves the a priori error theory to the observed orders of the investigated schemes.
\end{remark}

\section{Numerical experiments}\label{sec:numerical-experiments}

We now compare the convergence orders predicted in the preceding sections with numerical experiments. The main tests use the macro-element construction from the analysis together with symmetric red refinements of the reference mesh. Additional tests outside the scope of the main theorems are collected in \cref{sec:additional-numerical-experiments}.

All numerical experiments are implemented in the \textsc{Dune} framework \citep{BastianEtAl2008GenericGridInterfaceI,BastianEtAl2008GenericGridInterfaceII,BBD2021Dune,Dune2-10,EGMPS2025Concepts}. The discrete surfaces $\Gh$ are represented with \textsc{Dune-CurvedGrid} \citep{PS2020DuneCurvedGrid}, using \textsc{Dune-FoamGrid} \citep{SKSF2017DuneFoamGrid} for the reference grid and red refinement. The newest-vertex bisection tests in \cref{sec:additional-numerical-experiments} use \textsc{Dune-ALUGrid} \citep{ADKN2016DuneALUGrid,AK2017NewestVertex}. Integrals are computed with Gauss quadrature rules of sufficiently high degree. For $L^\infty$ norms, we use evaluation points combining the points from a Gauss quadrature rule and additional points on the boundary of the elements. All numerical tests perform 6 levels of uniform refinement steps of the grid and approximate the stepwise experimental order of convergence of the error quantities specified below.
The code and data needed to reproduce the results are available from Zenodo \citep{Praetorius2026OddGeometriesCode}.

\subsection{Test surface}

The test surface is a deformed sphere surface, see \cref{fig:flower_surface}. It is given as the zero level set of an implicit function $\phi\colon\R^3\to\R$. Let $(r,\theta,\varphi)$ be the spherical coordinates associated with $\xG(r,\theta,\varphi)=(r\sin\theta\cos\varphi, r\sin\theta\sin\varphi,r\cos\theta)\in\R^3$. We introduce a variation of a unit radius,
\[
	R(\theta,\varphi)
		\colonequals 1-0.1\cos(2\theta)-0.2\sin(\theta)^3\sin(3\varphi).
\]
This perturbation is related to low-order real spherical-harmonic modes. With this radius, we define an implicit representation of the deformed sphere surface, by
\[
	\phi(\xG(r,\theta,\varphi))\colonequals r^2-R(\theta,\varphi)^2,\qquad
	\G\colonequals\{\xG\in\R^3:\phi(\xG)=0\}.
\]
The normal vector, Weingarten map, and curvatures used in the computations are obtained from the first and second derivatives of $\phi$. As reference surface $\GBar$, we use a polytopal approximation of the sphere in the proximity $U_\delta$ of $\G$, which is generated with the \textsc{gmsh} meshing tool \citep{GR2009Gmsh}.

\begin{figure}[ht]
    \begin{subfigure}{0.24\linewidth}
        \includegraphics[width=.95\textwidth]{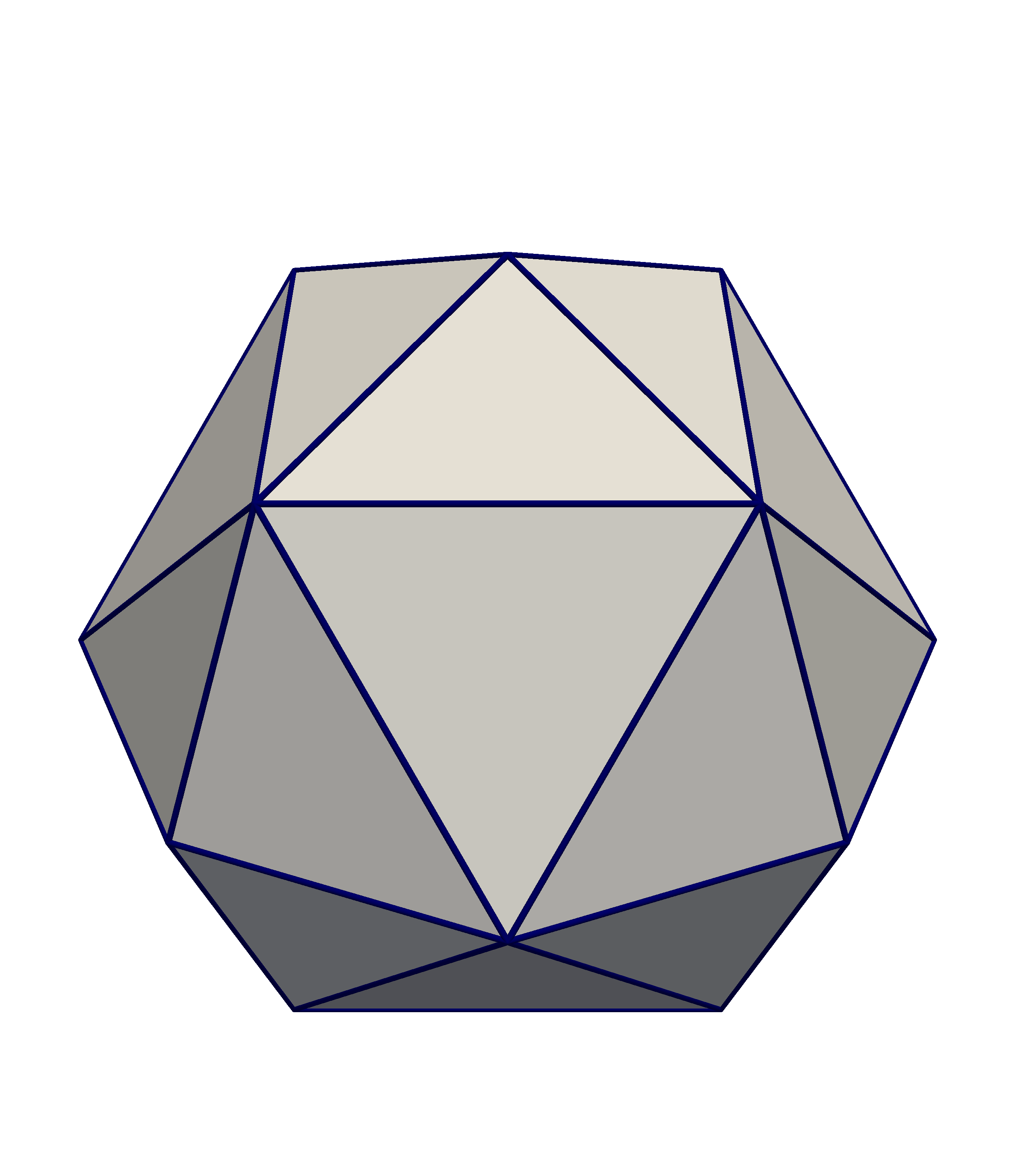}%
    \end{subfigure}\hfill%
    \begin{subfigure}{0.24\linewidth}
        \includegraphics[width=.95\textwidth]{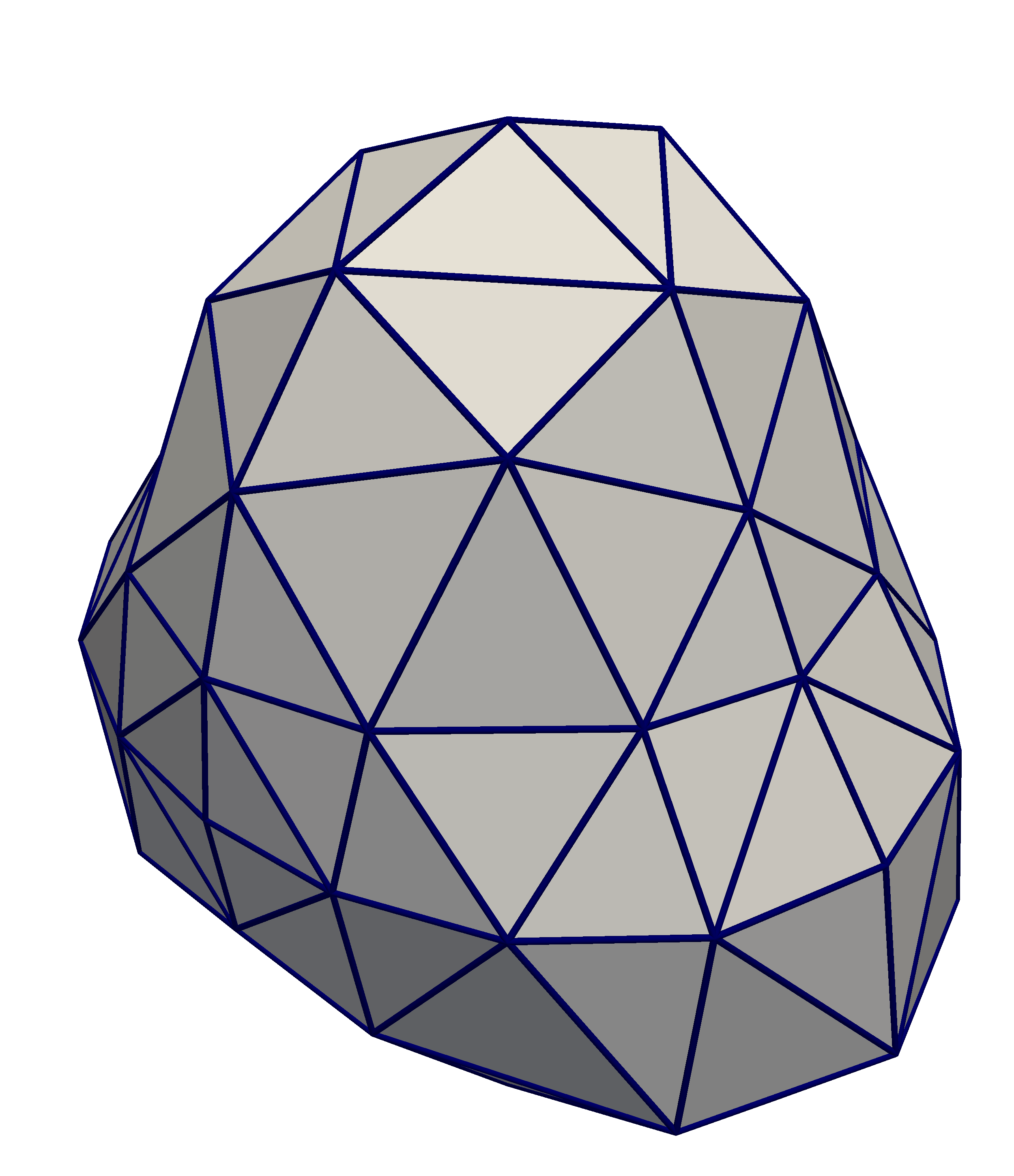}%
    \end{subfigure}\hfill%
    \begin{subfigure}{0.24\linewidth}
        \includegraphics[width=.95\textwidth]{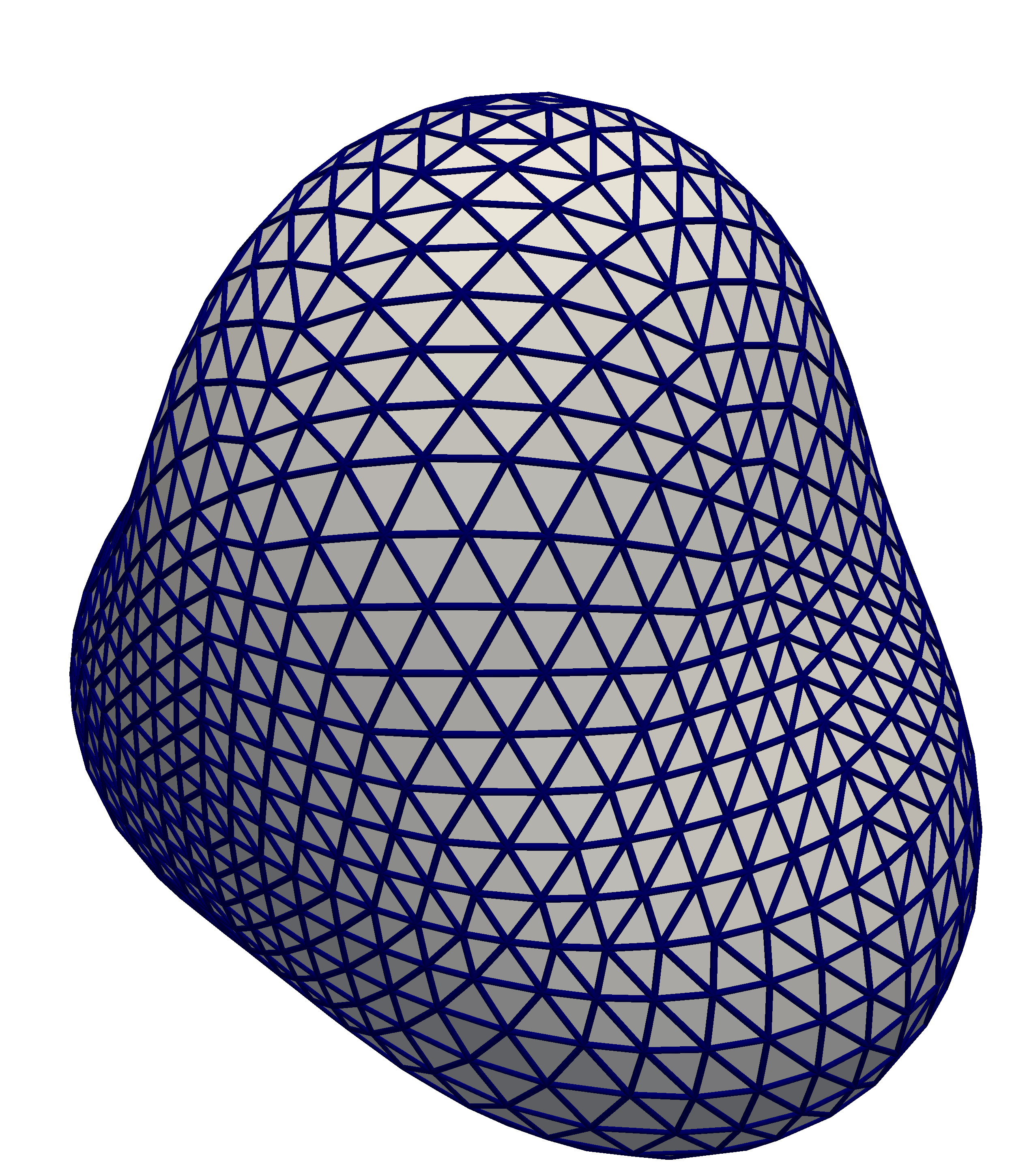}%
    \end{subfigure}\hfill%
    \begin{subfigure}{0.24\linewidth}
        \includegraphics[width=.95\textwidth]{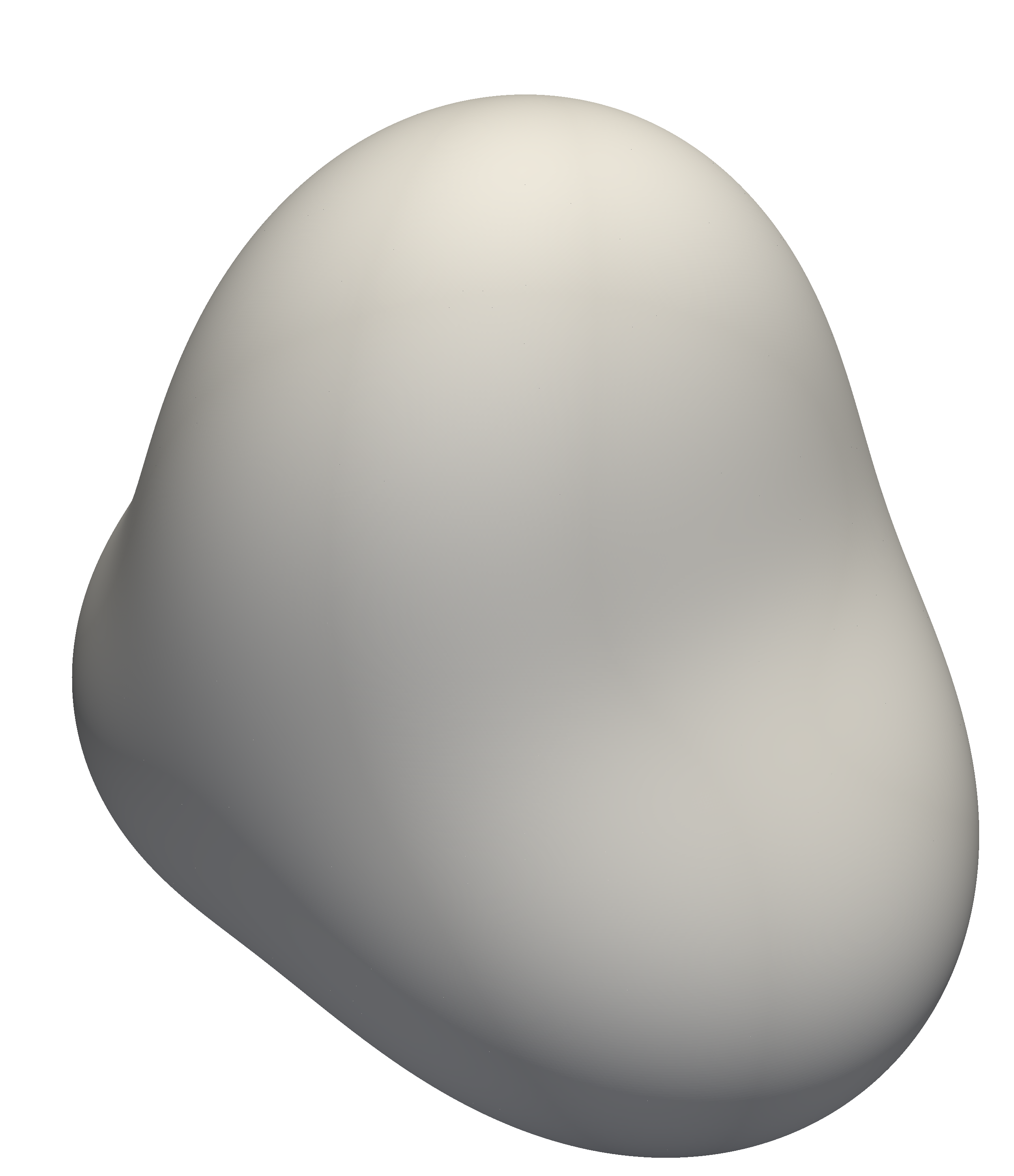}%
    \end{subfigure}
    \caption{Deformed sphere used as the test surface. Left: reference coarse sphere grid $\GBar$. Centre: piecewise linear discrete surfaces $\G_h^1$ with refinement levels 1 and 3. Right: visualization of the smooth surface $\G$.}\label{fig:flower_surface}
    \figurealt{From left to right, a coarse triangulated sphere, piecewise linear approximations of the deformed sphere after one and three refinement levels, showing increasingly fine and nearly uniform triangles, and the smooth deformed sphere, with threefold azimuthal and twofold polar variation.}
\end{figure}

From the points $\xM\in\GBar$ on the reference surface, we construct a smooth parametrization
\[
	\Phi\colon\GBar\to\R^3;\;\Phi(\xM) = \Phi(\xM(r,\theta,\varphi)) \colonequals \xG(R(\theta,\varphi),\theta,\varphi)
\]
in terms of the spherical coordinates associated to $\xM$ with the explicit projection of the radius $r\mapsto R$.

The closest-point projection $\pi(\xGh)$ for points $\xGh\in\Gh$ is defined by an iterative solution process for equation \eqref{eq:closest-point-projection}, see, e.g., \citet{DemlowDziuk2007Adaptive}.

\subsection{Interpolation estimates}

We first test the interpolation estimates from \cref{sec:interpolation_errors}.
On the flat macro element $\M$, let $\bar f\colonequals f|_{\M}$ and
$\bar f_h\colonequals\IBar_h^\kg\bar f$.
Rather than selecting one derivative associated with a fixed multi-index, we
compute the full derivative tensor $\DM^{(\kd)}(\bar f_h-\bar f)$ and contract it
with a tensor field $V^{(\kd)}$ of the same rank. More precisely, the measured
quantities are
\[
	E^{\mathrm{flat}}_\kd \colonequals \left|\sum_{\T\in\TriM_{h,\M}}\int_{\T}
	V^{(\kd)}:\DM^{(\kd)}(\bar f_h-\bar f)\,\dM\right|,
	\qquad \kd=0,1,2,3,
\]
where $:$ denotes the full Frobenius contraction, including ordinary
multiplication for $\kd=0$. Each contraction is a finite sum of the
multi-index quantities estimated in
\cref{cor:weighted-flat-interpolation-estimate,R:weighted-flat-interpolation-estimate-odd,lem:improved-first-derivative},
and hence has the same predicted order. We use
\begin{align*}
	V^{(0)}(x)      & = \sin(x_1 x_2 x_3), &
	V^{(1)}(x)_i    & = \sin(x_i),\\
	V^{(2)}(x)_{ij} & = \sin(x_i)\cos(x_j), &
	V^{(3)}(x)_{ijk}& = \sin(x_i)\cos(x_j)e^{x_k}.
\end{align*}
The function being interpolated, restricted to $\M$ in the flat test and to
$\G$ in the surface test below, is
\[
	f(x) = \sin(x_1) + \cos(2x_2) e^{3x_3}.
\]

\cref{tab:macro-interpolation-error-orders} reports the observed orders on one
macro element. The first derivative behaves like the function-value case, as
expected from \cref{lem:improved-first-derivative} with a sufficiently smooth
weight. The second- and third-derivative cases exhibit the derivative-dependent
parity predicted by
\cref{cor:weighted-flat-interpolation-estimate,R:weighted-flat-interpolation-estimate-odd}.

\begin{table}[ht]
	\centering
	\begin{tabular}{lrrrrrrll}
		\hline
		Derivative & $\order=1$ & $\order=2$ & $\order=3$ & $\order=4$ & $\order=5$ & $\order=6$ & $\order$ odd & $\order$ even \\
		\hline
		$\kd=0$ & $2.00$ & $4.01$ & $4.00$ & $6.01$ & $6.00$ & $7.93$ & $\order+1$ & $\order+2$ \\
		$\kd=1$ & $1.99$ & $4.00$ & $3.99$ & $6.00$ & $5.99$ & $8.00$ & $\order+1$ & $\order+2$ \\
		$\kd=2$ & $0.00$ & $1.99$ & $1.84$ & $3.97$ & $3.91$ & $5.87$ & $\order-1$ & $\order$ \\
		$\kd=3$ & $0.00$ & $0.00$ & $2.00$ & $1.97$ & $3.44$ & $3.82$ & $\order-1$ & $\order-2$ \\
		\hline
	\end{tabular}
	\caption{Asymptotic experimental orders of convergence for weighted interpolation-error integrals $E^{\mathrm{flat}}_\kd$ on the macro element patch $\M$. Compare \cref{cor:weighted-flat-interpolation-estimate,R:weighted-flat-interpolation-estimate-odd,lem:improved-first-derivative}.}
	\label{tab:macro-interpolation-error-orders}
\end{table}

For the lifted surface test, let
$f_h\colonequals\II_{h,\kg}^{\ku}f$. We measure
\begin{align*}
	E^{\mathrm{surf}}_0 &\colonequals
		\left|\int_{\G}f\,\dG-\int_{\Gh}f_h\,\dGh\right|, &
	E^{\mathrm{surf}}_1 &\colonequals
		\left\|\int_{\G}\gradG f\,\dG
		-\int_{\Gh}\gradGh f_h\,\dGh\right\|.
\end{align*}
Thus, the scalar absolute value is used for $\kd=0$, while for $\kd=1$ all
components of the surface-gradient error are integrated and the Euclidean,
equivalently Frobenius, norm is taken afterwards. These are the global
estimates in \cref{cor:global_interpolation_estimate} with constant weight
$g_1=g_2=1$. \cref{tab:interpolation-error-orders-low} reports the
function-value and first-derivative cases. The corresponding higher-order
tensor errors are collected in
\cref{sec:numerical-interpolation-estimates}.

\begin{table}[ht]
	\centering
	\setlength{\tabcolsep}{5pt}
	\begin{subtable}{0.48\textwidth}
		\centering
		\begin{tabular}{c|rrrrrr}
			\diagbox[width=3.8em]{$\ku$}{$\kg$} & $1$ & $2$ & $3$ & $4$ & $5$ & $6$\\
			\hline
			$1$ & $1.9$ & $2.0$ & $2.0$ & $2.0$ & $2.0$ & $2.0$\\
			$2$ & $2.0$ & $3.9$ & $4.0$ & $4.0$ & $4.0$ & $4.0$\\
			$3$ & $2.0$ & $4.0$ & $4.0$ & $4.0$ & $4.0$ & $3.9$\\
			$4$ & $2.0$ & $4.0$ & $4.0$ & $6.0$ & $6.0$ & $6.0$\\
			$5$ & $2.0$ & $4.0$ & $4.0$ & $6.0$ & $6.0$ & $5.6$\\
			$6$ & $2.0$ & $4.0$ & $4.0$ & $6.0$ & $6.0$ & $8.0$\\
		\end{tabular}
		\caption{$\kd=0$}
	\end{subtable}
	\hfill
	\begin{subtable}{0.48\textwidth}
		\centering
		\begin{tabular}{c|rrrrrr}
			\diagbox[width=3.8em]{$\ku$}{$\kg$} & $1$ & $2$ & $3$ & $4$ & $5$ & $6$\\
			\hline
			$1$ & $2.0$ & $2.0$ & $2.0$ & $2.0$ & $2.0$ & $2.0$\\
			$2$ & $2.0$ & $4.0$ & $4.0$ & $4.0$ & $4.0$ & $4.0$\\
			$3$ & $2.0$ & $4.0$ & $4.0$ & $4.0$ & $4.0$ & $4.0$\\
			$4$ & $2.0$ & $4.0$ & $4.0$ & $6.0$ & $6.0$ & $6.0$\\
			$5$ & $2.0$ & $4.0$ & $4.0$ & $6.0$ & $6.0$ & $6.0$\\
			$6$ & $2.0$ & $4.0$ & $4.0$ & $6.0$ & $6.0$ & $8.0$\\
		\end{tabular}
		\caption{$\kd=1$}
	\end{subtable}
	\caption{Experimental orders of convergence for lifted interpolation integrals $E^{\mathrm{surf}}_0$ with $\kd=0$ and $E^{\mathrm{surf}}_1$ with $\kd=1$ on the deformed sphere under red refinement. Rows give $\ku$ and columns give $\kg$. Compare \cref{cor:global_interpolation_estimate}.}
	\label{tab:interpolation-error-orders-low}
\end{table}

\subsection{Application to geometric quantities}

The following numerical tests compare geometric quantities on $\Gh$ with their smooth counterparts on $\G$.
We use one smooth scalar test function $f$ as before, two smooth vector fields $\bm{v}(x)=V^{(1)}(x)$ and $\bm{w}(x)_i=\cos(x_i) e^{x_i}$, and one smooth tensor field $\bm{A}(x)=V^{(2)}(x)$.

The quantities most directly related to the estimates proved in the previous sections are
\begin{align*}
	E_{\rho}
		&\colonequals \left|\int_{\Gh} (f\circ\mathfrak{L})\,\rho\,\dGh\right|,
	&
	E_{\Weingarten}
		&\colonequals \left|\int_{\Gh} (\Weingarten\circ\mathfrak{L}-\Weingartenh):(\bm{A}\circ\mathfrak{L})\,\dGh\right|,
	\\
	E_{\PPh\nn}
		&\colonequals \left|\int_{\Gh} (\bm{v}\circ\mathfrak{L})\cdot \PPh(\nn\circ\mathfrak{L})\,\dGh\right|,
	&
	E_{\GaussCurvature}
		&\colonequals \left|\int_{\Gh} (\GaussCurvature\circ\mathfrak{L}-\GaussCurvatureh)(\bm{v}\circ\mathfrak{L})\cdot(\bm{w}\circ\mathfrak{L})\,\dGh\right|,
	\\
	E_{\nn-\nnh}
		&\colonequals \left|\int_{\Gh} (\bm{v}\circ\mathfrak{L})\cdot(\nn\circ\mathfrak{L}-\nnh)\,\dGh\right|,
	&
	E_{\GaussCurvature,\PPh}
		&\colonequals \left|\int_{\Gh} (\GaussCurvature\circ\mathfrak{L}-\GaussCurvatureh)\PPh(\bm{v}\circ\mathfrak{L})\cdot\PPh(\bm{w}\circ\mathfrak{L})\,\dGh\right|,
\end{align*}
with lifting $\mathfrak{L}=L$ or $\mathfrak{L}=\pi$.
The first column contains the distance integral from
\cref{cor:dist_estimate_global} and the two normal-vector quantities from
\eqref{eq:Pnh} and \eqref{eq:n-nh}. The second column corresponds to the
Weingarten estimate \eqref{eq:dn-dnh}, the Gaussian curvature estimate
\eqref{eq:Gauss-curv-estimate}, and the projected curvature inner product used
in the analysis of a surface Stokes equation by \citet{HP2024Parametric}. All
quantities above are written as integrals on $\Gh$. For the Weingarten and
Gaussian curvature errors, this differs from the difference between the exact-
and discrete-surface integrals in the cited estimates by the integration error
for a smooth quantity. This additional term is of higher order and does not
change the rates being tested. The quantity
$E_{\GaussCurvature,\PPh}$ differs from $E_{\GaussCurvature}$ only by the
additional discrete tangential projections on the test fields.

We first consider surface parametrizations from refined macro-elements as used
in the analysis. Since the parametrization lifting $L$ is tied directly to this
construction and is used in the main interpolation arguments, we report the
complete set of integral quantities for $\mathfrak{L}=L$ in
\cref{tab:flowersurf-direct-integral-orders}. For $\kg=6$, the last refinement
level is already affected by roundoff in some integral quantities, and we
therefore use the last stable refinement step.

\begin{table}[ht]
	\centering
	\begin{tabular}{lrrrrrrll}
		\hline
		Quantity & $\kg=1$ & $\kg=2$ & $\kg=3$ & $\kg=4$ & $\kg=5$ & $\kg=6$ & $\kg$ odd & $\kg$ even \\
		\hline
		$E_\rho$ & $2.00$ & $4.03$ & $4.00$ & $7.27$ & $5.99$ & $9.66$ & $\kg+1$ & $\kg+2$ \\
		$E_{\PPh\nn}$ & $2.00$ & $3.99$ & $4.00$ & $6.00$ & $5.99$ & $7.98$ & $\kg+1$ & $\kg+2$ \\
		$E_{\nn-\nnh}$ & $2.04$ & $4.00$ & $4.00$ & $6.00$ & $5.99$ & $7.98$ & $\kg+1$ & $\kg+2$ \\
		$E_{\Weingarten}$ & $0.00$ & $2.13$ & $2.01$ & $3.95$ & $4.05$ & $5.88$ & $\kg-1$ & $\kg$ \\
		$E_{\GaussCurvature}$ & $0.00$ & $2.00$ & $2.00$ & $4.00$ & $4.00$ & $5.98$ & $\kg-1$ & $\kg$ \\
		$E_{\GaussCurvature,\PPh}$ & $0.01$ & $2.00$ & $2.00$ & $3.99$ & $3.99$ & $5.95$ & $\kg-1$ & $\kg$ \\
		\hline
	\end{tabular}
	\caption{Asymptotic experimental orders of convergence for the integral quantities on the deformed sphere surface using the macro-element parametrization $\Phih$ with red refinement and lifting $\mathfrak{L}=L$. The last two columns give the expected odd and even rates.}
	\label{tab:flowersurf-direct-integral-orders}
\end{table}

For the distance and normal-vector integrals the odd orders show the standard rate $\kg+1$, while the even orders show the improved rate $\kg+2$ predicted by the analysis. For the Weingarten and Gaussian curvature terms the same odd-even structure appears shifted by two derivatives: the observed rates are approximately $\kg-1$ for odd $\kg$ and $\kg$ for even $\kg$.

In addition to these integral quantities, we approximate the following $L^\infty$-norms by taking maxima over the evaluation points used in the numerical computations:
\begin{align*}
	N_{\rho}
		&\colonequals \| (f\circ\mathfrak{L})\rho \|_{L^\infty(\Gh)},&
	N_{\nn}
		&\colonequals \|\nn\circ\mathfrak{L}-\nnh\|_{L^\infty(\Gh)},\\
	N_{\Weingarten}
		&\colonequals \|\Weingarten\circ\mathfrak{L}-\Weingartenh\|_{L^\infty(\Gh)},&
	N_{\GaussCurvature}
		&\colonequals \|\GaussCurvature\circ\mathfrak{L}-\GaussCurvatureh\|_{L^\infty(\Gh)}.
\end{align*}
These norm errors are not expected to show the same cancellation as the integrals.
The approximate $L^\infty$-norms in \cref{tab:flowersurf-direct-norm-orders} show the standard pointwise behaviour. This separates the cancellation in the integral quantities from ordinary pointwise approximation of the geometry.

\begin{table}[ht]
	\centering
	\begin{tabular}{lrrrrrrl}
		\hline
		Quantity & $\kg=1$ & $\kg=2$ & $\kg=3$ & $\kg=4$ & $\kg=5$ & $\kg=6$ & expected \\
		\hline
		$N_\rho$ & $2.00$ & $3.00$ & $4.02$ & $5.00$ & $6.02$ & $6.93$ & $\kg+1$ \\
		$N_{\nn}$ & $1.00$ & $2.01$ & $2.99$ & $4.00$ & $5.00$ & $6.03$ & $\kg$ \\
		$N_{\Weingarten}$ & $0.00$ & $1.02$ & $2.02$ & $3.00$ & $4.02$ & $4.92$ & $\kg-1$ \\
		$N_{\GaussCurvature}$ & $0.00$ & $0.98$ & $2.00$ & $3.00$ & $4.00$ & $4.96$ & $\kg-1$ \\
		\hline
	\end{tabular}
	\caption{Asymptotic experimental orders of convergence for the approximate $L^\infty$-norms on the deformed sphere surface using the macro-element parametrization $\Phih$ with red refinement and lifting $\mathfrak{L}=L$. The last column gives the expected pointwise rate.}
	\label{tab:flowersurf-direct-norm-orders}
\end{table}

The estimates for the geometric quantities allow both liftings $L$ and $\pi$, see
\cref{cor:dist_estimate_global,lem:non-standard-estimates,lem:weingarten_est,cor:gauss-curvature-estimate}.
The two liftings also give virtually identical experimental orders. It is
therefore enough to retain one representative quantity from each of the two
integral regimes and one pointwise quantity for $\mathfrak{L}=\pi$.
\cref{tab:flowersurf-direct-orders-lifting-pi} verifies the same rates without
duplicating the complete tables for $\mathfrak{L}=L$.

\begin{table}[ht]
	\centering
	\begin{tabular}{lrrrrrrll}
		\hline
		Quantity & $\kg=1$ & $\kg=2$ & $\kg=3$ & $\kg=4$ & $\kg=5$ & $\kg=6$ & $\kg$ odd & $\kg$ even \\
		\hline
		$E_\rho$ & $2.00$ & $4.00$ & $4.00$ & $6.00$ & $6.00$ & $8.06$ & $\kg+1$ & $\kg+2$ \\
		$E_{\Weingarten}$ & $0.00$ & $2.11$ & $2.01$ & $3.95$ & $4.05$ & $5.88$ & $\kg-1$ & $\kg$ \\
		$N_{\nn}$ & $1.00$ & $2.01$ & $2.99$ & $4.00$ & $5.00$ & $6.03$ & $\kg$ & $\kg$ \\
		\hline
	\end{tabular}
	\caption{Representative asymptotic experimental orders on the deformed sphere using the macro-element parametrization $\Phih$, red refinement, and lifting $\mathfrak{L}=\pi$. The rows represent the distance integral, a curvature integral, and a pointwise geometric error.}
	\label{tab:flowersurf-direct-orders-lifting-pi}
\end{table}

\subsection{Scope and summary}

The experiments in this section test the flat interpolation estimates from \cref{thm:flat-interpolation-estimate,cor:weighted-flat-interpolation-estimate,lem:improved-first-derivative}, the lifted interpolation estimates from \cref{thm:interpolation_estimate_-1,thm:interpolation_estimate_0,thm:interpolation_estimate_1st_derivatives_standard,thm:interpolation_estimate_1st_derivatives,cor:global_interpolation_estimate}, the distance estimate from \cref{cor:dist_estimate_global}, the normal-vector estimates \eqref{eq:Pnh} and \eqref{eq:n-nh}, the Weingarten estimate \eqref{eq:dn-dnh}, and the Gaussian curvature estimate \eqref{eq:Gauss-curv-estimate}. Additional numerical tests outside the scope of the main theorems are collected in \cref{sec:additional-numerical-experiments}.

Within this scope, the experiments support the main message of the analysis. On the deformed sphere surface, the integral quantities show the predicted odd-even split. Odd geometry orders show the standard rates, while even geometry orders gain one order. The approximate $L^\infty$-norms do not show this improvement, which confirms that the effect is caused by cancellation in the integrals rather than by better pointwise approximation of the geometry.

\section*{Funding}
This work was supported by the Deutsche Forschungsgemeinschaft (DFG, German Research Foundation) through FOR 3013, project TP06, project number 417223351, to H.H. and S.P., and through project number 386450667 to G.Z.

\section*{Acknowledgements}
The authors used AI-assisted tools in a limited capacity to support code development and the writing process, including editorial revisions, reformulations, and language-level cleanups of proofs. All mathematical content, numerical results, and final text were reviewed and validated by the authors, who take full responsibility for the manuscript.

\section*{Data availability}
The code and data underlying this article are available from Zenodo
\citep{Praetorius2026OddGeometriesCode}.

\section*{Conflict of interest}
None declared.

\bibliographystyle{abbrvnat}
\bibliography{references}

\appendix
\appendixpage
\addappheadtotoc
\section{Higher-order derivatives}\label{sec:appendix-higher-order-derivatives}

This appendix contains the notation, estimates, and numerical tests for lifted
interpolation estimates involving derivatives of order at least two.  We use
the geometric setting, interpolation operators, liftings, and symmetry
assumptions introduced in \cref{sec:discrete surface,sec:interpolation_errors}.
The estimates are included for completeness, while the main text only uses the
function-value and first-derivative cases.

Higher-order surface derivatives are written as iterated covariant derivatives,
\[
  \nabla^{(\kd)} f \colonequals \nabla(\nabla^{(\kd-1)} f), \qquad \nabla^{(1)} f \colonequals \nabla f.
\]
Here $\nabla$ denotes the Levi-Civita covariant derivative, extended
componentwise to tensor-valued quantities. Let $\tauB_i\in T_x\G$, $x\in\G$,
and define the lifted tangential directions
$\tauM_i \colonequals \DM\Phi^+(\tauB_i\circ\Phi)\in T_{\xM}\M$ with
$x=\Phi(\xM)$. Then

\begin{align}
  \gradG^{(\kd)} f(x)[\tauB_1,\tauB_2,\ldots,\tauB_\kd] = \gradM^{(\kd)}\bar{f}(\xM)[\tauM_1,\tauM_2,\ldots,\tauM_\kd] + \sum_{s=1}^{\kd-1} \gradM^{(s)}\bar{f}\odot A_{(\kd,s)}(\Phi,\{\tauM_i\}_i).\label{eq:high-order-derivative}
\end{align}
The first term is the highest-order chain-rule contribution. The lower-order
terms contain derivatives of $\bar f$ multiplied by derivatives of $\Phi$ and
its inverse. These factors are collected in $A_{(\kd,s)}$. In particular,
$A_{(\kd,s)}(\Phi,\{\tauM_i\}_i)$ contains up to $\kd-s+1$ derivatives of $\Phi$,
and $\odot$ denotes the corresponding tensor contraction.

For $\kd=2$, the lower-order term in \eqref{eq:high-order-derivative} is the
usual Christoffel-symbol contribution:
\[
  A_{(2,1)}(\Phi,\eM_i,\eM_j)=(-\Gamma_{ij}^k)_{k=1}^2.
\]

\subsection{Interpolation errors}

We now state the higher-order analogue of the lifted interpolation estimates.
We use the notation of higher-order derivatives from
\eqref{eq:high-order-derivative} in tangential directions $\tauB$.

The parity improvement depends on the derivative order.  For integers $\order\ge1$ and $\kd\ge0$, set
\begin{equation}\label{eq:even-odd-order-\kd}
	\widehat{\order}_{\kd} \colonequals
	\begin{cases}
		\order+2, & \text{if } \order+2-\kd \text{ is even},\\
		\order+1, & \text{if } \order+2-\kd \text{ is odd}.
	\end{cases}
\end{equation}
This is exactly the parity condition in
\cref{cor:weighted-flat-interpolation-estimate}.

\begin{theorem}\label{thm:interpolation_estimate_higher_order}
  Let $\GM,\GhM$ be parametrized over $\M\in\TriBar$.
  Assume the local hypotheses of
  \cref{sec:surface-parametrization,sec:symmetric-macro-triangulation},
  including the boundary hypothesis from
  \cref{rem:symmetric-boundary-triangulation} where first-derivative boundary
  cancellation is used.
  Let $f\in C^{\ku+2}(\GM;\R)$ be a scalar function, let
  $f_h\colonequals \II_{h,\kg}^{\ku}(f)$ be the interpolation of $f$, and let
  $g\in W^{1,1}(\GM)$.
  Fix an integer $\kd$ with $2\le \kd\le \ku$ and let
  $\bm{v}_j\in W^{\kd+1,\infty}(U_\delta(\G);\R^3)$, $j=1,\ldots,\kd$, be ambient
  direction fields.
  Then,
  \begin{multline}
    \left| \int_{\GM} \gradG^{(\kd)}f[\bm{P}\bm{v}_{1},\ldots,\bm{P}\bm{v}_{\kd}]\,g\,\dG
      - \int_{\GhM} \gradGh^{(\kd)}f_h[\bm{P}_h\bm{v}_{1},\ldots,\bm{P}_h\bm{v}_{\kd}]\,(g\circ L)\,\dGh\right| \\
    \leq C h^{\min\{\widehat{\ku}_{\kd},\widehat{\kg}_{\kd}\}-\kd}
    \|f\|_{C^{\ku+2}(\GM)}\|g\|_{W^{1,1}(\GM)}
  \end{multline}
  with $\widehat{\ku}_{\kd}$ and $\widehat{\kg}_{\kd}$ as in \eqref{eq:even-odd-order-\kd}. The constant $C$ may depend on the
  $W^{\kd+1,\infty}$-norms of the direction fields, but is independent of $h$.
\end{theorem}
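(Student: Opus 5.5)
We would follow the structure of the proofs of \cref{thm:interpolation_estimate_-1,thm:interpolation_estimate_1st_derivatives_standard}: pull both integrals back to the flat macro element $\M$, express all derivatives through \eqref{eq:high-order-derivative}, and split the difference into a pure function-interpolation part and a pure geometry part. Set $\bar f\colonequals f\circ\Phi$, $\bar f_h\colonequals\IBar_h^\ku\bar f=f_h\circ\Phih$ and $\bar g\colonequals g\circ\Phi$. Define the lifted directions $\tauM_j\colonequals(\DM\Phi)^+(\PP\bm v_j)\circ\Phi$ and their discrete analogues $\tauM_{h,j}\colonequals(\DM\Phih)^+(\PPh\bm v_j)\circ\Phih$. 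Note that $\PP\bm v_j$ and $\PPh\bm v_j$ are smooth, respectively elementwise smooth, and their difference is $\OO(h^\kg)$ by \cref{lem:surface-approximation-estimates,lem:pseudo-inverse-geometry-mapping}. With $A_{(\kd,s)}$ and $A^h_{(\kd,s)}$ the coefficient tensors built from $\Phi$ and $\Phih$, the integrand difference reads
\[
  \Big(\gradM^{(\kd)}\bar f[\tauM_1,\ldots]+\textstyle\sum_s\gradM^{(s)}\bar f\odot A_{(\kd,s)}\Big)\bar g\mG
  -\Big(\gradM^{(\kd)}\bar f_h[\tauM_{h,1},\ldots]+\textstyle\sum_s\gradM^{(s)}\bar f_h\odot A^h_{(\kd,s)}\Big)\bar g\mGh .
\]
We add and subtract the mixed term in which $\bar f_h$ is combined with the exact geometry ($\tauM_j$, $A_{(\kd,s)}$, $\mG$).

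\textbf{Function part.} This is the sum over $s\le\kd$ of $\int_\M\gradM^{(s)}(\bar f-\bar f_h)\odot W_s\,\dM$ with smooth weights $W_s$, built from $\tauM_j$, $A_{(\kd,s)}$, $\bar g$ and $\mG$, which lie in $W^{1,1}(\M)$ uniformly; this uses $\bm v_j\in W^{\kd+1,\infty}$ and $\Phi\in C^{\kg+2}$. For $s=\kd$ we apply \cref{cor:weighted-flat-interpolation-estimate} when $\ku+2-\kd$ is even and \cref{R:weighted-flat-interpolation-estimate-odd} otherwise, which gives $h^{\widehat\ku_\kd-\kd}$. The lower-order terms $s<\kd$ give at least $h^{\ku+1-s}\le h^{\ku+2-\kd}$, so they are harmless.

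\textbf{Geometry part.} Here $\bar f_h$ is fixed and the geometry changes. Replacing $\gradM^{(s)}\bar f_h$ by $\gradM^{(s)}\bar f$ costs $\OO(h^{\ku+1-s})\cdot\OO(h^{\kg+1-(\kd-s+1)})$. Each geometric difference ($\mG-\mGh$, pseudo-inverse, $\PP-\PPh$, or $A_{(\kd,s)}-A^h_{(\kd,s)}$, which involves up to $\kd-s+1$ derivatives) is bounded by \cref{lem:surface-approximation-estimates}, so the replacement cost is of order $h^{\ku+\kg+1-\kd}$, which is acceptable. After the replacement we linearize each geometric difference in $\DM^{\vec\alpha}(\Phi-\Phih)$ with $|\vec\alpha|\le\kd-s+1\le\kd$, multiplied by smooth prefactors. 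Products of two such differences are $\OO(h^{2\kg+2-\kd-1})$ and are also acceptable. The linear terms are integrals $\int_\M\DM^{\vec\alpha}(\Phi-\Phih)\,W\,\dM$ with smooth $W$, to which \cref{cor:weighted-flat-interpolation-estimate} (order $\kg$) applies, giving $h^{\kg+2-|\vec\alpha|}$ under the correct parity and $h^{\kg+1-|\vec\alpha|}$ otherwise. For $|\vec\alpha|=1$ with $\mG$ or first-order terms we can additionally use \cref{lem:improved-first-derivative} or \cref{cor:weighted-diff-of-area}.

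\textbf{Main obstacle.} The delicate step is the parity bookkeeping in the geometry part. The worst terms are the top-order ones, $|\vec\alpha|=\kd$, coming from the Christoffel-type factors in $A_{(\kd,1)}$, so the resulting rate is $h^{\widehat\kg_\kd-\kd}$, exactly as claimed. Terms with smaller $|\vec\alpha|$ but the opposite parity must also be checked. Such a term yields $h^{\kg+1-|\vec\alpha|}$ with $|\vec\alpha|\le\kd-1$, which is still at least $h^{\kg+2-\kd}\ge h^{\widehat\kg_\kd-\kd}$, so every term is dominated. We expect the careful verification that every geometric difference linearizes into such single-derivative-of-error terms with $W^{1,1}$ weights to be the most laborious part. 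This includes the discrete direction fields through $\PPh=\bm I-\nnh\otimes\nnh$ with $\nnh$ built from $\DM\Phih$. Finally, summing over symmetric pairs and boundary elements is handled inside the cited corollary, and collecting constants gives the assertion.
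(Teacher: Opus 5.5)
Your proposal is correct and follows essentially the same route as the paper. Both pull back to $\M$, split via the mixed term combining $\bar f_h$ with the exact geometry, and apply \cref{cor:weighted-flat-interpolation-estimate} (or the fallback of \cref{R:weighted-flat-interpolation-estimate-odd}) to the top-order function-interpolation term; in the geometry part both replace $\gradM^{(s)}\bar f_h$ by $\gradM^{(s)}\bar f$ and then linearize in $\DM^{\vec\alpha}(\Phi-\Phih)$, using the parity-dependent weighted estimates. Your order bookkeeping is, if anything, more explicit than the paper's, since it isolates the $|\vec\alpha|=\kd$ terms from $A_{(\kd,1)}$ as the only geometric contributions where the cancellation actually matters.
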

\begin{proof}
	We denote by $\tauB_j\colonequals\bm{P}\bm{v}_j$, and $\tauB_{h,j}\colonequals\bm{P}_h\bm{v}_j$ tangential directions on $\GM$ and $\GhM$, respectively, and define $\tauM_j\colonequals \DM\Phi^+(\tauB_j\circ\Phi)$ and $\tauM_{h,j}\colonequals (\DM\Phih)^+(\tauB_{h,j}\circ\Phih)$, $j=1,\ldots,\kd$, as the associated tangential vectors in $\M$.
	For simplicity we denote by $\tauB_{\cdots} \colonequals(\tauB_1,\ldots,\tauB_\kd)$ the sequence of tangential vectors.

	We estimate similar to the proof of \cref{thm:interpolation_estimate_1st_derivatives_standard},
	\begin{align} \label{eq:high-order-derivatives-on-M}
		\Big| \int_{\GM} \gradG^{(\kd)} f[\tauB_{\cdots}]\, g \,\dG - \int_{\GhM} \gradGh^{(\kd)}f_h[\tauB_{h\cdots}]\, (g\circ L)\,\dGh \Big|
			\leq
			\left| \int_{\M} \gradG^{(\kd)} (f-f_h\circ L^{-1})[\tauB_{\cdots}]\circ \Phi\, \bar{g}\mG \,\dM \right|\phantom{.}\\
			+ \left| \int_{\M} \left(\gradG^{(\kd)} (f_h \circ L^{-1})[\tauB_{\cdots}]\circ \Phi \, \mG - \gradGh^{(\kd)} f_h[\tauB_{h\cdots}]\circ \Phih\mGh\right)\bar{g}\,\dM \right|.\notag
	\end{align}
	We write for $\bar{f}\colonequals f\circ\Phi$ and $\bar{f}_h\colonequals\IBar_h^\ku(\bar{f})$
	\begin{align*}
		\gradG^{(\kd)}(f-f_h\circ L^{-1})[\tauB_{\cdots}] \circ \Phi & = \gradM^{(\kd)}(\bar{f} - \bar{f}_h)[\underbrace{\DM\Phi^+(\tauB\circ\Phi)_{\cdots}}_{\tauM_{\cdots}}] + \mathcal{R}(\bar{f} - \bar{f}_h, \Phi)
	\end{align*}
	with a remainder $\mathcal{R}$ depending on up to $\kd-1$ derivatives of $\bar{f} - \bar{f}_h$ and $\kd$ derivatives of $\Phi$.

	The assumption $\bm{v}_j\in W^{\kd+1,\infty}(U_\delta(\G),\R^3)$ ensures that every direction-dependent coefficient of $\bar g\colonequals g\circ \Phi\in W^{1,1}(\M)$ produced by \eqref{eq:high-order-derivative} yields a weight function $\omega(\tauM_{\cdots})$ that can be estimated in the $W^{1,1}(\M)$-norm by $\| g\|_{W^{1,1}(\GM)}$. Thus, by using \cref{cor:weighted-flat-interpolation-estimate} or standard estimates, we obtain
	\begin{align}
		\left| \int_{\M} \gradM^{(\kd)}(\bar{f} - \bar{f}_h)[\tauM_{\cdots}]\,\bar{g}\mG \,\dM \right|
		\leq C h^{\widehat{\ku}_{\kd}-\kd}\|f\|_{C^{\ku+2}(\GM)}\| g\|_{W^{1,1}(\GM)}.
	\end{align}
	The remainder has the form
	\[
		\mathcal{R}(\bar{f} - \bar{f}_h, \Phi) = \sum_{s=1}^{\kd-1}\gradM^{(s)}(\bar{f} - \bar{f}_h)[\tauM^{(s)}_{\cdots}]
	\]
	where the directions $\tauM^{(s)}_{\cdots}$ depend on $\tauM_{\cdots}$ and up to $\kd-s+1$ derivatives of $\Phi$ and $\Phi^{+}$. In particular a weight function $\omega^{(s)}(\tauM^{(s)}_{\cdots},\bar{g},\mG)$ composed of products of components of its arguments has regularity $\omega^{(s)}\bar{g}\mG\in W^{1,1}(\M)$. By \cref{cor:weighted-flat-interpolation-estimate}, \cref{lem:improved-first-derivative}, or standard estimates, we find that each addend of the remainder is of order $h^{\widehat{\ku}_{s}-s}$. Since $s\leq \kd-1$, this is at least of order $h^{\widehat{\ku}_{\kd}-\kd}$, and hence
	\[
	\left|\int_{\M}\mathcal{R}(\bar{f} - \bar{f}_h, \Phi)\bar{g}\mG \,\dM\right| \leq C h^{\widehat{\ku}_{\kd}-\kd}\|f\|_{C^{\ku+2}(\GM)}\| g\|_{W^{1,1}(\GM)}.
	\]

	For the second term in \eqref{eq:high-order-derivatives-on-M}, we express the derivatives on $\M$,
	\begin{align*}
		\gradG^{(\kd)} (f_h \circ L^{-1})[\tauB_{\cdots}]\circ \Phi &= \gradM^{(\kd)}\bar{f}_h[\underbrace{\DM\Phi^+(\bm{P}\bm{v}\circ\Phi)_{\cdots}}_{\tauM_{\cdots}}] + \mathcal{R}(\bar{f}_h,\Phi) \\
	\intertext{and}
		\gradGh^{(\kd)} f_h[\tauB_{h\cdots}]\circ \Phih
		&= \gradM^{(\kd)}\bar{f}_h[\underbrace{(\DM\Phih)^+(\bm{P}_h\bm{v}\circ\Phih)_{\cdots}}_{\tauM_{h\cdots}}]  + \mathcal{R}(\bar{f}_h,\Phih).
	\end{align*}
	For the difference we add and subtract zeros to obtain due to the multi-linearity of the derivative
	\begin{multline*}
		\int_{\M} \left(\gradG^{(\kd)} (f_h \circ L^{-1})[\tauB_{\cdots}]\circ \Phi \, \mG
		 - \gradGh^{(\kd)} f_h[\tauB_{h\cdots}]\circ \Phih\, \mGh \right) \bar{g}\,\dM\\
		\begin{aligned}
		  &= \int_{\M} \gradM^{(\kd)}\bar{f}_h[\tauM_{\cdots}]\,\bar{g}\mG \,\dM - \int_{\M} \gradM^{(\kd)}\bar{f}_h[\tauM_{h\cdots}] \, \bar{g}\mGh \,\dM
		   + \int_{\M} \mathcal{R}(\bar{f}_h,\Phi)\bar{g}\mG \,\dM - \int_{\M} \mathcal{R}(\bar{f}_h,\Phih) \bar{g}\mGh \,\dM\\
		  &= \int_{\M} \left(\gradM^{(\kd)}\bar{f}[\tauM_{\cdots}]+\mathcal{R}(\bar{f},\Phi)\right)\bar{g}(\mG -\mGh) \,\dM\\
		  &\phantom{=} +\int_{\M} \left(\left(\gradM^{(\kd)}\bar{f}_h-\gradM^{(\kd)}\bar{f}\right) [\tauM_{\cdots}]+\mathcal{R}(\bar{f}_h-\bar{f},\Phi)\right)\bar{g}(\mG -\mGh) \,\dM\\
		  &\phantom{=} + \int_{\M} \gradM^{(\kd)}\bar{f}_h[(\tauM-\tauM_h)_{\cdots}]\,\bar{g}\mGh\,\dM
		   + \int_{\M} (\mathcal{R}(\bar{f}_h,\Phi)-\mathcal{R}(\bar{f}_h,\Phih))\bar{g}\mGh \,\dM.
		\end{aligned}
	\end{multline*}
	Note that the differences always have the structure of first-order derivatives of interpolation errors, or products of interpolation errors.

	The first term can be estimated by \cref{cor:weighted-diff-of-area}. The second term produces by standard interpolation error estimates a term of order $h^{\ku+1-\kd+\kg}$. For the third term, the multilinearity of $\gradM^{(\kd)}\bar f_h$ produces terms with one or more direction differences. For each direction,
	\begin{align*}
		\tauM_j-\tauM_{h,j} &= \DM\Phi^+(\bm{P}\bm{v}_j\circ\Phi) - (\DM\Phih)^+(\bm{P}_h\bm{v}_j\circ\Phih) \\
		&= (\DM\Phi^+ - (\DM\Phih)^+)(\bm{P}\bm{v}_j\circ\Phi) + \DM\Phi^+(\bm{P}\bm{v}_j\circ\Phi - \bm{P}_h\bm{v}_j\circ\Phih)\\
		&\quad - (\DM\Phi^+ - (\DM\Phih)^+)(\bm{P}\bm{v}_j\circ\Phi - \bm{P}_h\bm{v}_j\circ\Phih).
	\end{align*}
	Terms containing two or more such differences are estimated directly with $L^\infty$ geometry bounds. Terms containing exactly one difference are reduced, by adding and subtracting the smooth counterparts of the remaining factors, to the weighted flat interpolation estimates for first derivatives of $\Phi-\Phih$, the pseudo-inverse estimate \cref{lem:pseudo-inverse-geometry-mapping}, and the projection estimate $\|\bm{P}-\bm{P}_h\|_{L^\infty}\leq Ch^\kg$.
	Finally, the difference $\mathcal{R}(\bar f_h,\Phi)-\mathcal{R}(\bar f_h,\Phih)$ contains the additional higher-order terms. These are derivatives of the geometric interpolation error up to order $\kd$, multiplied by smooth coefficients coming from the chain rule. They are again estimated componentwise by \cref{cor:weighted-flat-interpolation-estimate}, \cref{lem:improved-first-derivative}, or the standard estimate, depending on derivative order and parity. This yields the asserted order $h^{\min\{\widehat{\ku}_{\kd},\widehat{\kg}_{\kd}\}-\kd}$.
\end{proof}

\begin{corollary}\label{cor:global_interpolation_estimate_higher_order}
	Let $\Phi\colon\GBar\to\G$ be continuous and patchwise sufficiently smooth, and let $\Gh$ be parametrized elementwise over $\T\in\TriBar_h$. Let $f\in C^{\ku+2}\big(\G;\R\big)$ be a scalar function and let $f_h \colonequals \II_{h,\kg}^\ku f\in C^0\big(\Gh;\R\big)$ be the interpolation of $f$. Assume that the local hypotheses of \cref{thm:interpolation_estimate_higher_order} hold on every macro element. Fix an integer $\kd$ with $2\le \kd\le \ku$, let $g\in W^{1,1}(\G)$, and let $\bm{v}_j\in W^{\kd+1,\infty}(U_\delta(\G);\R^3)$, $j=1,\ldots,\kd$, be ambient direction fields. Then, with $\widehat{\ku}_{\kd}$ and $\widehat{\kg}_{\kd}$ as in \eqref{eq:even-odd-order-\kd},
  \begin{align}
		\Big|
			\int_{\G} \gradG^{(\kd)}f[\bm{P}\bm{v}_{1},\ldots,\bm{P}\bm{v}_{\kd}]\,g\,\dG
			- \int_{\Gh} \gradGh^{(\kd)}f_h[\bm{P}_h\bm{v}_{1},&\ldots,\bm{P}_h\bm{v}_{\kd}]\,(g\circ L)\,\dGh
		\Big| \notag\\
		&\leq Ch^{\min\{\widehat{\ku}_{\kd},\widehat{\kg}_{\kd}\}-\kd}
		\|f\|_{C^{\ku+2}(\G)}\|g\|_{W^{1,1}(\G)}.
		\label{eq:global-interpolation-higher-derivatives}
	\end{align}
	The constant $C$ may depend on the macro parametrizations and on the $W^{\kd+1,\infty}$-norms of the direction fields, but it is independent of $h$.
\end{corollary}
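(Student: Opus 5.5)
The plan is to derive the global estimate by summing the local estimate of \cref{thm:interpolation_estimate_higher_order} over the fixed, finite set of macro elements $\M\in\TriBar$, in the same way as \cref{cor:global_interpolation_estimate} was obtained from the local first-derivative theorems.

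\textbf{Step 1: split the global integrals into macro-patch integrals.} Because $\Phi$ is continuous and the Lagrange nodes on common macro edges coincide, both $\G=\bigcup_{\M}\GM$ and $\Gh=\bigcup_{\M}\GhM$ are unions of macro patches that overlap only along codimension-one interfaces. These interfaces have measure zero. The lifting $L$ maps $\GhM$ onto $\GM$ patchwise. Hence the difference on the left of \eqref{eq:global-interpolation-higher-derivatives} equals the sum over $\M$ of the local differences in \cref{thm:interpolation_estimate_higher_order}. In the discrete integrand, all derivatives and the projection $\PPh$ are elementwise quantities, so no interface terms appear.

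\textbf{Step 2: apply the local theorem on every patch.} On each $\M$ the hypotheses of \cref{thm:interpolation_estimate_higher_order} hold by assumption. The restriction $f|_{\GM}$ lies in $C^{\ku+2}(\GM)$ with $\|f\|_{C^{\ku+2}(\GM)}\le\|f\|_{C^{\ku+2}(\G)}$. The restriction $g|_{\GM}$ lies in $W^{1,1}(\GM)$. The ambient direction fields $\bm v_j$ are the same global fields on every patch. The local estimate therefore bounds each summand by
\[
C_\M h^{\min\{\widehat{\ku}_{\kd},\widehat{\kg}_{\kd}\}-\kd}\|f\|_{C^{\ku+2}(\G)}\|g\|_{W^{1,1}(\GM)},
\]
where $C_\M$ depends on $\PhiM$ and on the $W^{\kd+1,\infty}$-norms of the $\bm v_j$.

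\textbf{Step 3: collect the constants.} Replace each $C_\M$ by their maximum over the finitely many macro elements. Then use $\sum_{\M}\|g\|_{W^{1,1}(\GM)}=\|g\|_{W^{1,1}(\G)}$, which holds because the patches $\GM$ partition $\G$ up to null sets. This gives the asserted bound.

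The main point to check is Step 1. One must confirm that the local theorem is stated with the lifting $L$ on exactly the patch $\GM$. This is the reason $L$, and not $\pi$, appears in the statement: no overlap argument is needed. One must also confirm that the tangential directions $\bm P\bm v_j$ and $\PPh\bm v_j$ are defined pointwise, so that splitting the integrands across patches is legitimate.
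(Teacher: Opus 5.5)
Your proposal is correct and is essentially the paper's own argument: the paper proves this corollary by summing \cref{thm:interpolation_estimate_higher_order} over the finitely many macro patches, exactly as in the proof of \cref{cor:global_interpolation_estimate}. That is, it splits the integrals along the patch decomposition $\GM=L(\GhM)$ up to codimension-one interfaces, takes the maximum of the local constants, and bounds the sum of local norms by the global ones; your remark that the lifting $L$ avoids the finite-overlap argument needed for $\pi$ is consistent with the paper.
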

\begin{proof}
	This follows by summing \cref{thm:interpolation_estimate_higher_order} over the finitely many macro elements, exactly as in the proof of \cref{cor:global_interpolation_estimate}.
\end{proof}

\subsection{Numerical experiments}\label{sec:numerical-interpolation-estimates}

The flat macro-element interpolation experiments and the lifted surface tests
for $\kd=0$ and $\kd=1$ are reported in \cref{sec:numerical-experiments}. Here we
collect the higher-order lifted surface tests for $\kd=2$ and $\kd=3$, which
correspond to the estimates above. As in the lower-order tests, we compare the
tensor-valued integrals of $\D^{(\kd)} f$ over $\G$ and $\D_h^{(\kd)} f_h$ over $\Gh$, and
take the Frobenius norm after integration. This is a finite sum of scalar
directional estimates and therefore has the same expected order.

\begin{table}[ht]
	\centering
	\setlength{\tabcolsep}{5pt}
	\begin{subtable}{0.48\textwidth}
		\centering
		\begin{tabular}{c|rrrrrr}
			\diagbox[width=3.8em]{$\ku$}{$\kg$} & $1$ & $2$ & $3$ & $4$ & $5$ & $6$\\
			\hline
			$1$ & $0.0$ & $0.0$ & $0.0$ & $0.0$ & $0.0$ & $0.0$\\
			$2$ & $-0.0$ & $2.0$ & $2.0$ & $2.0$ & $2.0$ & $2.0$\\
			$3$ & $0.0$ & $2.0$ & $2.0$ & $2.0$ & $2.0$ & $2.0$\\
			$4$ & $0.0$ & $2.0$ & $2.0$ & $4.0$ & $4.0$ & $4.0$\\
			$5$ & $0.0$ & $2.0$ & $1.9$ & $4.0$ & $4.0$ & $5.4$\\
			$6$ & $0.0$ & $2.0$ & $1.9$ & $4.0$ & $4.0$ & $6.0$\\
		\end{tabular}
		\caption{$\kd=2$}
	\end{subtable}
	\hfill
	\begin{subtable}{0.48\textwidth}
		\centering
		\begin{tabular}{c|rrrrrr}
			\diagbox[width=3.8em]{$\ku$}{$\kg$} & $1$ & $2$ & $3$ & $4$ & $5$ & $6$\\
			\hline
			$1$ & $0.0$ & $0.0$ & $0.0$ & $0.0$ & $0.0$ & $0.0$\\
			$2$ & $0.0$ & $0.0$ & $0.0$ & $0.0$ & $0.0$ & $0.0$\\
			$3$ & $0.0$ & $0.0$ & $2.0$ & $2.0$ & $2.0$ & $2.0$\\
			$4$ & $0.0$ & $0.0$ & $2.0$ & $2.0$ & $2.0$ & $2.0$\\
			$5$ & $0.0$ & $0.0$ & $2.0$ & $2.0$ & $4.0$ & $4.1$\\
			$6$ & $0.0$ & $0.0$ & $2.0$ & $2.0$ & $4.0$ & $4.1$\\
		\end{tabular}
		\caption{$\kd=3$}
	\end{subtable}
	\caption{Experimental orders of convergence for lifted interpolation integrals with $\kd=2$ and $\kd=3$ on the deformed sphere under red refinement. For each derivative order $\kd$, rows give $\ku$ and columns give $\kg$. Compare \cref{cor:global_interpolation_estimate_higher_order}.}
	\label{tab:interpolation-error-orders-high}
\end{table}

\section{Additional numerical experiments}\label{sec:additional-numerical-experiments}

This appendix contains numerical results for two settings that are not covered
directly by the main theorems. The first setting is projected refinement, a
standard construction in surface finite element computations that differs from
the direct macro-element parametrization used in the analysis. The second
setting is newest-vertex bisection, where the refinement pattern is not covered
by the symmetric red-refinement argument.

\subsection{Projected refinement}

Starting from a piecewise linear interpolating surface, newly generated vertices during refinement are projected to the smooth surface. The higher order geometry is then described by an interpolation of the closest-point projection $\pi$ over the elements of this piecewise linear surface \citep[cf.][]{Hei2004Isoparametric,Demlow2009Higher}. This construction differs from the direct macro-element parametrization used in the analysis above. In particular, the composition of a piecewise affine parametrization with the closest-point projection is only piecewise smooth and is therefore not directly covered by the assumptions of \cref{sec:interpolation_errors}.

Projected refinements are standard in higher-order surface finite element methods, and the corresponding geometry approximation estimates are classical \citep[see][]{Hei2004Isoparametric,Demlow2009Higher}. For the quantities considered here, however, the improved integral orders come from the symmetry cancellation studied in the preceding sections. The numerical results in \cref{tab:flowersurf-projected-integral-orders} indicate that this cancellation is still present when the projected refinement is generated from a structured refinement of a coarse reference grid.

\begin{table}[ht]
	\centering
	\begin{tabular}{lrrrrrr}
		\hline
		Quantity & $\kg=1$ & $\kg=2$ & $\kg=3$ & $\kg=4$ & $\kg=5$ & $\kg=6$ \\
		\hline
		$E_{\PPh\nn}$ & $2.00$ & $4.00$ & $4.01$ & $6.01$ & $6.00$ & $8.04$ \\
		$E_{\nn-\nnh}$ & $1.97$ & $4.00$ & $4.01$ & $6.01$ & $6.00$ & $8.04$ \\
		$E_{\Weingarten}$ & $0.00$ & $2.00$ & $2.01$ & $4.01$ & $4.01$ & $6.06$ \\
		$E_{\GaussCurvature}$ & $0.00$ & $2.00$ & $2.00$ & $4.00$ & $4.00$ & $6.03$ \\
		$E_{\GaussCurvature,\PPh}$ & $0.01$ & $2.00$ & $2.00$ & $4.00$ & $4.00$ & $6.02$ \\
		\hline
	\end{tabular}
	\caption{Experimental orders of convergence for projected refinement on the deformed sphere surface with red refinement. The $\kg=6$ values are taken from the last stable refinement level.}
	\label{tab:flowersurf-projected-integral-orders}
\end{table}

The approximate $L^\infty$-norms on the deformed sphere surface in \cref{tab:flowersurf-projected-norm-orders} follow the standard pointwise rates. Thus, also for projected refinements, the improved rates appear in the integral quantities and not in the pointwise geometric errors.

\begin{table}[ht]
	\centering
	\begin{tabular}{lrrrrrr}
		\hline
		Quantity & $\kg=1$ & $\kg=2$ & $\kg=3$ & $\kg=4$ & $\kg=5$ & $\kg=6$ \\
		\hline
		$N_\rho$ & $2.00$ & $2.97$ & $3.97$ & $4.93$ & $5.98$ & $6.80$ \\
		$N_{\nn}$ & $0.98$ & $2.00$ & $3.01$ & $4.00$ & $5.00$ & $6.02$ \\
		$N_{\Weingarten}$ & $0.00$ & $1.01$ & $2.01$ & $3.01$ & $4.00$ & $5.03$ \\
		$N_{\GaussCurvature}$ & $0.00$ & $1.01$ & $1.98$ & $3.01$ & $4.01$ & $5.05$ \\
		\hline
	\end{tabular}
	\caption{Experimental orders of convergence for approximate $L^\infty$-norms on projected refinements of the deformed sphere surface.}
	\label{tab:flowersurf-projected-norm-orders}
\end{table}

The projected construction is not identical to the interpolation of a globally smooth macro-element parametrization over the flat reference domain. The arguments of the preceding sections would apply if there were smooth parametrizations over the reference patches $\M$ whose Lagrange interpolation preserves the projected Lagrange nodes and the associated discrete function spaces. Such parametrizations need not coincide with the piecewise affine parametrization followed by closest-point projection. Their existence is not shown here, but the computations are consistent with this interpretation for projected refinements generated from structured refinements of a coarse reference grid.

\subsection{Newest-vertex bisection}

The proof of the improved estimates uses the patchwise symmetry of the refined
triangulation. Red refinement has this structure in a transparent way: inside
each macro element the refined triangles can be grouped into symmetric pairs,
while unmatched triangles occur only at macro-element boundaries.

Newest-vertex bisection, as generated by \textsc{Dune-Alugrid}, is visually more subtle. The refinement closure near vertices where several macro elements meet creates regions with a different refinement level, see \cref{fig:surface_nvb_refinement}. These regions still appear to contain local symmetric pairs of triangles, but the boundary between paired and unpaired regions is no longer confined to the macro-element boundary in the same simple way as for red refinement.

\begin{figure}[ht]
    \begin{subfigure}{0.2\linewidth}
        \includegraphics[width=.9\textwidth]{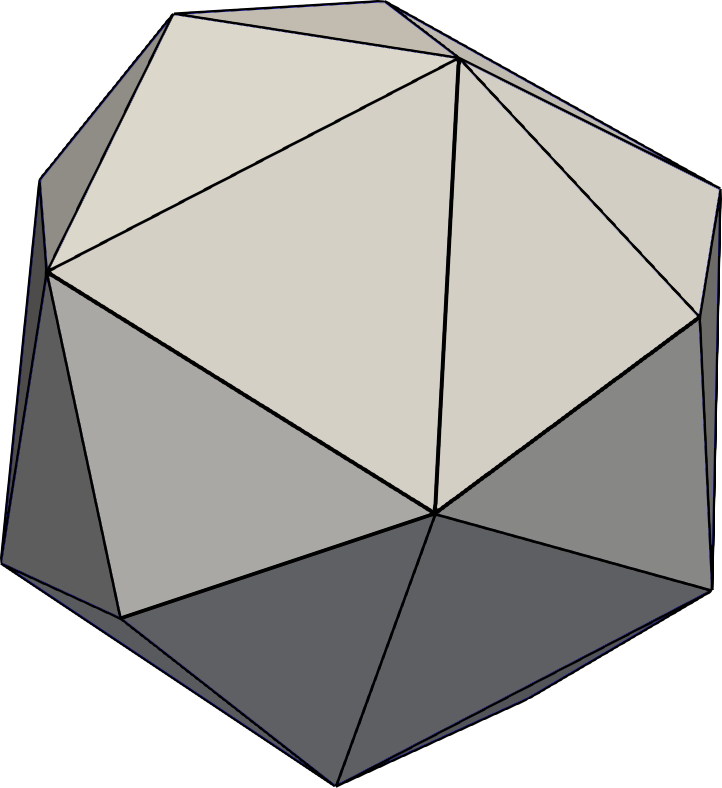}%
    \end{subfigure}\hfill%
    \begin{subfigure}{0.2\linewidth}
        \includegraphics[width=.9\textwidth]{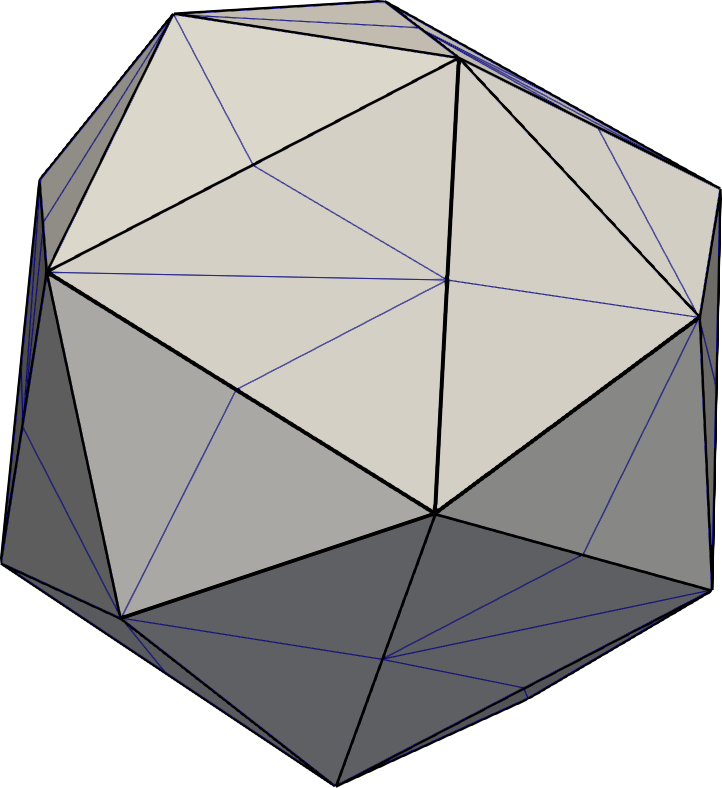}%
    \end{subfigure}\hfill%
    \begin{subfigure}{0.2\linewidth}
        \includegraphics[width=.9\textwidth]{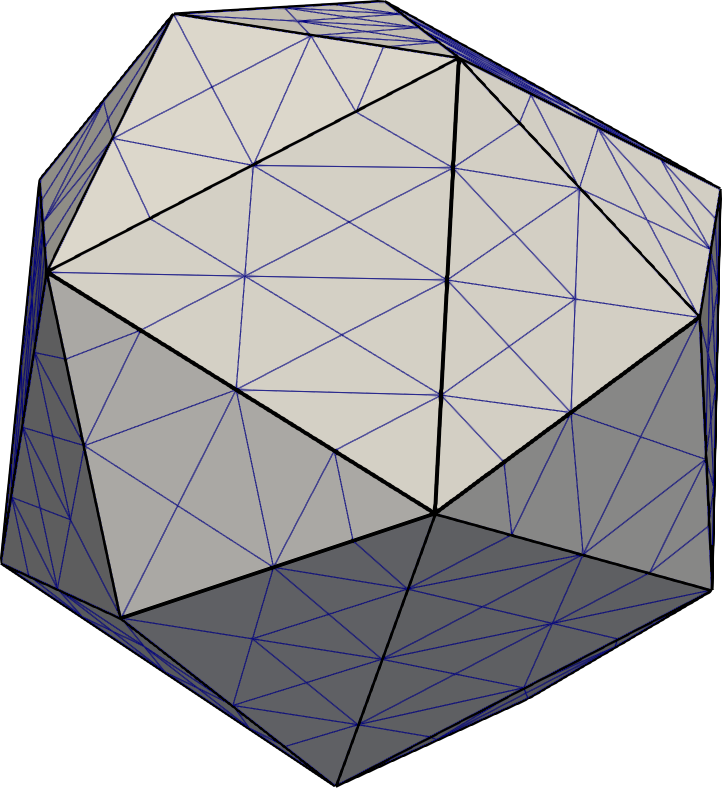}%
    \end{subfigure}\hfill%
    \begin{subfigure}{0.2\linewidth}
        \includegraphics[width=.9\textwidth]{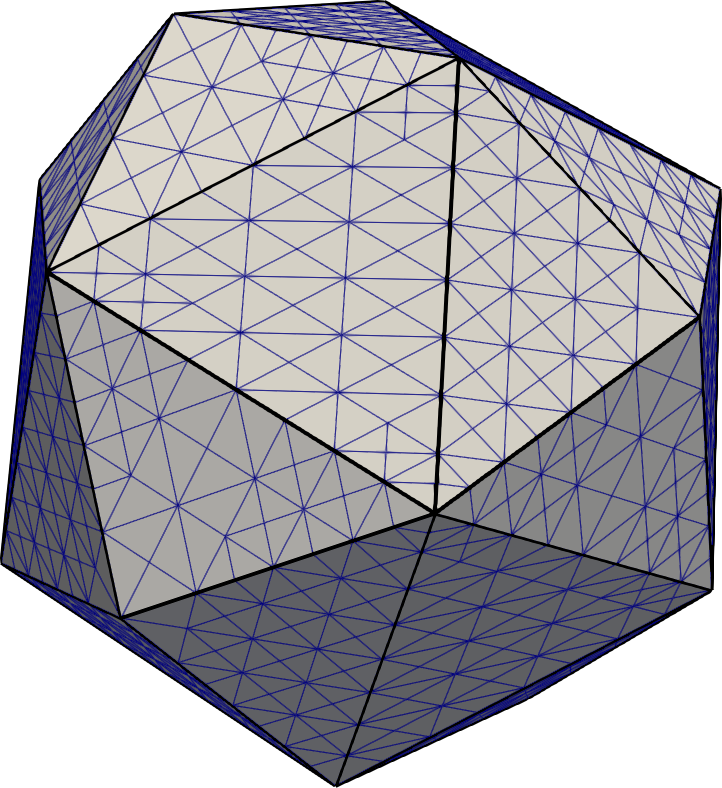}%
    \end{subfigure}\hfill%
    \begin{subfigure}{0.2\linewidth}
        \includegraphics[width=.9\textwidth]{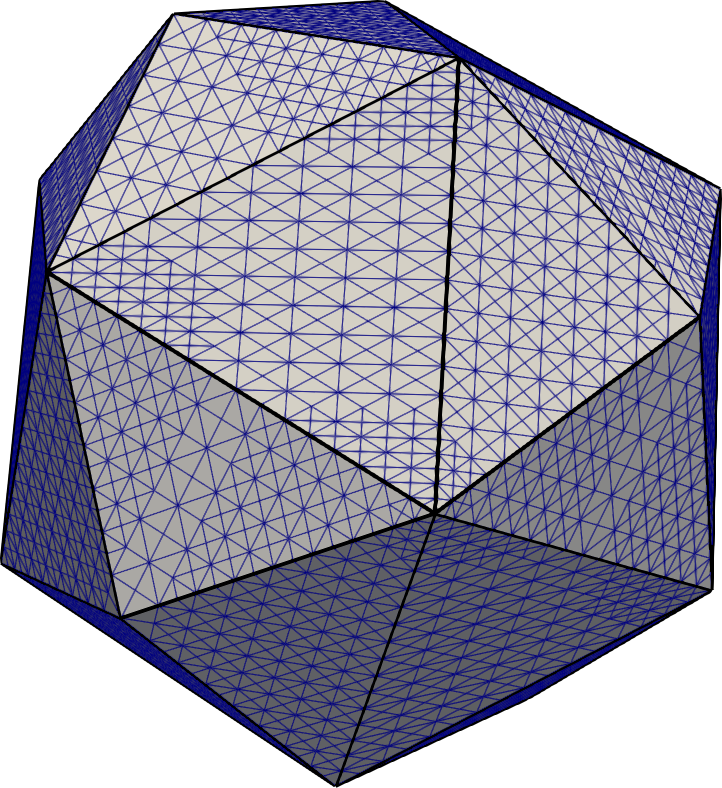}%
    \end{subfigure}
    \caption{Newest-vertex bisection refinement of a triangular reference surface $\GBar$. The refinement closure creates local regions with different refinement levels near macro-element vertices, so the symmetric-pair structure is less visible than for red refinement.}\label{fig:surface_nvb_refinement}
    \figurealt{From left to right, the coarse triangulated sphere and four progressively refined meshes generated by newest-vertex bisection. Fine triangles become increasingly dense, while additional closure refinements remain visible near vertices and extend into the interiors of coarse triangles.}
\end{figure}

The numerical evidence nevertheless shows the same odd-even behaviour for both refinement strategies. \cref{tab:flowersurf-refinement-pattern-orders} compares representative integral quantities on the deformed sphere surface for red refinement and newest-vertex bisection. For $\kg=6$, the values are taken from the last stable refinement step, since the finest level is affected by roundoff in some integral quantities.

\begin{table}[ht]
	\centering
	\begin{tabular}{llrrrrrr}
		\hline
		Refinement & Quantity & $\kg=1$ & $\kg=2$ & $\kg=3$ & $\kg=4$ & $\kg=5$ & $\kg=6$ \\
		\hline
		red & $E_{\PPh\nn}$ & $2.00$ & $3.99$ & $4.00$ & $6.00$ & $5.99$ & $7.98$ \\
		NVB & $E_{\PPh\nn}$ & $2.01$ & $3.93$ & $4.05$ & $6.06$ & $6.05$ & $8.08$ \\
		red & $E_{\Weingarten}$ & $0.00$ & $2.11$ & $2.01$ & $3.95$ & $4.05$ & $5.88$ \\
		NVB & $E_{\Weingarten}$ & $0.00$ & $2.02$ & $2.06$ & $3.89$ & $4.08$ & $4.36$ \\
		red & $E_{\GaussCurvature}$ & $0.00$ & $2.00$ & $2.00$ & $4.00$ & $4.00$ & $5.99$ \\
		NVB & $E_{\GaussCurvature}$ & $0.00$ & $2.02$ & $2.02$ & $4.04$ & $4.04$ & $6.06$ \\
		\hline
	\end{tabular}
	\caption{Experimental orders of convergence for representative integral quantities on the deformed sphere surface. The table compares red refinement (red) with newest-vertex bisection (NVB) on the same surface.}
	\label{tab:flowersurf-refinement-pattern-orders}
\end{table}

These data suggest that newest-vertex bisection preserves enough cancellation structure to obtain the same improved integral estimates. A precise characterization of the necessary symmetric-pair structure for newest-vertex bisection, and more generally for bisection-type refinement strategies with closure, is not part of the present analysis and remains open.

\end{document}